\newcommand{\rmnum}[1]{\romannumeral #1}
\newcommand{\Rmnum}[1]{\expandafter\@slowromancap\romannumeral #1@}
\newcommand{\eps}{\epsilon}
\newcommand{\pt}{\partial}
\newcommand{\vf}{\varphi}
\newcommand{\ve}{\varepsilon}
\newcommand{\ad}{\operatorname{ad}}
\newcommand{\Hom}{\operatorname{Hom}}
\newcommand{\htt}{\mathrm{ht}}
\newcommand{\Ree}{\mathrm{Re}}
\newcommand{\Imm}{\mathrm{Im}}
\newcommand{\ra}{\rightarrow}
\newcommand{\pf}{\begin{proof}}
\newcommand{\epf}{\end{proof}}
\newcommand{\eq}{\begin{equation}}
\newcommand{\eeq}{\end{equation}}
\newcommand{\eqn}{\begin{equation*}}
\newcommand{\eeqn}{\end{equation*}}
\newcommand{\frb}{\mathfrak{b}}
\newcommand{\frg}{\mathfrak{g}}
\newcommand{\frh}{\mathfrak{h}}
\newcommand{\frn}{\mathfrak{n}}
\newcommand{\frsl}{\mathfrak{sl}}
\newcommand{\frsp}{\mathfrak{sp}}
\newcommand{\bbC}{\mathbb{C}}
\newcommand{\bbD}{\mathbb{D}}
\newcommand{\bbN}{\mathbb{N}}
\newcommand{\bbR}{\mathbb{R}}
\newcommand{\bbZ}{\mathbb{Z}}
\newcommand{\caA}{\mathcal{A}}
\newcommand{\caO}{\mathcal{O}}
\newtheorem{theorem}[equation]{Theorem}
\newtheorem{prop}[equation]{Proposition}
\newtheorem{lemma}[equation]{Lemma}
\theoremstyle{remark}
\newtheorem{remark}[equation]{Remark}
\theoremstyle{definition}
\newtheorem{definition}[equation]{Definition}
\newtheorem{example}[equation]{Example}
\numberwithin{equation}{section} \setcounter{secnumdepth}{1}
\begin{document}

\title[Differential operators and Singular vectors]{Differential-operator representations of Weyl group and singular vectors in Verma modules}
\author{Wei Xiao}
\thanks{This work is supported by NSFC Grant No. 11326059.}
\address{College of Mathematics and Statistics, Shenzhen University,
Shenzhen, 518060, Guangdong, China}
\email{xiaow@szu.edu.cn}

\subjclass[2010]{17B10, 17B20, 22E47}

\keywords{Verma module; Singular vector; Differential equation; Differential operator; Weyl group}


\bigskip

\begin{abstract}
Given a suitable ordering of the positive root system associated with a semisimple Lie algebra, there exists a natural correspondence between Verma modules and related polynomial algebras. With this, the Lie algebra action on a Verma module can be interpreted as a differential operator action on polynomials, and thus on the corresponding truncated formal power series. We prove that the space of truncated formal power series is a differential-operator representation of the Weyl group $W$. We also introduce a system of partial differential equations to investigate singular vectors in the Verma module. It is shown that the solution space of the system in the space of truncated formal power series is the span of $\{w(1)\ |\ w\in W\}$. Those $w(1)$ that are polynomials correspond to singular vectors in the Verma module. This elementary approach by partial differential equations also gives a new proof of the well-known BGG-Verma Theorem.
\end{abstract}

\maketitle

%
%
\section{Introduction}
%
%
The most elementary infinite dimensional modules in the category $\caO$ introduced by Bernstein, Gelfand and Gelfand (abbreviated as BGG in what follows) are Verma modules \cite{BGG2}. Given a Borel subalgebra $\frb$ and a Cartan subalgebra $\frh\subset\frb$ of the semisimple Lie algebra $\frg$, the Verma module of weight $\lambda\in\frh^*$ is the induced module $M_\lambda:=U(\frg)\otimes_{U(\frb)}\bbC v_\lambda$, where $v_\lambda$ is a highest weight vector of weight $\lambda$. Research into Verma modules was originated in Verma's 1966 thesis \cite{V}, in which he gave a sufficient condition for the existence of nontrivial Hom spaces between Verma modules. The necessity of this condition was proved by BGG, who also obtained a new argument of the sufficiency \cite{BGG1}. Another approach to the necessity by Jantzen filtration and contravariant forms is also of independent interest \cite{J}.

One basic problem in this direction is to explicitly construct such a homomorphism if it exists. Any homomorphism between Verma modules is precisely determined by a weight vector called singular vector. Such a vector in a Verma module is characterized by the condition that it can be annihilated by the nilpotent radical $\frn$ of $\frb$ so that $\frb=\frh\oplus\frn$. In \cite{S}, Shapovalov introduced elements in $U(\frg)$ to find the determinant of his contravariant form. The Shapovalov elements can also be applied to generate singular vectors in Verma modules, although it seems laborious to explicitly give these elements in practice. Following Shapovalov's work, Lutsyuk derived a recurrence relation for obtaining singular vectors \cite{L}. After that, Malikov, Feigin and Fuks \cite{MFF} found formulas of singular vectors for $\frsl(n,\bbC)$ by considering products of complex powers of negative simple root vectors. But in general, it is difficult to verify that these products are well-defined.

Xu \cite{Xu1} overcame this difficulty for $\frsl(n,\bbC)$ by investigating the relation between singular vectors and partial differential equations. With an identification between a Verma module and its corresponding polynomial algebra, the action of $\frsl(n,\bbC)$ on the Verma module becomes a differential-operator action on the polynomial algebra. Xu constructed a system of second-order linear partial differential equations and pointed out that any singular vector in the Verma module coincides with a polynomial solution of the system. In order to solve the system, he considered the differential-operator action of $\frsl(n, \bbC)$ on a larger space of certain truncated formal power series, on which the arbitrary complex powers of negative simple root vectors are well-defined. It turns out that the space of truncated formal power series is a differential-operator representation of the symmetric group $S_n$, and the solution space of the differential system is the span of $\{\sigma(1)\ |\ \sigma\in S_n\}$.

It is reasonable to ask whether this is true in general, that is, when $\frsl(n,\bbC)$ is replaced by any other semisimple Lie algebra $\frg$. Xu himself treated the case of $\frsp(2n)$ in \cite{Xu2}. The complex powers of negative simple root vectors are well-defined and a system of partial differential equations can also be constructed to find singular vectors. However, the system seems to be quite difficult to solve even for small $n$. Only the solutions associated with $n=2$ have been fully determined. The arguments in \cite{Xu1} which depend on the ``good" properties of $\frsl(n,\bbC)$ does not work in general, we need to find another approach.

Let $\Phi\supset\Phi^+\supset\Delta$ be the root system of $(\frg,\frh)$ with the positive root system $\Phi^+$ corresponding to $\frb$ and the subset $\Delta$ of simple roots. The Weyl group $W$ is generated by all reflections $s_\beta$ with $\beta\in\Phi$. The dot action of $W$ on $\frh^*$ is defined by $w\cdot\lambda:=w(\lambda+\rho)-\rho$ for $w\in W$ and $\lambda\in\frh^*$. Here $\rho$ is the half sum of positive roots. Denote by $\bar\frn$ the dual space of $\frn$, such that $\frg=\bar\frn\oplus\frb$. Let $\{E_\beta,\beta\in\Phi;\ H_\alpha,\alpha\in\Delta\}$ be a Chevalley basis of $\frg$ (see \cite{H1}, Theorem 25.2). Then any ordering $\beta_1>\beta_2>\ldots>\beta_m$ imposed on $\Phi^+$ determines a PBW basis
\[
\left\{E^a:=E_{-\beta_1}^{a_1}E_{-\beta_2}^{a_2}\ldots E_{-\beta_m}^{a_m}\ \left|\  a=\sum_{i=1}^m a_i\eps_i\in\Gamma\right.\right\}
\]
of $U(\bar\frn)$, where $m:=|\Phi^+|$ and $\Gamma:=\sum_{i=1}^m\bbN\eps_i$ is the rank $m$ torsion-free additive semigroup with base elements $\eps_i$. The corresponding polynomial algebra is defined to be
\[
\caA:=\bbC[x_i\ |\ 1\leq i\leq m],
\]
with a basis
\[
\{x^a:=\prod_{i=1}^mx_i^{a_i}\ |\ a\in\Gamma \}.
\]
Then we have a linear isomorphism $\tau: M_\lambda\ra\caA$ given by
\[
\tau(E^av_\lambda)=x^a
\]
for $a\in\Gamma$. The polynomial algebra $\caA$ becomes a $U(\frg)$-module with the action
\[
u(f)=\tau(u(\tau^{-1}(f)))
\]
for $u\in U(\frg)$ and $f\in\caA$. This gives a differential-operator action of $U(\frg)$ on $\caA$ (Proposition \ref{prop1}). For convenience, we denote
\[
\zeta_\alpha:=H_{\alpha}|_\caA,\ \eta_\beta:=E_{-\beta}|_\caA\ \mbox{and}\ d_\beta:=E_{\beta}|_\caA,
\]
for $\alpha\in\Delta, \beta\in\Phi^+$. Thus any singular vector in $M_\lambda$ corresponds to a polynomial solution of the system of partial differential equations
\begin{equation}\label{int1}
d_\alpha(f)=0,
\end{equation}
for all $\alpha\in\Delta$ and unknown function $f$ in $\{x_i\ |\ i=1,2,\ldots,m\}$. We say that $f$ is {\it weighted}, with weight $\mu$, if there exists $\mu\in\frh^*$ such that $\zeta_\alpha(f)=\mu(H_{\alpha})f$ for all $\alpha\in\Delta$.

To make notation simpler, we often write $x_{\beta_i}=x_{i}$ for $i=1,2,\ldots,m$. Let $\alpha_1, \alpha_2,\ldots, \alpha_n$ be all the simple roots in $\Delta$. Denote
\[
\caA_0:=\bbC\left[x_{\beta}\ |\ \beta\in\Phi^+\backslash \Delta\right]
\]
and
\[
x^{\vec{z}}:=\prod_{i=1}^{n}x_{\alpha_i}^{z_i}\quad\mbox{for}\ \vec{z}=(z_1,z_2,\ldots,z_{n})\in\bbC^{n}.
\]
The space of truncated-up formal power series in $\{x_{\alpha_1},x_{\alpha_2}\ldots,x_{\alpha_n}\}$ over $\caA_0$ is
\[
\caA_1:=\left\{f=\sum_{\vec{j}\in\bbN^{n}}\left.\sum_{i=0}^pf_{\vec{z}^{i}-\vec{j}}x^{\vec{z}^i-\vec{j}}\ \right|\ p\in\bbN, \vec{z}^i\in\bbC^{n}, f_{\vec{z}^i-\vec{j}}\in\caA_0\right\}.
\]
Then $\caA_1$ contains $\caA$ and is also invariant under the action of $\{\zeta_\alpha, d_\beta, \eta_\beta\ |\ \alpha\in\Delta, \beta\in\Phi^+\}$. It is shown that for a suitable ordering of $\Phi^+$, we can define complex powers of $\eta_\alpha$ for all $\alpha\in\Delta$. Based on an effective use of the root posets, we first prove that there exists at most one solution (up to scalars) in $\caA_1$ for each weight (Lemma \ref{equni1}). This is enough to imply a differential-operator representation of the Weyl group $W$ on $\caA_1$.

\smallskip
\noindent{\bf Theorem A.} (Theorem \ref{thm1}) {\it
The space $\caA_1$ of truncated-up formal power series is a representation of the Weyl group $W$.}
\smallskip

At the same time, we deduce that $\{w(1)\ |\ w\in W\}\in\caA_1$ is a set of weighted solutions of the system \ref{int1}. Moreover, they do exhaust all the possibilities up to scalars. Hence we solve the system \ref{int1}.

\smallskip
\noindent{\bf Theorem B.} (Theorem \ref{thm2}) {\it
The solution space of the system $\ref{int1}$ in $\caA_1$ is spanned by $\{w(1)\ |\ w\in W\}$.}
\smallskip

In order to find all the polynomial solutions, a useful notation is needed. Given $\lambda,\mu\in\frh^*$, write $\mu\uparrow\lambda$ if there exists a positive root $\gamma\in\Phi^+$ such that $\mu=s_\gamma\cdot\lambda=\lambda-\langle\lambda+\rho,\gamma^\vee\rangle\gamma$ and
$\langle\lambda+\rho,\gamma^\vee\rangle\in\mathbb{Z}^{>0}$. We say that $\mu$ is {\it strongly linked} to $\lambda$ if $\mu=\lambda$ or there exist $\gamma_1,
\ldots, \gamma_r\in\Phi^+$ such that
\[
\mu=(s_{\gamma_1}\ldots
s_{\gamma_r})\cdot\lambda\uparrow(s_{\gamma_2}\ldots
s_{\gamma_r})\cdot\lambda\uparrow\ldots\uparrow
s_{\gamma_r}\cdot\lambda\uparrow\lambda.
\]

\smallskip
\noindent{\bf Theorem C.} (Theorem \ref{thm3}) {\it
Let $f\in\caA_1$ be a nonzero weighted solution of the system $\ref{int1}$, with weight $\mu$. Then $f$ is a polynomial if and only if $\mu$ is strongly linked to $\lambda$. In this case, $\tau^{-1}(f)$ is a singular vector of $M_\lambda$.}
\smallskip

The above results involve a new approach of the well known BGG-Verma Theorem (see the proof of Theorem \ref{BGG-V thm}). With our main theorems in hand, it is possible to explicitly write down the formulas of singular vectors in a proper PBW basis. This has been done for $\frsl(n,\bbC)$ in \cite{Xi2}. The formulas coincide with those in \cite{MFF} and \cite{DF} which come from different approaches. In the present paper, we also give some singular vectors for $\frsp(2n)$. More formulas of singular vectors by this approach will appear in future work. It was showed in \cite{Xi1} (i.e., Example 4.18) that many singular vectors in generalized Verma modules can be constructed from singular vectors in Verma modules. The singular vectors in generalized Verma modules determine homomorphisms between generalized Verma modules and invariant differential operators between homogeneous vector bundles. The approach in this paper would shed some light on these classical open-ended problems in representation theory and parabolic geometry.

At last, we briefly describe the contents of this paper. In Section 2, we recall the basic notions of Verma modules and derive a system of partial differential equations. In section 3, we get a differential-operator representation of the Weyl group on the space of truncated formal power series by an important lemma (Lemma \ref{eqgra1}). In section 4, we solve the system and give a new proof of the BGG-Verma Theorem. In section 5, we give a formula of singular vectors for $\frsp(2n)$. In section 6, we prove Lemma \ref{eqgra1} by investigating the Hasse diagram of root posets.

%
%
\section{Verma modules and differential equations}
%
%
In this section, we derive a system of partial differential equations to determine singular vectors in any given Verma module.

\subsection{Verma modules and Chavelley basis}
We now describe basic notions of Verma modules, referring to Humphreys \cite{H2} for full details. Let $\frg$ be a semisimple Lie algebra over $\bbC$, containing a fixed Cartan subalgebra $\frh$. Let $\Phi\subset\frh^*$ be the root system of $(\frg,\frh)$, with a fixed positive system $\Phi^+$ and the related simple system $\Delta\subset\Phi^+$. Denote by $\frg_\alpha$ the root subspace associated with $\alpha\in\Phi$. Then we have a Cartan decomposition $\frg=\bar\frn\oplus\frh\oplus\frn$, where $\bar\frn:=\bigoplus_{\alpha\in\Phi^+}\frg_{-\alpha}$ and $\frn:=\bigoplus_{\alpha\in\Phi^+}\frg_\alpha$. The corresponding Borel subalgebra is $\frb:=\frh\oplus\frn$.

Let $W$ be the associated Weyl group. Define the {\it dot action} of $w\in W$ on $\lambda\in\frh^*$ by $w\cdot\lambda=w(\lambda+\rho)-\rho$, where $\rho=\frac{1}{2}\sum_{\alpha\in\Phi^+}{\alpha}$. Denote by $\langle,\rangle$ the usual bilinear form on $\frh^*$ and by
$\alpha^\vee=2\alpha/\langle\alpha,\alpha\rangle$ the coroot of $\alpha\in\Phi$.

The {\it Verma module} of highest weight
$\lambda$ is defined by
\begin{equation*}
M_\lambda:=U(\frg)\otimes_{U(\frb)}\bbC v_\lambda,
\end{equation*}
where $v_\lambda$ is a highest weight vector of weight $\lambda\in\frh^*$. The PBW Theorem allows us to write
\[
M_\lambda\simeq U(\bar\frn)v_\lambda
\]
as a left $U(\bar\frn)$-module.

\begin{theorem}[\cite{H1}, Theorem 25.2] \label{Chavelley basis}
We can choose root vectors $E_\alpha\in\frg_\alpha$ and
$H_\alpha\in\frh$ such that, for all $\alpha, \beta\in\Phi$,
\begin{align*}
&[H_\alpha, E_\beta]=\langle\beta,\alpha^\vee\rangle E_\beta,\\
&[E_\alpha, E_{-\alpha}]=H_\alpha,\\
&[E_\alpha,
E_\beta]=N_{\alpha,\beta}E_{\alpha+\beta}\quad\quad\alpha+\beta\neq0,
\end{align*}
where constants $N_{\alpha, \beta}$ satisfy $N_{\alpha, \beta}=-N_{\beta, \alpha}.$ If $\alpha+\beta\not\in\Phi$, then $N_{\alpha, \beta}=0$. Otherwise
$N_{\alpha, \beta}=\pm(p+1)$, where $\beta+n\alpha$, with $-p\leq n\leq
q$, is the $\alpha$ string containing $\beta$.

\end{theorem}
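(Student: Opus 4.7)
The plan is to construct the basis in three layers: first the Cartan side, then the opposite-pair normalization, and finally the delicate sign coherence for the structure constants $N_{\alpha,\beta}$. I would begin with the root-space decomposition $\frg=\frh\oplus\bigoplus_{\alpha\in\Phi}\frg_\alpha$ with each $\frg_\alpha$ one-dimensional. For each $\alpha\in\Phi$ define $H_\alpha$ to be the unique element of $[\frg_\alpha,\frg_{-\alpha}]$ satisfying $\alpha(H_\alpha)=2$; the first commutation relation $[H_\alpha,E_\beta]=\langle\beta,\alpha^\vee\rangle E_\beta$ is then automatic from the root-space definition. Next, for each $\alpha\in\Phi^+$ pick any nonzero $E_\alpha\in\frg_\alpha$; since $[E_\alpha,\frg_{-\alpha}]$ is a nonzero line inside $\bbC H_\alpha$, there is a unique $E_{-\alpha}\in\frg_{-\alpha}$ with $[E_\alpha,E_{-\alpha}]=H_\alpha$, securing the second relation. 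Because $[\frg_\alpha,\frg_\beta]\subset\frg_{\alpha+\beta}$, the bracket $[E_\alpha,E_\beta]$ is forced to equal $N_{\alpha,\beta}E_{\alpha+\beta}$ for some scalar, with $N_{\alpha,\beta}=0$ whenever $\alpha+\beta\notin\Phi\cup\{0\}$, and antisymmetry of the bracket gives $N_{\alpha,\beta}=-N_{\beta,\alpha}$.

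To pin down the scalars $N_{\alpha,\beta}$, I would apply $\frsl_2$-representation theory (via the triple $\{E_\alpha,E_{-\alpha},H_\alpha\}$) to the $\alpha$-string $\beta-p\alpha,\ldots,\beta+q\alpha$ through $\beta$. This yields a multiplicative identity of the form $N_{\alpha,\beta}N_{-\alpha,\alpha+\beta}=q(p+1)$, together with the Jacobi cocycle condition
\[
N_{\alpha,\beta}N_{\gamma,-\alpha-\beta}+N_{\beta,\gamma}N_{\alpha,-\beta-\gamma}+N_{\gamma,\alpha}N_{\beta,-\gamma-\alpha}=0
\]
whenever $\alpha+\beta+\gamma=0$. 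The remaining freedom is exactly the rescaling of each $E_\alpha$ by a nonzero scalar, which must be used to force every $N_{\alpha,\beta}$ to take the integer value $\pm(p+1)$. The cleanest route is to construct the Chevalley involution $\omega:\frg\to\frg$ determined by $\omega|_{\frh}=-\id$ and $\omega(E_\alpha)=-E_{-\alpha}$; once $\omega$ is a well-defined Lie algebra automorphism, one obtains $N_{-\alpha,-\beta}=-N_{\alpha,\beta}$, and combining this with the $\frsl_2$-string identity collapses the constraint to $N_{\alpha,\beta}^2=(p+1)^2$.

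Constructing $\omega$ is the main obstacle, and it reduces to choosing signs for the $E_\alpha$ inductively on $\htt(\alpha)$. At each height, consistency of the sign choice is exactly the Jacobi cocycle condition displayed above; verifying that this cocycle admits a global sign solution uses the fact that every $\alpha$-string has length at most $4$ in a finite-dimensional semisimple algebra, and the reduction to irreducible components, handled case by case (or more uniformly via Kostant's $\bbZ$-form of $U(\frg)$ and the existence of the associated simply connected semisimple algebraic group over $\bbZ$). In practice I would start the induction from the simple roots $\Delta$, where signs may be chosen freely, extend to all positive roots one height at a time, and then force the negatives through $\omega$; once the signs are fixed, the multiplicative and cocycle identities above guarantee the required form $N_{\alpha,\beta}=\pm(p+1)$.
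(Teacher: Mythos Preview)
The paper does not prove this theorem at all: it is stated with the attribution ``[\textbf{H1}], Theorem 25.2'' and used as a black box, so there is no in-paper proof to compare against. Your outline is essentially the standard argument one finds in Humphreys' textbook (construct the $\frsl_2$-triples, use the $\alpha$-string representation theory to get $N_{\alpha,\beta}N_{-\alpha,-\beta}=-(p+1)(q+1)$ after normalization, build the Chevalley involution $\omega$ by an inductive sign choice on heights, and then conclude $N_{\alpha,\beta}^2=(p+1)^2$), so in spirit you are reproducing the cited reference rather than diverging from the paper.

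One caution: your parenthetical remark that the sign-coherence step is ``handled case by case'' or via Kostant's $\bbZ$-form overstates the difficulty. In Humphreys' proof no classification or case analysis is needed; the inductive sign assignment goes through uniformly once one has the identity $N_{\alpha,\beta}/(p+1)=N_{\gamma,\alpha}/(p'+1)=N_{\beta,\gamma}/(p''+1)$ for $\alpha+\beta+\gamma=0$ (his Lemma 25.1), which follows from the Jacobi identity and the invariance of the Killing form. Invoking Kostant or the algebraic group over $\bbZ$ is circular, since those constructions presuppose the Chevalley basis. If you tighten that step to the direct uniform argument, your sketch is sound.
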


\begin{lemma}\label{Chavelley basis2}
If all roots in $\Phi$ are of equal length, then for all $\alpha, \beta, \gamma\in\Phi$ such that $\alpha+\beta+\gamma=0$,
\[
N_{\alpha, \beta}=N_{\beta, \gamma}=N_{\gamma, \alpha}.
\]
\end{lemma}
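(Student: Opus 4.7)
The plan is to deduce the claim from the associativity (invariance) of the Killing form $\kappa$ on $\frg$, namely $\kappa([x,y],z)=\kappa(x,[y,z])$, applied cyclically to the triple $E_\alpha,E_\beta,E_\gamma$ of Chevalley vectors whose roots sum to zero. Since $\alpha+\beta=-\gamma\in\Phi$ and similarly for the other two pairs, Theorem~\ref{Chavelley basis} gives $[E_\alpha,E_\beta]=N_{\alpha,\beta}E_{-\gamma}$, $[E_\beta,E_\gamma]=N_{\beta,\gamma}E_{-\alpha}$, and $[E_\gamma,E_\alpha]=N_{\gamma,\alpha}E_{-\beta}$, so none of the three structure constants is forced to vanish by the hypothesis $\alpha+\beta+\gamma=0$.

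First I would expand both sides of $\kappa([E_\alpha,E_\beta],E_\gamma)=\kappa(E_\alpha,[E_\beta,E_\gamma])$ using Theorem~\ref{Chavelley basis}: the left hand side becomes $N_{\alpha,\beta}\,\kappa(E_{-\gamma},E_\gamma)$ and the right hand side becomes $N_{\beta,\gamma}\,\kappa(E_\alpha,E_{-\alpha})$. Performing the analogous computation with $(\alpha,\beta,\gamma)$ cyclically rotated to $(\beta,\gamma,\alpha)$ produces a third expression $N_{\gamma,\alpha}\,\kappa(E_{-\beta},E_\beta)$. Using the symmetry of $\kappa$, I obtain the chain
\[
N_{\alpha,\beta}\,\kappa(E_\gamma,E_{-\gamma})=N_{\beta,\gamma}\,\kappa(E_\alpha,E_{-\alpha})=N_{\gamma,\alpha}\,\kappa(E_\beta,E_{-\beta}).
\]

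Second I would invoke the standard consequence of the Chevalley normalization: since $[E_\alpha,E_{-\alpha}]=\kappa(E_\alpha,E_{-\alpha})\,t_\alpha$, where $t_\alpha\in\frh$ is the Killing-form dual of $\alpha$, must coincide with $H_\alpha=2t_\alpha/\langle\alpha,\alpha\rangle$, one obtains $\kappa(E_\alpha,E_{-\alpha})=2/\langle\alpha,\alpha\rangle$, which depends on $\alpha$ only through its squared length. Under the equal-length hypothesis the three Killing values in the chain above are identical, and cancelling them forces $N_{\alpha,\beta}=N_{\beta,\gamma}=N_{\gamma,\alpha}$.

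The main obstacle is really only checking that the prerequisite facts from classical Lie theory (the invariance of $\kappa$, and the identity $\kappa(E_\alpha,E_{-\alpha})=2/\langle\alpha,\alpha\rangle$ for the normalized root vectors) are applied with the correct scalar conventions; beyond that the argument is a short cyclic bookkeeping that does not rely on any finer structure of $\Phi$ beyond simple-lacedness.
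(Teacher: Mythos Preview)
Your proof is correct but proceeds by a genuinely different route from the paper's. The paper argues via the Jacobi identity: expanding $[[E_\alpha,E_\beta],E_\gamma]+[[E_\beta,E_\gamma],E_\alpha]+[[E_\gamma,E_\alpha],E_\beta]=0$ yields $N_{\alpha,\beta}H_\gamma+N_{\beta,\gamma}H_\alpha+N_{\gamma,\alpha}H_\beta=0$, and then one uses the auxiliary fact $H_\alpha+H_\beta+H_\gamma=0$ (proved separately from the equal-length hypothesis) together with the linear independence of $H_\alpha,H_\beta$ in $\frh$ to force the three constants to coincide. Your argument instead exploits the invariance of the Killing form and the Chevalley normalization $\kappa(E_\alpha,E_{-\alpha})=2/\langle\alpha,\alpha\rangle$, which collapses the equal-length hypothesis directly into a cancellation. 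Your approach is slightly more economical in that it avoids the intermediate lemma about the $H$'s; the paper's approach, on the other hand, stays entirely inside the Lie-bracket relations of Theorem~\ref{Chavelley basis} without appealing to the Killing form or its normalization on root vectors. Both are standard and equally elementary.
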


\begin{proof}
Since all roots are of equal length, we have $\langle\alpha,\alpha\rangle=\langle\beta,\beta\rangle=\langle\gamma,\gamma\rangle$. For any $\delta\in\Phi$, it follows from Theorem \ref{Chavelley basis} that
\begin{align*}
[H_\alpha+H_\beta+H_\gamma, E_\delta]&=\left(\langle\delta,\alpha^\vee\rangle+\langle\delta,\beta^\vee\rangle+\langle\delta,\gamma^\vee\rangle\right) E_\delta\\
&=\frac{2\langle\delta,\alpha+\beta+\gamma\rangle}{\langle\alpha,\alpha\rangle} E_\delta=0.
\end{align*}
Therefore $H_\alpha+H_\beta+H_\gamma=0$. On the other hand, the Jocobi identity implies that
\[
[[E_\alpha, E_\beta], E_\gamma]+[[E_\beta, E_\gamma], E_\alpha]+[[E_\gamma, E_\alpha], E_\beta]=0.
\]
Thus
\[
N_{\alpha, \beta}[E_{-\gamma}, E_\gamma]+N_{\beta, \gamma}[E_{-\alpha}, E_\alpha]+N_{\gamma, \alpha}[E_{-\beta}, E_\beta]=0
\]
and $N_{\alpha, \beta}H_\gamma+N_{\beta, \gamma}H_\alpha+N_{\gamma, \alpha}H_\beta=0$. Since $H_\gamma=-H_\alpha-H_\beta$ and ${H_\alpha, H_\beta}$ are linearly independent, this can happen only when $N_{\alpha, \beta}=N_{\beta, \gamma}=N_{\gamma, \alpha}$.
\end{proof}

Denote $m:=|\Phi^+|$. Then any ordering $\beta_1>\beta_2>\ldots>\beta_m$ of $\Phi^+$ admits a natural bijection $$\iota:\Phi^+\ra \{1,2,\ldots,m\}$$ such that
$\iota(\beta_i)=i.$ Let
\[
\Gamma:=\sum_{i=1}^m\bbN\eps_i
\]
be the rank $m$ torsion-free additive semigroup with base elements $\eps_i$. Here $\bbN$ is the additive semigroup of nonnegative integers. For any
$a=\sum_{i=1}^ma_i\eps_i\in\Gamma,$
define
\begin{equation}\label{eq PBW basis1}
E^a:=E_{-\beta_1}^{a_1}E_{-\beta_2}^{a_m}\ldots E_{-\beta_m}^{a_m}\in U(\bar\frn).
\end{equation}
The set of all monomials $E^a$ ($a\in\Gamma$) is exactly a PBW basis of $U(\bar\frn)$. For simplicity, we sometimes use notations $a_\beta:=a_{\iota(\beta)}$ and $\eps_\beta:=\eps_{\iota(\beta)}$ for $a\in\Gamma$ and $\beta\in\Phi^+$. Then
\[
\Gamma_s:=\{a\in\Gamma\ |\ a_\alpha=0\ \mbox{for all}\ \alpha\in\Delta\}
\]
is a subsemigroup of $\Gamma$.

\begin{lemma}\label{commutator}
If $\alpha,\beta\in\Phi$, then for $k\in\bbN$,
\begin{equation*}
\begin{aligned}
E_\alpha^kE_\beta=&E_\beta E_\alpha^k+\binom{k}{1}[E_\alpha,E_\beta]E_\alpha^{k-1}+\binom{k}{2}[E_\alpha,[E_\alpha,E_\beta]]E_\alpha^{k-2}\\
&+\binom{k}{3}[E_\alpha,[E_\alpha,[E_\alpha,E_\beta]]]E_\alpha^{k-3},
\end{aligned}
\end{equation*}
where $\binom{k}{i}$ are the binomial coefficients.
\end{lemma}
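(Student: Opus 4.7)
The plan is to prove the identity by first establishing a general Leibniz-type expansion that holds in any associative algebra, and then truncating it using the bound on the lengths of root strings in a semisimple Lie algebra.

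For the first step, let $L$ and $R$ denote left and right multiplication by $E_\alpha$ on $U(\frg)$. These are commuting linear endomorphisms, and their difference $L - R$ is exactly $\ad E_\alpha$. Since $L$ and $R$ commute, the ordinary binomial theorem applies and yields the operator identity
\[
L^k \;=\; (R + \ad E_\alpha)^k \;=\; \sum_{i=0}^{k}\binom{k}{i}(\ad E_\alpha)^i\, R^{k-i}.
\]
Applying both sides to $E_\beta$ gives the unrestricted expansion
\[
E_\alpha^k E_\beta \;=\; \sum_{i=0}^{k}\binom{k}{i}(\ad E_\alpha)^i(E_\beta)\, E_\alpha^{k-i}.
\]

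For the second step, I would invoke the standard structural fact that every $\alpha$-string of roots has length at most four (this maximum is realized only in type $G_2$). Thus if $\beta - p\alpha,\ldots,\beta + q\alpha$ is the $\alpha$-string through $\beta$, then $p + q + 1 \leq 4$, so $q \leq 3$. Since $\beta + (q+1)\alpha$ is not a root, the iterated bracket $(\ad E_\alpha)^{q+1}(E_\beta)$ vanishes, and hence $(\ad E_\alpha)^i(E_\beta) = 0$ for every $i \geq 4$. The remaining case $\alpha + \beta = 0$ is settled by a three-line direct computation: $[E_\alpha, E_{-\alpha}] = H_\alpha$, $[E_\alpha, H_\alpha] = -2 E_\alpha$, and $[E_\alpha, E_\alpha] = 0$, so $(\ad E_\alpha)^3(E_{-\alpha}) = 0$ as well. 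Substituting this truncation into the general expansion leaves exactly the four terms appearing in the lemma.

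The main obstacle is negligible: once the Leibniz expansion is in hand, everything reduces to the length bound on root strings, which is a standard input from the structure theory of semisimple Lie algebras (see Humphreys \cite{H1}, Proposition 9.4). The only minor care needed is in the degenerate case $\alpha = -\beta$, where the successive brackets leave the root-space framework, but a direct verification with $H_\alpha$ handles it immediately.
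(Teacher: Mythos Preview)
Your proof is correct and follows essentially the same two-step approach as the paper: derive the general Leibniz expansion $E_\alpha^k E_\beta = \sum_{i=0}^k \binom{k}{i}(\ad E_\alpha)^i(E_\beta)\,E_\alpha^{k-i}$, then truncate using the bound on root-string lengths. The only differences are cosmetic: the paper cites the expansion from \cite{H2} rather than deriving it via the $L = R + \ad E_\alpha$ binomial trick, and you are slightly more careful in treating the degenerate case $\beta = -\alpha$ separately, which the paper's appeal to root strings glosses over.
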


\begin{proof}
It follows from Lemma 2.6 in \cite{H2} that
\[
E_\alpha^kE_\beta=\sum_{i=0}^k\binom{k}{i}(\ad E_\alpha)^{i}(E_\beta) E_\alpha^{k-i}.
\]
On the other hand, the $\alpha$ string containing $\beta$ contains at most four elements. Therefore
\[
(\ad E_\alpha)^{i}(E_\beta)=0
\]
unless $i\leq3$.
\end{proof}

\subsection{Differential operators}

Consider the polynomial algebra
\[
\caA=\bbC[x_i\ |\ 1\leq i\leq m]
\]
in $m$ variables. Then
\[
\{x^a:=\prod_{i=1}^mx_i^{a_i}\ |\ a\in\Gamma\}
\]
is a basis of $\caA$. The degree of $x^a$ is
\[
|a|:=\sum_{i=1}^ma_i.
\]
There exists a linear isomorphism $\tau: M_\lambda\ra\caA$ such that
\[
\tau(E^av_\lambda)=x^a
\]
for $a\in\Gamma$. Given $u\in U(\frg)$ and $f\in\caA$, we define
\begin{equation}\label{eqenv1}
u(f)=\tau(u(\tau^{-1}(f))).
\end{equation}
Thus $\caA$ is naturally a $U(\frg)$-module. Set $\pt_i=\pt/\pt x_i$. Similarly, we often write $x_\beta:=x_{\iota(\beta)}$ and $\pt_\beta:=\pt_{\iota(\beta)}$.
Denote by $A$ the Weyl algebra generated by $x_\beta$ and $\pt_\beta$ for $\beta\in\Phi^+$. A basis of $A$ is
\[
\{x^a\pt^b\ |\ a, b\in\Gamma;\ \pt^b:=\prod_{i=1}^m\pt_i^{b_i}\}.
\]
For $i\in\bbZ$, denote
\[
A^i:=\mathrm{span}\{x^a\pt^b\ |\ a, b\in\Gamma, |a|-|b|\leq i\}.
\]
Then $A^iA^j=A^{i+j}$ for $i, j\in\bbZ$ and we obtain a filtration
\[
\ldots \subset A^{-2}\subset A^{-1}\subset A^0 \subset A^1 \subset A^2 \subset \ldots.
\]
In particular, $\pt_k\in A^{-1}$ and $x_k\in A^1$ for $k=1, \ldots, m$. Similarly, we can define
\[
A_s^i:=\mathrm{span}\{x^a\pt^b\ |\ a\in\Gamma_s, b\in\Gamma, |a|-|b|\leq i\}\subset A^i.
\]
In this section we will be able to show that the action of $U(\frg)$ on $\caA$ are differential-operator action, that is, we have $u|_\caA\in A$ for any $u\in U(\frg)$ (Proposition \ref{prop1}). The following lemmas will be needed in the proof.

\begin{lemma}\label{basic lem2}
Given $\beta\in\Phi^+$, let $\eta(\beta,i)$ be the operator on $\caA$ such that
\[
\eta(\beta,i)x^a=\tau(E_{-\beta_1}^{a_1}\ldots E_{-\beta_{i}}^{a_{i}}E_{-\beta}E_{-\beta_{i+1}}^{a_{i+1}}\ldots E_{-\beta_m}^{a_m}v_\lambda),
\]
where $i=0,1,\ldots,m$ and $a\in\Gamma$. Then
\begin{equation*}
\eta(\beta,i)-x_\beta\in A_s^0.
\end{equation*}
\end{lemma}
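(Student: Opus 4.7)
The plan is to induct on $\htt(\beta)$ in the \emph{decreasing} direction, while running a short telescoping argument in the second index $i$. Write $k=\iota(\beta)$ throughout.

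\emph{Base case: $\beta$ is a highest root.} Then for every $\beta_j\in\Phi^+$ and every $l\geq 1$, the element $l\beta_j+\beta$ has height exceeding the maximum, hence is not in $\Phi$, and so $(\ad E_{-\beta_j})^l(E_{-\beta})=0$ by Lemma~\ref{commutator}. Therefore $E_{-\beta}$ commutes with each $E_{-\beta_j}^{a_j}$, and one simply slides it into PBW position $k$:
\[
\eta(\beta,i)(x^a)=\tau\bigl(E_{-\beta_1}^{a_1}\cdots E_{-\beta_k}^{a_k+1}\cdots E_{-\beta_m}^{a_m}v_\lambda\bigr)=x^{a+\eps_k}=x_\beta x^a,
\]
so $\eta(\beta,i)=x_\beta$ exactly.

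\emph{Inductive step.} Assume the lemma for every $\beta'\in\Phi^+$ with $\htt(\beta')>\htt(\beta)$. Exactly as in the base case, the ``correct'' insertion point $i=k-1$ gives $\eta(\beta,k-1)=x_\beta$ with no commutators required. I then telescope in $i$ using
\[
\eta(\beta,i+1)(x^a)-\eta(\beta,i)(x^a)=\tau\bigl(E_{-\beta_1}^{a_1}\cdots E_{-\beta_i}^{a_i}\,[E_{-\beta_{i+1}}^{a_{i+1}},E_{-\beta}]\,E_{-\beta_{i+2}}^{a_{i+2}}\cdots v_\lambda\bigr),
\]
and expand the inner commutator by Lemma~\ref{commutator}. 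Each term $(\ad E_{-\beta_{i+1}})^l(E_{-\beta})$ equals a scalar $c_l$ times $E_{-\gamma_l}$, where $\gamma_l:=l\beta_{i+1}+\beta$ is either zero or a positive root; when nonzero it is a sum of at least two positive roots, so $\gamma_l\in\Phi^+\setminus\Delta$ and $\htt(\gamma_l)>\htt(\beta)$. Using $\binom{a_{i+1}}{l}x^{a-l\eps_{i+1}}=\tfrac{1}{l!}\pt_{i+1}^{\,l}(x^a)$, this gives the operator identity
\[
\eta(\beta,i+1)-\eta(\beta,i)\;=\;\sum_{l=1}^{3}\frac{c_l}{l!}\,\eta(\gamma_l,i)\,\pt_{i+1}^{\,l}.
\]

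By the inductive hypothesis, $\eta(\gamma_l,i)=x_{\gamma_l}+R_l$ with $R_l\in A_s^0$, and because $\gamma_l$ is nonsimple we have $x_{\gamma_l}\in A_s^1$, so $\eta(\gamma_l,i)\in A_s^1$. Since $\pt_{i+1}^{\,l}\in A^{-l}$ and $A_s^p\cdot A^q\subset A_s^{p+q}$, each summand lies in $A_s^{1-l}\subset A_s^0$ for $l\geq 1$. Hence every consecutive difference is in $A_s^0$, and telescoping outward from $i=k-1$ yields $\eta(\beta,i)-x_\beta\in A_s^0$ for all $i$. The main obstacle is choosing the right well-founded measure for the induction: height works precisely because every nonzero commutator produced by Lemma~\ref{commutator} yields a root $\gamma_l$ that is both \emph{nonsimple} and of \emph{strictly larger} height than $\beta$, and these are exactly the two properties that push the correction into $A_s^0$.
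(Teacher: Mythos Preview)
Your proof is correct and follows essentially the same approach as the paper: downward induction on $\htt(\beta)$, with the recursion in $i$ obtained from Lemma~\ref{commutator}, the anchor $\eta(\beta,\iota(\beta)-1)=x_\beta$, and the observation that each commutator root $\gamma_l=\beta+l\beta_{i+1}$ is nonsimple of strictly larger height. One small caveat: the general inclusion you state, $A_s^p\cdot A^q\subset A_s^{p+q}$, is false (e.g.\ $1\in A_s^0$ and $x_\alpha\in A^1$ for simple $\alpha$, but $x_\alpha\notin A_s^1$); what you actually use, and what is true, is $A_s^p\cdot\pt_{i+1}^{\,l}\subset A_s^{p-l}$, since right-multiplying a basis element $x^a\pt^b$ (with $a\in\Gamma_s$) by a pure derivative keeps $a\in\Gamma_s$.
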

\begin{proof}
Let $\htt\beta$ be the height of the root $\beta\in\Phi$ (see \cite{H1}). We use downward induction on $\htt\beta$ to prove the lemma. Start with $\beta\in\Phi^+$ which has the largest possible height. Then $[E_{-\beta}, E_{-\beta_j}]=0$ for $j=1,2,\ldots,m$. This yields
\[
\eta(\beta,i)x^a=\tau(E^{a+\epsilon_{\beta}}v_\lambda)=x^{a+\epsilon_{\beta}}=x_{\beta} x^a
\]
for all $a\in\Gamma$ and thus $\eta(\beta,i)-x_{\beta}=0\in A_s^0$ for $i=0,1,\ldots,m$. If $\htt\beta$ is not the largest, assume that $\eta(\beta', i)-x_{\beta'}\in A_s^0$ for all $\beta'\in\Phi^+$ such that $\htt\beta'>\htt\beta$ and $i=0,1,\ldots,m$. It follows from Lemma \ref{commutator} that
\[
E_{-\beta_{i}}^{a_{i}}E_{-\beta}=\sum_{j=0}^{3}\binom{a_i}{j}(\ad E_{-\beta_i})^{j}(E_{-\beta}) E_{-\beta_i}^{a_i-j}.
\]
Therefore we have
\begin{equation*}
\begin{aligned}
&\eta(\beta, i)x^a\\
=&\eta(\beta, i-1)x^a+c_{1}a_i\eta(\beta+\beta_i, i-1)x^{a-\ve_i}+c_{2}a_i(a_i-1)\eta(\beta+2\beta_i, i-1)x^{a-2\ve_i}\\
&+c_{3}a_i(a_i-1)(a_i-2)\eta(\beta+3\beta_i, i-1)x^{a-3\ve_i}\\
=&\eta(\beta, i-1)x^a+c_{1}\eta(\beta+\beta_i, i-1)\pt_ix^a+c_{2}\eta(\beta+2\beta_i, i-1)\pt_i^2x^a\\
&+c_{3}\eta(\beta+3\beta_i, i-1)\pt_i^3x^a,\\
\end{aligned}
\end{equation*}
with constants $c_{1}$, $c_2$ and $c_3$. Here $\eta(\beta+k\beta_i, i-1)=0$ if $\beta+k\beta_i\not\in\Phi$ for $k=1,2$ or $3$. So we have
\begin{equation}\label{eta1}
\begin{aligned}
\eta(\beta, i)=&\eta(\beta, i-1)+c_{1}\eta(\beta+\beta_i, i-1)\pt_i\\
&+c_{2}\eta(\beta+2\beta_i, i-1)\pt_i^2+c_{3}\eta(\beta+3\beta_i, i-1)\pt_i^3.
\end{aligned}
\end{equation}
Since $\htt(\beta+k\beta_i)>\htt\beta$, the induction hypothesis can be applied, yielding $\eta(\beta, i)-\eta(\beta, i-1)\in A_s^0$ for $i=1, \ldots, m$. It is obvious that $\eta(\beta, i)=x_\beta$ when $i=\iota(\beta)$ or $\iota(\beta)-1$ (that is, $\beta=\beta_i$ or $\beta_{i+1}$). Hence $\eta(\beta, i)-x_\beta\in A_s^0$ for $i=0, 1, \ldots, m$.
\end{proof}

\begin{lemma}\label{basic lem3}
Given $\beta\in\Delta$, let $d(\beta,i)$ be the operator on $\caA$ such that
\[
d(\beta,i)x^a=\tau(E_{-\beta_1}^{a_1}\ldots E_{-\beta_{i}}^{a_{i}}E_{\beta}E_{-\beta_{i+1}}^{a_{i+1}}\ldots E_{-\beta_m}^{a_m}v_\lambda).
\]
where $i=0,1,\ldots,m$ and $a\in\Gamma$. Then
\[
d(\beta,i)-\sum_{j> i, \beta_j-\beta\in\Phi^+}N_{\beta,-\beta_j}x_{\beta_j-\beta}\pt_j
\in A^{-1}.
\]
\end{lemma}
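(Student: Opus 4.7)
My plan is to proceed by downward induction on $i$, mirroring the strategy of Lemma \ref{basic lem2}. The base case $i = m$ is immediate since $E_\beta v_\lambda = 0$ forces $d(\beta, m) = 0$ while the sum on the right is empty, so both sides lie in $A^{-1}$. For the inductive step, I compare $d(\beta, i-1)$ with $d(\beta, i)$ by commuting $E_\beta$ across the single factor $E_{-\beta_i}^{a_i}$. Applying Lemma \ref{commutator} with $E_{-\beta_i}$ in the role of $E_\alpha$ and $E_\beta$ in the role of the second letter yields
\[
d(\beta, i-1)\, x^a - d(\beta, i)\, x^a = -\sum_{j=1}^{3} \binom{a_i}{j}\, \tau\!\left(E_{-\beta_1}^{a_1}\cdots E_{-\beta_{i-1}}^{a_{i-1}}\, (\ad E_{-\beta_i})^j(E_\beta)\, E_{-\beta_i}^{a_i - j}\cdots E_{-\beta_m}^{a_m}\, v_\lambda\right).
\]
Since $\binom{a_i}{j}\, x^{a - j\eps_i} = \tfrac{1}{j!}\, \pt_i^j\, x^a$, each term on the right becomes an insertion operator of type $\eta(\cdot,\, i-1)$ composed with a power of $\pt_i$.

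The $j = 1$ term supplies the new summand and splits into three sub-cases governed by $[E_{-\beta_i}, E_\beta]$. If $\beta_i - \beta \in \Phi^+$ and $\beta_i \neq \beta$, then $[E_{-\beta_i}, E_\beta] = -N_{\beta, -\beta_i}\, E_{-(\beta_i - \beta)}$, so the $j = 1$ contribution equals $N_{\beta, -\beta_i}\, \eta(\beta_i - \beta,\, i-1)\, \pt_i$. Lemma \ref{basic lem2} gives $\eta(\beta_i - \beta,\, i-1) - x_{\beta_i - \beta} \in A_s^0$, and since the filtration is multiplicative we have $A_s^0 \cdot A^{-1} \subseteq A^{-1}$, so this produces precisely $N_{\beta, -\beta_i}\, x_{\beta_i - \beta}\, \pt_i$ modulo $A^{-1}$. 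If $\beta_i = \beta$, then $[E_{-\beta}, E_\beta] = -H_\beta$; after transporting $H_\beta$ past the remaining PBW factors so that it acts as $\lambda(H_\beta)$ on $v_\lambda$ (picking up additive contributions from $[H_\beta, E_{-\beta_j}^{a_j}]$), one obtains an operator of the form (polynomial in the Euler operators $x_k \pt_k \in A^0$ and in $\lambda(H_\beta)$) times $\pt_i$, which lies in $A^0 \cdot A^{-1} = A^{-1}$. If neither $\beta_i = \beta$ nor $\beta_i - \beta \in \Phi^+$, then $[E_\beta, E_{-\beta_i}] = 0$ and the $j = 1$ term vanishes.

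For $j = 2, 3$, the iterated brackets $(\ad E_{-\beta_i})^j(E_\beta)$ are nonzero only when the $\beta_i$-string through $\beta$ is long enough; when present, they are root vectors (or, in the $\beta_i = \beta$ case, scalar multiples of $H_\beta$ or $E_{-\beta}$). Inserting such an element and composing with $\pt_i^j$ produces an operator of filtration degree at most $1 - j \leq -1$, hence in $A^{-1}$. Combining these $j \geq 2$ estimates with the three sub-cases of $j = 1$ and the inductive hypothesis yields
\[
d(\beta, i-1) - \sum_{j > i-1,\, \beta_j - \beta \in \Phi^+} N_{\beta, -\beta_j}\, x_{\beta_j - \beta}\, \pt_j \in A^{-1},
\]
completing the induction. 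The main delicacy will be the careful tracking of signs and of the Chevalley constants $N_{\beta, -\beta_i}$ that appear when expanding $[E_\beta,\, E_{-\beta_i}^{a_i}]$ via Lemma \ref{commutator}, together with checking that each error term produced by invoking Lemma \ref{basic lem2} lies in the correct piece of the filtration after the final $\pt_i$ is applied.
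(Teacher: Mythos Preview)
Your proof is correct and follows essentially the same approach as the paper: downward induction on $i$ from the base case $d(\beta,m)=0$, commuting $E_\beta$ past $E_{-\beta_i}^{a_i}$ via Lemma \ref{commutator}, and invoking Lemma \ref{basic lem2} to control the resulting $\eta(k\beta_i-\beta,\,i-1)$ insertions modulo $A^{-1}$. The only cosmetic difference is that the paper isolates the case $\beta_i=\beta$ at the outset using the explicit $\frsl_2$ identity $[E_\beta,E_{-\beta}^{a_i}]=a_iE_{-\beta}^{a_i-1}H_\beta-a_i(a_i-1)E_{-\beta}^{a_i-1}$, whereas you fold it into the $j=1,2$ terms of the same commutator expansion.
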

\begin{proof}
Fix $\beta\in\Delta$. We use downward induction on $i$ to prove the lemma.
The case $i=m$ is obvious ($d(\beta,m)=0$). Suppose that we already have the result for $d(\beta, i)$. Consider the operator $d(\beta,i-1)$. If $\beta=\beta_i$, then
\[
[E_\beta, E_{-\beta_i}^{a_i}]=a_iE_{-\beta_i}^{a_i-1}H_\beta-a_i(a_i-1)E_{-\beta_i}^{a_i-1}.
\]
Therefore
\[
\begin{aligned}
d(\beta, i-1)x^a-d(\beta, i)x^a=&a_i\left(\langle\lambda,\beta^\vee\rangle-a_j\sum_{j>i}\langle\beta_j,\beta^\vee\rangle\right)x^{a-\ve_i}-a_i(a_i-1)x^{a-\ve_i}\\
=&\left(\langle\lambda,\beta^\vee\rangle-\sum_{j>i}\langle\beta_j,\beta^\vee\rangle x_j\pt_j-x_i\pt_i\right)\pt_ix^a.
\end{aligned}
\]
So we obtain
\begin{equation}\label{d1}
d(\beta, i-1)-d(\beta, i)=\left(\langle\lambda,\beta^\vee\rangle-\sum_{j>i}\langle\beta_j,\beta^\vee\rangle x_j\pt_j-x_i\pt_i\right)\pt_i\in A^{-1}.
\end{equation}
If $\beta\neq\beta_i$, it follows from Lemma \ref{commutator} that
\begin{equation}\label{d2}
\begin{aligned}
d(\beta, i)=&d(\beta, i-1)+\tilde{c}_1\eta(\beta_{i}-\beta, i-1)\pt_i\\
&+\tilde{c}_{2}\eta(2\beta_i-\beta, i-1)\pt_i^2+\tilde{c}_{3}\eta(3\beta_i-\beta, i-1)\pt_i^3,
\end{aligned}
\end{equation}
with constants $\tilde{c}_1$, $\tilde{c}_2$ and $\tilde{c}_3$. Here $\eta(k\beta_i-\beta,i-1)=0$ if $k\beta_i-\beta\not\in\Phi$ for $k=1,2$ or $3$. In fact, if $k\beta_i-\beta\in\Phi$, then $k\beta_i-\beta\in\Phi^+$ since $\htt(k\beta_i-\beta)\geq k-1\geq 0$ for $\beta\in\Delta$. In particular, $\tilde{c}_1=N_{-\beta_{i},\beta}=-N_{\beta,-\beta_i}$ by Theorem \ref{Chavelley basis}. Using Lemma \ref{basic lem2}, together with (\ref{d1}), we obtain
\begin{equation*}
\left\{\begin{aligned}
&d(\beta, i-1)-d(\beta, i)\in A^{-1}\qquad\qquad\qquad\qquad\qquad\ \mbox{if}\ \beta_i-\beta\not\in\Phi^+;\\
&d(\beta, i-1)-d(\beta, i)-N_{\beta,-\beta_i}x_{\beta_i-\beta}\pt_i\in A^{-1}\quad\qquad \mbox{if}\ \beta_i-\beta\in\Phi^+.
\end{aligned}
\right.
\end{equation*}
By the induction hypothesis, we get the asserted result for $d(\beta, i-1)$.
\end{proof}

\begin{prop}\label{prop1}
If $u\in U(\frg)$, then $u|_\caA\in A$. In particular
\begin{itemize}
\item[$\mathrm{(\rmnum{1})}$] For $\alpha\in\Delta$,
\[
    \zeta_\alpha:=H_{\alpha}|_\caA=\langle\lambda,\alpha^\vee\rangle-\sum_{\beta\in\Phi^+}
    \langle\beta,\alpha^\vee\rangle x_\beta\pt_\beta.
    \]

\item[$\mathrm{(\rmnum{2})}$] For $\beta\in\Phi^+$,
    \[
    \eta_\beta:=E_{-\beta}|_\caA=x_\beta+\sum_{a\in\Gamma_s,b\in\Gamma, |a|\leq |b|}C_{a,b}^{\beta} x^a\pt^b,
    \]
    where $C_{a,b}^\beta$ are constants and nonzero for only finitely many pairs $(a,b)$.

\item[$\mathrm{(\rmnum{3})}$] For $\beta\in\Phi^+$, denote $D_\beta:=\sum_{\gamma,\gamma-\beta\in\Phi^+}N_{\beta,-\gamma}x_{\gamma-\beta}\pt_\gamma$. Then
\[
    d_\beta:=E_{\beta}|_\caA=D_\beta
    +\sum_{a,b\in\Gamma, |a|<|b|}\tilde{C}_{a,b}^\beta x^a\pt^b,
    \]
    where $\tilde{C}_{a,b}^\beta$ are constants and nonzero for only finitely many pairs $(a,b)$.
\end{itemize}
\end{prop}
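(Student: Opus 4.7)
The plan is to deduce the three explicit formulas (i)--(iii) directly from Lemmas \ref{basic lem2} and \ref{basic lem3} applied at $i=0$, and then derive the general statement $u|_\caA\in A$ from the fact that $A$ is a subalgebra of $\End\caA$ closed under multiplication.

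For part (i), I would avoid any induction and argue by weights. Each basis vector $E^av_\lambda$ is a weight vector of weight $\lambda-\sum_i a_i\beta_i$, hence
\[
H_\alpha E^av_\lambda=\Bigl(\langle\lambda,\alpha^\vee\rangle-\sum_{i=1}^m a_i\langle\beta_i,\alpha^\vee\rangle\Bigr)E^av_\lambda.
\]
Under $\tau$ this becomes a scalar multiple of $x^a$, and the identity $x_i\pt_i(x^a)=a_ix^a$ immediately rewrites the scalar as an operator in $A$, yielding the asserted formula for $\zeta_\alpha$.

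For parts (ii) and (iii), I would observe that the operators introduced in the previous two lemmas satisfy $\eta_\beta=\eta(\beta,0)$ and $d_\beta=d(\beta,0)$, because the monomial $E^a=E_{-\beta_1}^{a_1}\cdots E_{-\beta_m}^{a_m}$ is obtained from the reordered product by inserting the extra factor in the leftmost slot. Lemma \ref{basic lem2} then gives $\eta_\beta-x_\beta\in A_s^0$; expanding any element of $A_s^0$ in the monomial basis $x^a\pt^b$ with $a\in\Gamma_s$, $|a|\le|b|$ yields the formula in (ii), and finiteness of the coefficient sum is automatic since $\eta(\beta,0)$ is produced from finitely many commutator applications in the inductive formula \eqref{eta1}. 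Similarly, Lemma \ref{basic lem3} with $i=0$ gives
\[
d_\beta-\sum_{\substack{j\ge 1\\ \beta_j-\beta\in\Phi^+}}N_{\beta,-\beta_j}x_{\beta_j-\beta}\pt_j\in A^{-1},
\]
and relabeling the summation index $\beta_j=\gamma$ identifies the displayed term with $D_\beta$, giving (iii).

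Finally, the general claim $u|_\caA\in A$ follows because the Chevalley generators $H_\alpha$, $E_\alpha$, $E_{-\alpha}$ ($\alpha\in\Delta$) act on $\caA$ by elements of $A$ by (i)--(iii), $A$ is closed under composition, and the action \eqref{eqenv1} is by definition an algebra homomorphism $U(\frg)\to\End\caA$. The main technical content has already been absorbed into the two preceding lemmas, so no new obstacle arises here; the only thing to double-check is the bookkeeping of indices $\Gamma_s$ versus $\Gamma$ in (ii)--(iii), i.e., that the terms produced by repeated commutators in Lemmas \ref{basic lem2} and \ref{basic lem3} indeed have no $x_\alpha$ with $\alpha\in\Delta$ in the $\eta$-case (which follows because each commutator $[E_{-\beta_i},E_{-\beta}]$ produces a root vector of height strictly greater than $\htt\beta$, hence never simple when $\beta$ itself is non-simple, and the simple case is handled trivially since $\eta(\beta,i)=x_\beta$ for $\beta$ of maximal height).
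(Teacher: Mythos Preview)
Your treatment of parts (i) and (ii) matches the paper's proof exactly: the weight computation for $\zeta_\alpha$ and the identification $\eta_\beta=\eta(\beta,0)$ are precisely what the paper does.

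However, there is a genuine gap in your argument for part (iii). Lemma~\ref{basic lem3} is stated and proved only for $\beta\in\Delta$, not for arbitrary $\beta\in\Phi^+$; its proof uses this restriction in an essential way (for instance, when $\beta=\beta_i$ the commutator $[E_\beta,E_{-\beta_i}^{a_i}]$ involves $H_\beta$, and the argument that $k\beta_i-\beta\in\Phi$ forces $k\beta_i-\beta\in\Phi^+$ relies on $\htt\beta=1$). So your appeal to the lemma with $i=0$ only yields the formula $d_\beta=D_\beta+A^{-1}$ for simple $\beta$. Your closing paragraph does give $d_\beta\in A$ for all $\beta$ via the Chevalley generators, but that is strictly weaker than what (iii) asserts: one must identify the leading (degree-zero) part of $d_\beta$ as the specific operator $D_\beta$.

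The paper fills this gap by a direct computation showing that the operators $D_\beta$ satisfy the same commutation relations as the root vectors, namely $[D_{\beta_i},D_{\beta_j}]=N_{\beta_i,\beta_j}D_{\beta_i+\beta_j}$ whenever $\beta_i+\beta_j\in\Phi$, via the Jacobi identity on the structure constants $N$. Since $[d_{\beta_i},d_{\beta_j}]=N_{\beta_i,\beta_j}d_{\beta_i+\beta_j}$ and the filtration satisfies $[A^0,A^{-1}]\subset A^{-1}$, an induction on $\htt\beta$ then propagates the statement $d_\beta-D_\beta\in A^{-1}$ from simple roots to all of $\Phi^+$. You need to add this step.
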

\begin{proof}
(\rmnum{1}) Recall that
\[
H_\alpha(E^av_\lambda)=\left(\langle\lambda,\alpha^\vee\rangle-\sum_{\beta\in\Phi^+}\langle\beta,\alpha^\vee\rangle a_\beta\right)E^av_\lambda.
\]
The statement follows immediately.

(\rmnum{2}) Note that $E_{-\beta}|_\caA=\eta(\beta, 0)$ and thus the statement follows from  Lemma \ref{basic lem2}.

(\rmnum{3}) If $\beta\in\Delta$, the statement is a consequence of Lemma \ref{basic lem3} since $E_{\beta}|_\caA=d(\beta, 0)$. It remains to consider the general case, with $\beta\in\Phi^+$ arbitrary. Given $i,j\in\{1,2,\ldots,m\}$, if $\beta_i+\beta_j\in\Phi$, then
\[
\begin{aligned}
\left[D_{\beta_i},D_{\beta_j}\right]=&\left[\sum_{\gamma_1,\gamma_1-\beta_i\in\Phi^+}N_{\beta_i,-\gamma_1}x_{\gamma_1-\beta_i}\pt_{\gamma_1},
\sum_{\gamma_2,\gamma_2-\beta_j\in\Phi^+}N_{\beta_j,-\gamma_2}x_{\gamma_2-\beta_j}\pt_{\gamma_2}\right]\\
=&\sum_{\gamma_2,\gamma_1-\beta_i,\gamma_1=\gamma_2-\beta_j\in\Phi^+}
N_{\beta_i,-\gamma_1}N_{\beta_j,-\gamma_2}x_{\gamma_1-\beta_i}\pt_{\gamma_2}\\
&-\sum_{\gamma_1,\gamma_2-\beta_j,\gamma_2=\gamma_1-\beta_i\in\Phi^+}
N_{\beta_j,-\gamma_2}N_{\beta_i,-\gamma_1}x_{\gamma_2-\beta_j}\pt_{\gamma_1}\\
=&\sum_{\gamma,\gamma-\beta_i-\beta_j\in\Phi^+}
(N_{\beta_i,\beta_j-\gamma}N_{\beta_j,-\gamma}-N_{\beta_j,\beta_i-\gamma}N_{\beta_i,-\gamma})
x_{\gamma-\beta_i-\beta_j}\pt_{\gamma}\\
=&N_{\beta_i,\beta_j}\sum_{\gamma,\gamma-\beta_i-\beta_j\in\Phi^+}
N_{\beta_i+\beta_j,-\gamma}x_{\gamma-\beta_i-\beta_j}\pt_\gamma\\
=&N_{\beta_i,\beta_j}D_{\beta_i+\beta_j}.
\end{aligned}
\]
Here $N_{\beta_i,\beta_j-\gamma}=0$ if $\beta_j-\gamma\not\in\Phi$ and $N_{\beta_j,\beta_i-\gamma}=0$ if $\beta_i-\gamma\not\in\Phi$. The second last equation follows from the Jacobi identity. In view of the fact that
\[
[E_{\beta_i}, E_{\beta_j}]=N_{\beta_i,\beta_j}E_{(\beta_i+\beta_j)},
\]
we can prove the asserted result by induction on $\htt\beta$.
\end{proof}

Define the commutator of two differential operator $d$ and $\bar d$ by
\[
[d,\bar d]=d\bar d-\bar d d.
\]
Recall that the map $\iota:\Phi^+\ra \{1,2,\ldots,m\}$ determines an ordering of $\Phi^+$. It is easy to see that the formulas of $\zeta_\alpha$, $\eta_\beta$ and $d_\beta$ depend on $\iota$.

\begin{definition}\label{gd order}
We say that $\iota$ is a {\it good ordering} of $\Phi^+$ if $x_\alpha$ and $\eta_\alpha$ commute for all $\alpha\in\Delta$, that is,
\[
[x_\alpha,\eta_\alpha]=0.
\]
\end{definition}

Let $\alpha_1, \alpha_2, \ldots, \alpha_n$ be all the simple roots in $\Phi^+$ (i.e., $\Delta=\{\alpha_1, \alpha_2, \ldots, \alpha_n\}$). If $\beta, \gamma\in\Phi^+$, then $\beta-\gamma=\sum_{i=1}^nc_i\alpha_i$ with $c_i\in\bbZ$. We write $\beta>\gamma$ if there exists an index $k\in\{1,2,\ldots,n\}$ such that $c_1=c_2=\ldots=c_{k-1}=0$ and $c_k>0$. This defines a lexicographic ordering $\beta_1>\beta_2>\ldots>\beta_m$ on $\Phi^+$ (with $\alpha_1>\alpha_2>\ldots>\alpha_n$).

\begin{lemma}[The existence of good ordering]\label{gd order2}
The above lexicographic ordering is a good ordering of $\Phi^+$.
\end{lemma}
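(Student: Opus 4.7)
The goal is to show $[x_\alpha,\eta_\alpha]=0$ for every $\alpha\in\Delta$. Since $x_\alpha$ commutes with every $x_\beta$ and with every $\pt_\beta$ for $\beta\neq\alpha$, this is equivalent to the statement that $\eta_\alpha=E_{-\alpha}|_\caA$, when expressed in the Weyl algebra $A$, carries no factor of $\pt_\alpha$. My plan is to establish this by exploiting a block decomposition of $\bar\frn$ induced by the lex order.

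Fix $\alpha\in\Delta$ and set $S_\alpha:=\{\gamma\in\Phi^+:\gamma>\alpha\}$. The key step is a closure property: if $\gamma_1\in S_\alpha\cup\{\alpha\}$, $\gamma_2\in\Phi^+$, and $\gamma_1+\gamma_2\in\Phi^+$, then $\gamma_1+\gamma_2\in S_\alpha$. This follows by inspecting simple-root expansions: in $(\gamma_1+\gamma_2)-\alpha=(\gamma_1-\alpha)+\gamma_2$, the first nonzero coefficient of $\gamma_1-\alpha$ is positive (or $0$ when $\gamma_1=\alpha$) and the first nonzero coefficient of $\gamma_2\in\Phi^+$ is positive, so the sum also has positive first nonzero coefficient. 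A parallel computation shows that $\{\gamma\in\Phi^+:\gamma<\alpha\}$ is closed under those additions that remain in $\Phi^+$. Hence
\[
\bar\frn_{>\alpha}:=\bigoplus_{\gamma\in S_\alpha}\bbC E_{-\gamma},\qquad \bar\frn_{<\alpha}:=\bigoplus_{\gamma<\alpha}\bbC E_{-\gamma}
\]
are, respectively, a Lie ideal of $\bar\frn$ and a Lie subalgebra, with $\bar\frn=\bar\frn_{>\alpha}\oplus\bbC E_{-\alpha}\oplus\bar\frn_{<\alpha}$.

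Because the lex order lists every root of $S_\alpha$ before $\alpha$ and every other positive root after $\alpha$, every PBW monomial factors as $E^a=L\,E_{-\alpha}^{a_\alpha}R$ with $L\in U(\bar\frn_{>\alpha})$ and $R\in U(\bar\frn_{<\alpha})$. I would then compute
\[
E_{-\alpha}\cdot E^a v_\lambda \;=\; L\,E_{-\alpha}^{a_\alpha+1}R\,v_\lambda \;+\; [E_{-\alpha},L]\,E_{-\alpha}^{a_\alpha}R\,v_\lambda,
\]
where $[E_{-\alpha},L]\in U(\bar\frn_{>\alpha})$ by the ideal property. The subsequent PBW normalization of $[E_{-\alpha},L]$ is carried out entirely inside $U(\bar\frn_{>\alpha})$, again by the closure property, so the central factor $E_{-\alpha}^{a_\alpha}$ is never disturbed. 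Consequently every PBW monomial $E^b v_\lambda$ appearing in the expansion of $E_{-\alpha}E^a v_\lambda$ satisfies $b_\alpha\in\{a_\alpha,a_\alpha+1\}$, and under $\tau$ the dependence of $\eta_\alpha(x^a)$ on $a_\alpha$ is only through the multiplicative factor $x_\alpha^{a_\alpha}$. Thus $\eta_\alpha$ contains no $\pt_\alpha$ term, giving $[x_\alpha,\eta_\alpha]=0$.

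The main obstacle is the closure property for $S_\alpha$; once it is in place the rest is PBW bookkeeping. The point to watch is the inductive PBW normalization of $[E_{-\alpha},L]$, which generates iterated brackets among elements of $\bar\frn_{>\alpha}$; one must confirm by induction that each such bracket remains in $\bar\frn_{>\alpha}$, which is precisely what the closure property guarantees.
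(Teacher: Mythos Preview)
Your argument is correct and takes a genuinely different route from the paper's. The paper proves the stronger auxiliary claim that $[x_j,\eta(\beta,i)]=0$ whenever $j\geq\max\{\iota(\beta),i+1\}$, by downward induction on $\htt\beta$ combined with a subsidiary induction on $i$, using the recursion $\eta(\beta,i)=\eta(\beta,i-1)+\sum_k c_k\,\eta(\beta+k\beta_i,i-1)\pt_i^k$ derived in Lemma~\ref{basic lem2}; the lemma is then the special case $i=0$, $\beta=\alpha\in\Delta$, $j=\iota(\alpha)$. You instead isolate the structural reason: for $\alpha=\alpha_{i_0}$, the condition $\gamma>\alpha$ is equivalent to $\gamma$ having a nonzero coefficient on some $\alpha_j$ with $j\leq i_0$, so $\bar\frn_{>\alpha}$ is visibly an ideal of $\bar\frn$, and the PBW monomial factors as $L\,E_{-\alpha}^{a_\alpha}R$. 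Writing $E_{-\alpha}L=LE_{-\alpha}+[E_{-\alpha},L]$ with $[E_{-\alpha},L]\in U(\bar\frn_{>\alpha})$, the straightening of the correction term takes place entirely inside $U(\bar\frn_{>\alpha})$ and never disturbs $E_{-\alpha}^{a_\alpha}$. This is shorter and more conceptual for the statement at hand; the paper's nested induction, by contrast, yields a commutation result for \emph{all} $\eta(\beta,i)$, not just the simple-root case.

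One small remark: your ``parallel computation'' for the closure of $\{\gamma<\alpha\}$ is not literally parallel---adding a vector with negative leading coefficient to one with nonnegative entries need not preserve the sign. The correct (and simpler) reason is that $\gamma<\alpha_{i_0}$ holds exactly when $\gamma$ is supported on $\{\alpha_{i_0+1},\ldots,\alpha_n\}$, which is obviously closed under addition. In fact this closure is not even needed for your argument: $R$ is already a PBW monomial in the tail variables and is never touched by the commutator, so only the ideal property of $\bar\frn_{>\alpha}$ is doing real work.
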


\begin{proof}
We still denote the above lexicographic ordering by $\iota$. Then we have $\iota(\beta)<\iota(\gamma)$ if and only if $\beta>\gamma$ for $\beta, \gamma\in\Phi^+$. Fix $\beta\in\Phi^+$ and $i\in\{0,1,\ldots,m\}$. We claim that $$[x_j,\eta(\beta,i)]=0$$ for $j\geq\max\{\iota(\beta), i+1\}$. The lemma is an immediate consequence of this claim. Indeed, set $i=0$ and $j=\iota(\beta)\geq\max\{\iota(\beta), 0+1\}$. Then $x_{j}=x_\beta$ commutes with $\eta(\beta,0)=\eta_\beta$.

As in Lemma \ref{basic lem2}, we use downward induction on $\htt\beta$ to prove the claim. If $\beta\in\Phi^+$ has the largest possible height, then $\eta(\beta,i)=x_{\beta}$ for $i=0,1,\ldots,m$ and there is nothing to prove. Now consider arbitrary $\beta\in\Phi^+$. For the induction step, suppose that the claim is true for any $\beta'\in\Phi^+$ with $\htt\beta'>\htt\beta$. If $i=\iota(\beta)$ or $\iota(\beta)-1$ (i.e., $\beta=\beta_i$ or $\beta_{i+1}$), then $\eta(\beta, i)=x_\beta$ and there is also nothing to prove. If $i=\iota(\beta)+1$. Recall the equation (\ref{eta1}):
\begin{equation*}
\begin{aligned}
\eta(\beta, i)=&\eta(\beta, i-1)+c_{1}\eta(\beta+\beta_i, i-1)\pt_i\\
&+c_{2}\eta(\beta+2\beta_i, i-1)\pt_i^2+c_{3}\eta(\beta+3\beta_i, i-1)\pt_i^3.
\end{aligned}
\end{equation*}
Assume that $j\geq\max\{\iota(\beta), i+1\}=i+1$. It follows immediately that $[
x_j,\pt_i]=0$. If $\beta+k\beta_i\in\Phi^+$ ($k=1,2$ or $3$), then $\beta+k\beta_i\geq\beta$ and $\iota(\beta+k\beta_i)\leq\iota(\beta)=i-1$. So $j\geq i+1>\max\{\iota(\beta+k\beta_i), (i-1)+1\}$. The induction hypothesis can be applied, yielding $[x_j, \eta(\beta+k\beta_i,i-1)]=0$ and thus $[x_j, \eta(\beta,i)]=0$. Similarly, we can prove the claim for $i=\iota(\beta)+2,\ldots, m$ by a subsidiary induction on $i$, starting with $i=\iota(\beta)+1$. With (\ref{eta1}) in hand, we can also prove the claim for $i=\iota(\beta)-2,\ldots,1,0$ by a subsidiary downward induction on $i$, starting with $i=\iota(\beta)-1$ (keeping in mind that $\iota(\beta)=\max\{\iota(\beta), i+1\}$ in this case). In fact, assume that $i\leq\iota(\beta)-1$. If $[x_{j}, \eta(\beta, i)]=0$ for $j\geq \iota(\beta)$, rewrite (\ref{eta1}) as
\begin{equation*}
\begin{aligned}
\eta(\beta, i-1)=&\eta(\beta, i)-c_{1}\eta(\beta+\beta_i, i-1)\pt_i\\
&-c_{2}\eta(\beta+2\beta_i, i-1)\pt_i^2-c_{3}\eta(\beta+3\beta_i, i-1)\pt_i^3.
\end{aligned}
\end{equation*}
Since $j\geq \iota(\beta)>i$, one has $[x_j,\pt_i]=0$ and $j\geq\max\{\iota(\beta+k\beta_i), (i-1)+1\}$ for $k=1, 2$ or $3$. The induction hypothesis yields $[x_j, \eta(\beta+k\beta_i,i-1)]=0$ and thus $[x_j, \eta(\beta,i-1)]=0$.
\end{proof}

\subsection{Differential equations and truncated-up formal power series}

From now on, assume that $\iota$ is always a good ordering. We say that a weight vector $v\in M_\lambda$ is a {\it singular vector} if $\frn\cdot v=0.$ As a consequence of the definition, we have:

\begin{prop}
A weight vector $v\in M_\lambda$ is a singular vector if and only if
\[
d_{\alpha}(\tau(v))=0
\]
for all $\alpha\in\Delta$.
\end{prop}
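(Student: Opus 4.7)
The plan is to establish the two implications separately, both via the defining property \eqref{eqenv1} that identifies the $U(\mathfrak{g})$-action on $\caA$ with the original action on $M_\lambda$, and the standard fact that $\mathfrak{n}$ is generated as a Lie algebra by the simple root vectors $\{E_\alpha : \alpha\in\Delta\}$.

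For the forward direction, suppose $v\in M_\lambda$ is singular. Then $\mathfrak{n}\cdot v=0$, so in particular $E_\alpha\cdot v=0$ for every simple root $\alpha$. Applying $\tau$ and using \eqref{eqenv1}, I get $d_\alpha(\tau(v))=\tau(E_\alpha\cdot v)=\tau(0)=0$ for all $\alpha\in\Delta$. This direction is immediate.

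For the converse, suppose $d_\alpha(\tau(v))=0$ for every $\alpha\in\Delta$. Translating back via $\tau$, this says $E_\alpha\cdot v=0$ for all simple roots $\alpha$. I would then argue by induction on $\htt\beta$ that $E_\beta\cdot v=0$ for every $\beta\in\Phi^+$. The base case $\htt\beta=1$ is precisely the hypothesis. For the inductive step, given $\beta\in\Phi^+$ with $\htt\beta\geq 2$, it is standard that one can write $\beta=\alpha+\gamma$ with $\alpha\in\Delta$ and $\gamma\in\Phi^+$ (with $\htt\gamma=\htt\beta-1$). By Theorem \ref{Chavelley basis}, $[E_\alpha,E_\gamma]=N_{\alpha,\gamma}E_\beta$ with $N_{\alpha,\gamma}\neq 0$. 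Hence
\[
N_{\alpha,\gamma}\,E_\beta\cdot v = E_\alpha E_\gamma\cdot v - E_\gamma E_\alpha\cdot v = E_\alpha\cdot 0 - E_\gamma\cdot 0 = 0,
\]
where I used the inductive hypothesis on $\gamma$ and the base case on $\alpha$. Dividing by $N_{\alpha,\gamma}$ gives $E_\beta\cdot v=0$, completing the induction. Thus $\mathfrak{n}\cdot v=0$ and $v$ is singular.

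There is essentially no obstacle here: the statement is a formal consequence of the definitions once one records that $\mathfrak{n}$ is Lie-generated by $\{E_\alpha : \alpha\in\Delta\}$ and that every non-simple positive root admits a decomposition $\beta=\alpha+\gamma$ with $\alpha$ simple and $\gamma$ positive. The only point worth remarking is that the weight-vector hypothesis on $v$ is never actually used in the argument; what matters is purely that $\tau$ intertwines the two actions and that $d_\alpha=E_\alpha|_\caA$.
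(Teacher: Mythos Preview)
Your proof is correct and follows the same reasoning the paper has in mind: the paper simply states the proposition ``as a consequence of the definition'' without further argument, and your write-up supplies exactly the routine details (that $\tau$ intertwines the actions and that $\frn$ is Lie-generated by the simple root vectors) which make this immediate.
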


\begin{definition}\label{pde for singular}
We can define {\it the system of partial differential equations for singular vectors of $M_\lambda$} by
\begin{equation}\label{eqsin1}
d_\alpha(f)=0
\end{equation}
for all $\alpha\in\Delta$ and unknown function $f$ in $\{x_{\beta}\ |\ \beta\in\Phi^+\}$. If there is $\mu\in\frh^*$ such that $\zeta_\alpha(f)=\mu(H_\alpha)f=\langle\mu,\alpha^\vee\rangle f$ for all $\alpha\in\Delta$, we say that $f$ is {\it weighted}, with weight $\mu$.
\end{definition}

It follows immediately from the definition that the constant polynomial $1$ is a weighted solution, with weight $\lambda$.
If $f$ is a weighted polynomial solution of (\ref{eqsin1}), then $\tau^{-1}(f)$ is a singular vector of $M_\lambda$ and vice versa. In order to solve the system (\ref{eqsin1}), we need a proper space of functions. Define the polynomial algebra
\[
\caA_0:=\bbC\left[x_{\beta}\ |\ \beta\in\Phi^+\backslash \Delta\right]
\]
with basis
\[
\{x^a:=\prod_{i=1}^mx_i^{a_i}\ |\ a\in\Gamma_s\}.
\]
Define monomials
\[
x^{\vec{z}}:=\prod_{i=1}^{n}x_{\alpha_i}^{z_i}
\]
for $\vec{z}=(z_1,z_2,\ldots,z_{n})\in\bbC^{n}$. Let
\[
\caA_1:=\left\{f=\sum_{\vec{j}\in\bbN^{n}}\left.\sum_{i=0}^pf_{\vec{z}^i-\vec{j}}x^{\vec{z}^i-\vec{j}}\ \right|\ p\in\bbN, \vec{z}^i\in\bbC^{n}, f_{\vec{z}^i-\vec{j}}\in\caA_0\right\}
\]
be the space of truncated-up formal power series in $\{x_{\alpha_1},x_{\alpha_2}\ldots,x_{\alpha_n}\}$ over $\caA_0$. Evidently $\caA_1$ contains $\caA$ and is invariant under the action of $\{x_\beta,\pt_\beta\ |\ \beta\in\Phi^+\}$. Hence it is invariant under the action of $\{\zeta_\alpha,d_\beta,\eta_\beta\ |\ \alpha\in\Delta, \beta\in\Phi^+\}$ by Proposition \ref{prop1}.

By Proposition \ref{prop1} (\rmnum{2}), we can define differential operators
\begin{equation}\label{eqet2}
\begin{aligned}
\eta_\alpha^c=&\left(x_{\alpha}+\sum_{a\in\Gamma_s,b\in\Gamma, |a|\leq |b|}C_{a,b}^{\alpha} x^a\pt^b\right)^c\\
=&\sum_{p=0}^\infty\frac{\langle c\rangle_p}{p!}x_{\alpha}^{c-p}\left(\sum_{a\in\Gamma_s,b\in\Gamma, |a|\leq |b|}C_{a,b}^{\alpha} x^a\pt^b\right)^p
\end{aligned}
\end{equation}
on $\caA_1$ for $\alpha\in\Delta$ and $c\in\bbC$. Here $\langle c\rangle_p=c(c-1)\ldots(c-p+1)$. Since $x_{\alpha}$ and $\eta_\alpha-x_\alpha$ commute for any good ordering $\iota$, we get
\[
\eta_\alpha^{c_1}\eta_\alpha^{c_2}=\eta_\alpha^{c_1+c_2}\qquad\mbox{for}\ c_1,c_2\in\bbC.
\]

\begin{lemma}\label{eqcom1}
If $\alpha,\beta\in\Delta$ and $c\in \bbC$, then
\[
[d_{\beta},\eta_{\alpha}^c]=c\delta_{\alpha,\beta}\eta_{\alpha}^{c-1}(1-c+\zeta_{\alpha})\quad\mbox{and}\quad
[\zeta_{\beta},\eta_{\alpha}^{c}]=-c\langle\alpha,\beta^\vee\rangle\eta_{\alpha}^c.
\]
Here $\delta_{\alpha,\beta}=0$ unless $\alpha=\beta$. In particular, $\delta_{\alpha,\alpha}=1$.
\end{lemma}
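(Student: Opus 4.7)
I treat the two identities in turn.

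\emph{The weight identity.} Under the adjoint action $[\zeta_\beta,\,\cdot\,]$ on $\End(\caA_1)$, both the multiplication operator $x_\alpha$ and the tail $Y_\alpha:=\eta_\alpha-x_\alpha$ carry weight $-\alpha$: for $x_\alpha$ this is an immediate check from Proposition~\ref{prop1}(i); for $\eta_\alpha$ (and hence for $Y_\alpha$) it follows because the representation $U(\frg)\to\End(\caA)$ respects $\frh$-weights, so $[\zeta_\beta,\eta_\alpha]=-\langle\alpha,\beta^\vee\rangle\eta_\alpha$. The good-ordering hypothesis makes $x_\alpha$ and $Y_\alpha$ commute, so $x_\alpha^{c-p}Y_\alpha^p$ is a well-defined operator of weight $-c\alpha$ for every $c\in\bbC$ and $p\in\bbN$. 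A direct Leibniz computation gives $[\zeta_\beta,x_\alpha^{c-p}Y_\alpha^p]=-c\langle\alpha,\beta^\vee\rangle\, x_\alpha^{c-p}Y_\alpha^p$, and summing against $\binom{c}{p}$ via \eqref{eqet2} immediately yields the second identity.

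\emph{The $d_\beta$-commutator at integer exponents.} In $U(\frg)$ a standard $\frsl_2$-computation, combined with $[E_\beta,E_{-\alpha}]=\delta_{\alpha,\beta}H_\alpha$ (for distinct $\alpha,\beta\in\Delta$ one has $\beta-\alpha\notin\Phi$), gives
\[
[E_\beta,E_{-\alpha}^n]=n\delta_{\alpha,\beta}\,E_{-\alpha}^{n-1}(H_\alpha-n+1)\qquad(n\in\bbN).
\]
Applying the representation $U(\frg)\to\End(\caA)$, which sends $E_\beta\mapsto d_\beta$, $E_{-\alpha}\mapsto\eta_\alpha$, $H_\alpha\mapsto\zeta_\alpha$ and preserves algebra structure, yields the first identity at $c=n$.

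\emph{Passage from $\bbN$ to $\bbC$.} For any $f\in\caA_1$ the expansion \eqref{eqet2} gives
\[
\eta_\alpha^c f=x_\alpha^c\sum_{p\ge 0}\binom{c}{p}\bigl(x_\alpha^{-p}Y_\alpha^p f\bigr),
\]
in which each summand $x_\alpha^{-p}Y_\alpha^p f\in\caA_1$ is independent of $c$ and $\binom{c}{p}$ is a polynomial in $c$ of degree $p$. Once expanded in a monomial basis of $\caA_1$, the coefficient of each fixed monomial in $\eta_\alpha^c f$ collects contributions from only finitely many values of $p$, and so (up to the common factor $x_\alpha^c$) is a polynomial in $c$. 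The same structure is inherited by $d_\beta\eta_\alpha^c f$, $\eta_\alpha^c d_\beta f$ and $\eta_\alpha^{c-1}(1-c+\zeta_\alpha)f$. Thus the first identity, tested against each monomial, reduces to a family of polynomial identities in $c$; since these polynomial identities already hold for the infinite set $c\in\bbN$, they extend to all $c\in\bbC$.

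\textbf{Main obstacle.} The delicate step is the finiteness assertion used in the third paragraph: one must verify that only finitely many $p$ contribute to each monomial coefficient in the formal expansion of $\eta_\alpha^c f$. This amounts to tracking how $Y_\alpha^p$ redistributes exponents between the simple and non-simple variables of $\caA_1$, and is where the explicit description of $Y_\alpha$ from Proposition~\ref{prop1}(ii) and the good-ordering hypothesis are essential.
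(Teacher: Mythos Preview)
Your approach is essentially the same as the paper's: both establish the identities first for $c\in\bbN$ via the $U(\frg)$ relation $[E_\beta,E_{-\alpha}^k]=k\delta_{\alpha,\beta}E_{-\alpha}^{k-1}(1-k+H_\alpha)$, then extend to all $c\in\bbC$ by observing that the discrepancy depends polynomially on $c$. The differences are organizational. For the weight identity you give a direct argument (both $x_\alpha$ and $Y_\alpha$ carry weight $-\alpha$, so each summand $x_\alpha^{c-p}Y_\alpha^p$ does too), which is slightly slicker than the paper's uniform treatment of both identities by the interpolation trick. Conversely, for the finiteness step you flag as the ``main obstacle,'' the paper works at the operator level rather than against test functions: it introduces a class $S_{\alpha,c}$ of operators of the form $\sum_p x_\alpha^{c-p}D_p$ with $D_p\in A$ having coefficients polynomial in $c$, checks that $S_{\alpha,c}$ is an $A$-bimodule, and observes that $\eta_\alpha^c$, $\eta_\alpha^{c-1}$, $d_\beta$, $\zeta_\alpha$ all lie in or act on this class. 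This packaging makes the polynomiality of each coefficient automatic and sidesteps the monomial-by-monomial bookkeeping you would otherwise have to carry out.
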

\begin{proof}
Fix $\alpha\in\Delta$ and set $\Gamma_\alpha:=\left\{a\in\Gamma\ |\ a_\alpha=0\right\}\supset\Gamma_s$. Let $\bbD(\caA_1)$ be the algebra of differential operators on $\caA_1$. Denote
\[
S_{\alpha, c}:=\left\{D=\sum_{p\in\bbZ}x_\alpha^{c-p}D_p\in\bbD(\caA_1)\ \left|\ D_p=\sum_{a\in\Gamma_\alpha,b\in\Gamma}P_{p,a,b}(c)x^{a}\pt^b\in A\right.\right\}.
\]
For any $D\in S_{\alpha, c}$, the set $\{p<0\ |\ D_p\neq0\}$ is finite and $P_{p,a,b}(c)$ are polynomials in $c$. It is not difficult to verify that $S_{\alpha, c}$ is a bimodule of the Weyl algebra $A$.

It follows from (\ref{eqet2}) that $\eta_{\alpha}^{c}, \eta_{\alpha}^{c-1}\in S_{\alpha,c}$. Moreover, we have $d_{\beta}\eta_{\alpha}^c$, $\eta_{\alpha}^c d_{\beta}$, $\eta_{\alpha}^{c-1}(1-c+\zeta_{\alpha})\in S_{\alpha,c}$ since $d_\beta, \zeta_\alpha\in A$. Now we can assume that
\[
[d_{\beta},\eta_{\alpha}^c]-c\delta_{\alpha,\beta}\eta_{\alpha}^{c-1}(1-c+\zeta_{\alpha})=\sum_{p\in\bbZ} x_{\alpha}^{c-p}\sum_{a\in\Gamma_\alpha,b\in\Gamma}Q_{p,a,b}(c)x^{a}\pt^b,
\]
where $Q_{p,a,b}(c)$ are polynomials in $c$. Note that
\[
[E_{\beta}, E_{-\alpha}^k]=k\delta_{\alpha,\beta}E_{-\alpha}^{k-1}(1-k+H_{\alpha})\quad\mbox{for}\ k\in\bbN.
\]
Therefore $Q_{p,a,b}(k)=0$ for $k\in\bbN$. But a nonzero polynomial has only finitely many roots. So $Q_{p,a,b}(c)\equiv0$ and
\[
[d_{\beta},\eta_{\alpha}^c]=c\delta_{\alpha,\beta}\eta_{\alpha}^{c-1}(1-c+\zeta_{\alpha})\quad\mbox{for}\ c\in\bbC.
\]
In a similar spirit, the equation
\[
[H_{\beta}, E_{-\alpha}^k]=-k\langle\alpha,\beta^\vee\rangle E_{-\alpha}^k\quad\mbox{for}\ k\in\bbN
\]
implies
\[
[\zeta_{\beta},\eta_{\alpha}^{c}]=-c\langle\alpha,\beta^\vee\rangle\eta_{\alpha}^c\quad\mbox{for}\ c\in\bbC.
\]
\end{proof}

%
%
\section{Differential-operator representations of $W$}
%
%
In this section, we show that $\caA_1$ is a differential-operator representation of the Weyl group $W$.

\subsection{Degree and leading term}

\begin{lemma}\label{eqwt1}
Let $f\in\caA_1$ be a weighted function with weight $\mu=\lambda-\sum_{i=1}^nz_i\alpha_i$. Then $f$ can be written as
\[
f=\sum_{\vec{j}\in\bbN^{n}}f_{\vec{z}-\vec{j}}x^{\vec{z}-\vec{j}},
\]
where $\vec{z}=(z_1,z_2,\ldots,z_{n})$ and $f_{\vec{z}-\vec{j}}\in\caA_0$. Moreover, if $f_{\vec{z}-\vec{j}}\neq0$ for $\vec{j}=(j_1,j_2,\cdots,j_n)$, then
\[
\sum_{i=1}^nj_i-\deg(f_{\vec{z}^i-\vec{j}})\in\bbN.
\]
\end{lemma}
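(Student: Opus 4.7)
The plan is to decompose $f$ in the natural monomial basis of $\caA_1$ and use the fact that $\zeta_\alpha$ acts diagonally on this basis. By Proposition \ref{prop1}(i),
\[
\zeta_\alpha = \langle \lambda, \alpha^\vee \rangle - \sum_{\beta \in \Phi^+} \langle \beta, \alpha^\vee \rangle x_\beta \pt_\beta,
\]
so $\zeta_\alpha$ acts on a monomial $g \cdot x^{\vec{v}}$, with $g = \prod_{\gamma \in \Phi^+ \setminus \Delta} x_\gamma^{b_\gamma}$ ($b_\gamma \in \bbN$) and $\vec{v} \in \bbC^n$, by the scalar
\[
\langle \lambda, \alpha^\vee \rangle - \sum_{i=1}^n v_i \langle \alpha_i, \alpha^\vee \rangle - \sum_{\gamma \in \Phi^+\setminus\Delta} b_\gamma \langle \gamma, \alpha^\vee \rangle.
\]
Since distinct such monomials are linearly independent in $\caA_1$, the weight hypothesis $\zeta_\alpha(f) = \langle \mu, \alpha^\vee \rangle f$ for all $\alpha \in \Delta$ forces every monomial occurring in $f$ to satisfy the corresponding eigenvalue equation individually.

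Writing $\gamma = \sum_k c_{\gamma,k} \alpha_k$ with $c_{\gamma,k} \in \bbN$ for each $\gamma \in \Phi^+ \setminus \Delta$, and using nonsingularity of the Cartan matrix $(\langle \alpha_i, \alpha_j^\vee \rangle)$, these equations collapse to
\[
v_i = z_i - \sum_{\gamma \in \Phi^+\setminus\Delta} b_\gamma \, c_{\gamma, i}, \qquad i = 1, \ldots, n.
\]
Setting $j_i := \sum_\gamma b_\gamma c_{\gamma, i}$ gives $\vec{j} \in \bbN^n$ and $\vec{v} = \vec{z} - \vec{j}$. Grouping monomials sharing the same $\vec{j}$ then yields the claimed form $f = \sum_{\vec{j} \in \bbN^n} f_{\vec{z} - \vec{j}} \, x^{\vec{z} - \vec{j}}$ with $f_{\vec{z} - \vec{j}} \in \caA_0$; the key point is that a single $\vec{z}$ suffices, even though the ambient definition of $\caA_1$ allowed a finite family $\vec{z}^0, \ldots, \vec{z}^p$.

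For the degree inequality, each monomial $g = \prod_\gamma x_\gamma^{b_\gamma}$ contributing to $f_{\vec{z} - \vec{j}}$ satisfies
\[
\sum_{i=1}^n j_i \;=\; \sum_{\gamma \in \Phi^+\setminus\Delta} b_\gamma \sum_{i=1}^n c_{\gamma, i} \;=\; \sum_\gamma b_\gamma \cdot \htt(\gamma) \;\geq\; 2 \sum_\gamma b_\gamma \;=\; 2\deg(g),
\]
since $\htt(\gamma) \geq 2$ whenever $\gamma \in \Phi^+ \setminus \Delta$. Hence $\deg(f_{\vec{z} - \vec{j}}) \leq \tfrac{1}{2}\sum_i j_i \leq \sum_i j_i$, which is in fact stronger than the stated claim $\sum_i j_i - \deg(f_{\vec{z} - \vec{j}}) \in \bbN$. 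I do not anticipate a serious obstacle; the only point needing brief care is that the eigenvalue condition decouples across distinct basis monomials, which is immediate from the diagonal action of $\zeta_\alpha$ and the fact that $g \cdot x^{\vec{v}}$ for varying $(g, \vec{v})$ are linearly independent in $\caA_1$.
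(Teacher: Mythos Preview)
Your argument is correct and follows essentially the same route as the paper's proof: both compute the diagonal action of $\zeta_\alpha$ on monomials $g\cdot x^{\vec v}$, use linear independence of the simple roots (equivalently, nonsingularity of the Cartan matrix) to force $\vec v=\vec z-\vec j$ with $\vec j\in\bbN^n$, and then compare $\sum_i j_i$ with the height-weighted sum $\sum_\gamma b_\gamma\,\htt(\gamma)$. Your remark that $\htt(\gamma)\ge 2$ for $\gamma\in\Phi^+\setminus\Delta$ yields the sharper bound $\sum_i j_i\ge 2\deg(f_{\vec z-\vec j})$ is a pleasant strengthening, but the underlying computation is the same as the paper's identity $\sum_i j_i-\deg(f_{\vec z-\vec j})=\sum_i a_i(\htt\beta_i-1)$.
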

\begin{proof}
In view of Proposition \ref{prop1} (\rmnum{1}), we have $[\zeta_\alpha, x_\beta^c]=\langle-c\beta, \alpha^\vee\rangle x_\beta^c$ and $\zeta_\alpha(1)=\langle\lambda, \alpha^\vee\rangle$ for $\alpha\in\Delta$, $\beta\in\Phi^+$ and $c\in\bbC$. Let $x^ax^{\vec{z}'}$ be any nonzero term of $f$ (up to a coefficient) with $a\in\Gamma_s$ and $\vec{z}'=(z'_1,z'_2,\ldots,z'_{n})$. The weight of $x^ax^{\vec{z}'}$ is
\[
-\sum_{i=1}^m{a_i\beta_i}-\sum_{i=1}^n{z'_i}\alpha_i+\lambda=\mu=\lambda-\sum_{i=1}^nz_i\alpha_i.
\]
It follows that
\[
\sum_{i=1}^n{(z_i-z'_i)}\alpha_i=\sum_{i=1}^m{a_i\beta_i}.
\]
Then $\vec{z}-\vec{z}'\in\bbN^n$. Denote $\vec{j}=\vec{z}-\vec{z}'$. The first statement is evident.

If $f_{\vec{z}-\vec{j}}\neq0$, let $x^a(a\in\Gamma_s)$ be a nonzero term of $f_{\vec{z}-\vec{j}}$ (up to a coefficient) with the largest degree. Then $\deg(f_{\vec{z}-\vec{j}})=\deg(x^a)=\sum_{i=1}^ma_i$. Since $x^ax^{\vec{z}-\vec{j}}$ and $f$ have the same weights, we obtain
\[
-\sum_{i=1}^m{a_i\beta_i}-\sum_{i=1}^n(z_i-j_i)\alpha_i+\lambda=\mu=\lambda-\sum_{i=1}^nz_i\alpha_i.
\]
Considering the height of the above equation, we have
\[
\sum_{i=1}^nj_i=\sum_{i=1}^m{a_i\htt\beta_i}.
\]
Therefore,
\[
\sum_{i=1}^nj_i-\deg(f_{\vec{z}^i-\vec{j}})=\sum_{i=1}^m{a_i(\htt\beta-1)}\in\bbN.
\]
\end{proof}
We want to define the ``degree'' of a weighted $f\in\caA_1$ as in the case of polynomials. In the above setting, it is natural to define
\[
\deg(x^ax^{\vec{z}-\vec{j}})=\deg(x^a)+\sum_{i=1}^nz_i-\sum_{i=1}^nj_i.
\]
In a similar spirit, we can define {\it the degree} of a weighted $f\in\caA_1$ to be
\[
\deg(f):=\sum_{i=1}^nz_i-\min\left\{\left.\sum_{i=1}^nj_i-\deg(f_{\vec{z}^i-\vec{j}})\ \right|\ f_{\vec{z}^i-\vec{j}}\neq0\right\}.
\]
This is well-defined because of the above lemma. It is in a sense the highest degree of all the nonzero terms of $f$. Recall that we say a nonzero term of a polynomial $f$ is a {\it leading term} if has the largest possible degree $\deg(f)$. Similarly, we say a nonzero term of a weighted $f\in\caA_1$ is a leading term if its degree is $\deg(f)$. Denote by $T(f)$ the sum of all leading terms of $f$. The following lemma is an immediate consequence of Proposition \ref{prop1} and (\ref{eqet2}).

\begin{lemma}\label{eqwt2}
If $f\in\caA_1$ is weighted with weight $\mu$, then so is $T(f)$. Moreover,
\[
T(\eta_\alpha^c(f))=x_\alpha^c T(f)
\]
for $\alpha\in\Delta$ and $c\in\bbC$. If $D_\beta(T(f))\neq0$, then
\[
T(d_\beta(f))=D_\beta(T(f))
\]
for $\beta\in\Phi^+$.
\end{lemma}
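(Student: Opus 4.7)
The plan is to establish each of the three assertions by tracking how the relevant operators interact with the degree function defined just before the lemma.

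For the first assertion, I would invoke Lemma \ref{eqwt1} to write $f=\sum f_{\vec z-\vec j}\,x^{\vec z-\vec j}$ with $f_{\vec z-\vec j}\in\caA_0$, then expand each $f_{\vec z-\vec j}$ in the monomial basis $\{x^a\mid a\in\Gamma_s\}$. As observed in the proof of Lemma \ref{eqwt1}, every resulting monomial $x^a x^{\vec z-\vec j}$ is individually a weight vector of weight $\mu$. Since $T(f)$ is by definition the sum of those particular monomials that attain the maximal degree $\deg(f)$, it is a partial sum of weight-$\mu$ vectors, hence itself weighted with weight $\mu$.

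For the second assertion, I would use the expansion (\ref{eqet2}) of $\eta_\alpha^c$ as $\sum_{p\ge 0}\frac{\langle c\rangle_p}{p!}\,x_\alpha^{c-p}(\eta_\alpha-x_\alpha)^p$, which is legitimate because the good ordering forces $x_\alpha$ and $\eta_\alpha-x_\alpha$ to commute. The key observation is that $\eta_\alpha-x_\alpha\in A_s^0$ by Lemma \ref{basic lem2}, so each of its summands $C_{a,b}^\alpha x^a\pt^b$ satisfies $a\in\Gamma_s$ and $|a|\le|b|$. A direct monomial computation then shows that such an operator sends a monomial $x^{a'}x^{\vec z-\vec j}$ to a combination of monomials of degree $(|a|-|b|)+\deg(x^{a'}x^{\vec z-\vec j})\le\deg(x^{a'}x^{\vec z-\vec j})$, so $\deg((\eta_\alpha-x_\alpha)^p f)\le\deg(f)$. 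Multiplying by $x_\alpha^{c-p}$ shifts the degree by exactly $c-p$. Thus for $p\ge1$ the contribution has degree strictly below $c+\deg(f)$, while the $p=0$ summand is $x_\alpha^c f$ with degree precisely $c+\deg(f)$ and leading part $x_\alpha^c T(f)$. Since multiplication by $x_\alpha^c$ does not annihilate anything, one concludes $T(\eta_\alpha^c(f))=x_\alpha^c T(f)$.

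For the third assertion, I decompose $d_\beta=D_\beta+R_\beta$ as in Proposition \ref{prop1}(iii), where $R_\beta=\sum\tilde C_{a,b}^\beta x^a\pt^b$ with $|a|<|b|$ strictly. The same monomial computation as above shows that $R_\beta$ strictly lowers the degree, so $\deg(R_\beta(f))\le\deg(f)-1$. On the other hand, each summand $x_{\gamma-\beta}\pt_\gamma$ of $D_\beta$ has $|a|=|b|=1$ and in fact preserves the total exponent sum of each individual monomial, so $D_\beta$ preserves the degree exactly on each monomial. Writing $D_\beta(f)=D_\beta(T(f))+D_\beta(f-T(f))$, the second piece has degree $<\deg(f)$, whereas $D_\beta(T(f))$—under the nonvanishing hypothesis—consists solely of terms of degree $\deg(f)$. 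Combining with the bound on $R_\beta(f)$, the leading part of $d_\beta(f)$ is precisely $D_\beta(T(f))$. The main bookkeeping hurdle is the second assertion: one must verify carefully that the $\caA_1$-degree behaves as a filtration for operators in $A_s^0$ even when $c$ is an arbitrary complex parameter, which reduces to the explicit formula for how $x^a\pt^b$ acts on a monomial $x^{a'}x^{\vec z-\vec j}$ and yields the degree shift $|a|-|b|$.
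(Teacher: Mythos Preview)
Your proposal is correct and is precisely the detailed unpacking of what the paper leaves implicit: the paper's entire proof is the one-line remark that the lemma ``is an immediate consequence of Proposition \ref{prop1} and (\ref{eqet2}),'' and your three-part argument is exactly how one spells out that consequence. The only point worth flagging is cosmetic: in the second assertion you should note explicitly that even if $(\eta_\alpha-x_\alpha)^p$ merely preserves (rather than lowers) the degree, the factor $x_\alpha^{c-p}$ with $p\ge 1$ already forces the strict drop below $c+\deg(f)$---which you do implicitly, but it is the crux of why the $p=0$ term dominates.
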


Define a matrix  $A(\Phi)(x)=(a_{\beta,\gamma})_{m\times(m-n)}$, such that for $\beta\in\Phi^+,\gamma\in\Phi^+\backslash\Delta$,
\[
a_{\beta,\gamma}=\left\{\begin{aligned}
&N_{\beta,-\gamma}x_{\gamma-\beta}\qquad\mbox{if}\ \gamma-\beta\in\Phi^+\\
&0\ \qquad\qquad\qquad\mbox{otherwise}.
\end{aligned}
\right.
\]
Set $A(\Phi):=A(\Phi)(1,1,\cdots,1)$, that is, $x_\beta=1$ for all $\beta\in\Phi^+$. The following lemma will be useful.

\begin{lemma}\label{eqgra1}
The matrix $A(\Phi)$ has full rank $m-n$.
\end{lemma}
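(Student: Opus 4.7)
The plan is to filter $\ker A(\Phi)$ by height so as to isolate, at each level $h$, an injectivity problem for a single linear map $B_h$, and then to identify $B_h$ with the restriction to a graded piece of $\frn$ of the adjoint action of a principal nilpotent element. Concretely, suppose $c = (c_\gamma)_{\gamma \in \Phi^+ \setminus \Delta}$ is a nonzero element of $\ker A(\Phi)$, and let $h_0 := \max\{\htt \gamma : c_\gamma \neq 0\}$. For a row index $\beta \in \Phi^+$ of height exactly $h_0 - 1$, the row equation $\sum_{\gamma :\, \gamma - \beta \in \Phi^+} c_\gamma N_{\beta, -\gamma} = 0$ receives no contribution from $\gamma$ with $\htt \gamma > h_0$ (where $c_\gamma = 0$) nor from $\gamma$ with $\htt \gamma < h_0$ (where $\gamma - \beta$ fails to be a positive root); the surviving terms have $\htt \gamma = h_0$ and $\gamma - \beta \in \Delta$. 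Setting
\[
V_h := \bigoplus_{\gamma \in \Phi^+_h} \bbC E_\gamma, \qquad B_h : V_h \to V_{h-1},\qquad B_h(E_\gamma) := \sum_{\beta :\, \gamma - \beta \in \Delta} N_{\beta, -\gamma} E_\beta,
\]
these surviving equations assert $B_{h_0}\bigl((c_\gamma)_{\gamma \in \Phi^+_{h_0}}\bigr) = 0$, so if each $B_h$ ($h \ge 2$) has zero kernel, the top-height component of $c$ must vanish, contradicting the choice of $h_0$.

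To show $B_h$ is injective, set $E := \sum_{\alpha \in \Delta} E_{-\alpha} \in \bar\frn$, a regular (principal) nilpotent element. A direct calculation yields $(\ad E)(E_\gamma) = \sum_{\alpha \in \Delta,\, \gamma - \alpha \in \Phi^+} N_{-\alpha, \gamma} E_{\gamma - \alpha}$ for $\gamma$ of height at least two, while substituting $\beta = \gamma - \alpha$ in the definition of $B_h$ gives $B_h(E_\gamma) = \sum_{\alpha \in \Delta,\, \gamma - \alpha \in \Phi^+} N_{\gamma - \alpha, -\gamma} E_{\gamma - \alpha}$. Applying the structure-constant identity at the collinear triple $\alpha + (-\gamma) + (\gamma - \alpha) = 0$ --- Lemma~\ref{Chavelley basis2} in the simply laced case, and its length-ratio analogue otherwise --- together with the Chevalley sign rule $N_{-x,-y} = -N_{x,y}$, one obtains $N_{\gamma - \alpha, -\gamma} = N_{-\alpha, \gamma}$ up to a nonzero constant depending only on $\gamma$. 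Hence $B_h$ and $\ad E|_{V_h}$ coincide up to an invertible diagonal rescaling of $V_h$ and share the same kernel. By Kostant's theorem on regular nilpotents, $\frg^E := \ker(\ad E|_\frg)$ sits entirely inside $\bar\frn$ and has dimension equal to the rank of $\frg$; therefore $\frg^E \cap V_h = 0$ and $\ad E|_{V_h}$, hence $B_h$, is injective for every $h \ge 2$.

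The main obstacle is the identification of $B_h$ with $\ad E|_{V_h}$ in the non-simply laced cases, where Lemma~\ref{Chavelley basis2} no longer gives literal equalities but only proportionalities involving squared root lengths; one must track these factors carefully to confirm the induced rescaling is genuinely a diagonal automorphism of $V_h$. A self-contained combinatorial alternative, closer in spirit to the Hasse-diagram approach announced for Section~6, would instead pick for each $\gamma \in \Phi^+ \setminus \Delta$ a simple root $\alpha(\gamma)$ with $\beta(\gamma) := \gamma - \alpha(\gamma) \in \Phi^+$, assemble the square submatrix of $A(\Phi)$ indexed by the rows $\beta(\gamma)$, and verify by induction on height that this submatrix is block triangular with nonsingular diagonal blocks; such an argument avoids Kostant at the cost of a delicate case-by-case inspection of the Chevalley sign conventions within each height level.
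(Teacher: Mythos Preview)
Your approach is genuinely different from the paper's, and in several respects cleaner. The paper proceeds by a case-by-case inspection of irreducible root systems: it reduces to showing that each height-block $A_i(\Phi)$ has full rank (your Lemma as well), then for each $i$ prunes the bipartite graph $G(\Phi_i,\Phi_{i+1})$ via ``hanging edges'' to reach a small ``central graph'', and finally computes the determinant of the residual matrix explicitly, usually modulo~$2$. Your reduction to the injectivity of $\ad E|_{V_h}$ for a principal nilpotent $E$ and the appeal to Kostant's centralizer theorem bypass all of this combinatorics; it is a conceptual one-line replacement for several pages of diagram-chasing.

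There is, however, one concrete slip in the non--simply laced step. From the triple $(\gamma-\alpha)+(-\gamma)+\alpha=0$ and the Chevalley sign rule one finds
\[
\frac{N_{\gamma-\alpha,-\gamma}}{N_{-\alpha,\gamma}}=\frac{\langle\alpha,\alpha\rangle}{\langle\gamma-\alpha,\gamma-\alpha\rangle},
\]
which depends on $\alpha$ (equivalently on $\beta=\gamma-\alpha$), not only on $\gamma$. For example in $B_3$ at height $3$, with $\gamma=e_1$, subtracting the short root $e_3$ gives ratio $1/2$ while subtracting the long root $e_1-e_2$ gives ratio $2$. So $B_h$ is \emph{not} $(\ad E)\circ D$ for a diagonal $D$ on $V_h$, and your sentence ``up to a nonzero constant depending only on $\gamma$'' is false as written. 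The fix is painless: replace $E$ by $E'=\sum_{\alpha\in\Delta}\langle\alpha,\alpha\rangle\,E_{-\alpha}$, still a principal nilpotent. Then the ratio becomes $1/\langle\beta,\beta\rangle$, depending only on the target index $\beta$, so $B_h=D'\circ(\ad E'|_{V_h})$ with $D'$ the invertible diagonal $\mathrm{diag}(\langle\beta,\beta\rangle^{-1})$ on $V_{h-1}$, and Kostant's theorem finishes the argument exactly as you intended.
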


The proof of this lemma is different from our main ideas. We will show this in the last section.
\begin{lemma}\label{equni1}
Let $f\in\caA_1$ be a nonzero weighted solution of the system $\ref{eqsin1}$, with weight $\mu$. Then $f$ is unique $($up to a scalar$)$.
\end{lemma}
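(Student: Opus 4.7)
The plan is to deduce uniqueness by leading-term analysis, reducing the vanishing of the leading part of a ``trivial-looking'' weighted solution to an application of Lemma \ref{eqgra1} via a matrix reformulation. By Lemma \ref{eqwt1}, every monomial $x^a x^{\vec{z}-\vec{j}}$ in a weighted $f$ of weight $\mu=\lambda-\sum z_i\alpha_i$ obeys $\sum a_i\beta_i=\sum j_i\alpha_i$, so its degree equals $\sum z_i-\sum a_i(\htt\beta_i-1)\leq\sum z_i$, with equality iff $a=\vec{j}=0$. Hence the coefficient $c_f:=f_{\vec z}\in\bbC$ of $x^{\vec z}$ is a scalar and $T(f)=c_f\cdot x^{\vec z}$ whenever $c_f\neq 0$. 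If one shows that $c_f\neq 0$ for every nonzero weighted solution, uniqueness follows at once: for two such solutions $f_1,f_2$ the combination $c_{f_2}f_1-c_{f_1}f_2$ is a weighted solution with trivial $x^{\vec z}$-coefficient, so by the claim it vanishes.

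To prove the claim, suppose for contradiction that $c_f=0$, whence $\deg T(f)<\sum z_i$. By Lemma \ref{eqwt2}, $d_\alpha(f)=0$ forces $D_\alpha(T(f))=0$ for every $\alpha\in\Delta$, and the commutator identity $[D_\alpha,D_\beta]=N_{\alpha,\beta}D_{\alpha+\beta}$ (established inside the proof of Proposition \ref{prop1}(\rmnum{3})) propagates this, by induction on $\htt\gamma$, to $D_\gamma(T(f))=0$ for every $\gamma\in\Phi^+$.

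Now assemble these equations into a single matrix relation. For each $\gamma\in\Phi^+$ the formula
\[
D_\gamma=\sum_{\delta\in\Phi^+\setminus\Delta}N_{\gamma,-\delta}\,x_{\delta-\gamma}\,\pt_\delta
\]
(the sum restricted to non-simple $\delta$ because $\delta-\gamma\in\Phi^+$ forces $\htt\delta\geq 2$) shows that the coefficient $N_{\gamma,-\delta}x_{\delta-\gamma}$ is exactly the $(\gamma,\delta)$-entry of $A(\Phi)(x)$, so the whole system reads
\[
A(\Phi)(x)\cdot\bigl(\pt_\delta T(f)\bigr)_{\delta\in\Phi^+\setminus\Delta}=0\qquad\text{in }\caA_1^m.
\]
By Lemma \ref{eqgra1}, some $(m-n)\times(m-n)$ minor $\det B(x)$ of $A(\Phi)(x)$ is a nonzero polynomial in $\caA$, and $\caA_1$ is an integral domain (its leading-term structure ruling out zero divisors), so multiplying the restricted square subsystem by its adjugate yields $\pt_\delta T(f)=0$ for every non-simple $\delta$. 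Thus $T(f)$ depends only on the simple variables $x_{\alpha_i}$; the weight constraint then forces $T(f)=c\cdot x^{\vec z}$, and the bound $\deg T(f)<\sum z_i$ gives $c=0$, i.e.\ $T(f)=0$, contradicting $f\neq 0$.

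The main obstacle is the passage from the rank statement of Lemma \ref{eqgra1} (proved over $\bbC$ in Section 6 by root-poset methods) to the desired vanishing $\pt_\delta T(f)=0$ in the much larger ring $\caA_1$. This relies on the observation that the leading-term structure of $\caA_1$ prevents zero divisors, so a nonzero polynomial in $\caA$ acts injectively on $\caA_1$. The propagation of $D_\alpha(T(f))=0$ from $\alpha\in\Delta$ to all $\gamma\in\Phi^+$ via commutators is essential but routine once $[D_\alpha,D_\beta]=N_{\alpha,\beta}D_{\alpha+\beta}$ is in hand.
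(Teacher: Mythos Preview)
Your proof is correct and follows essentially the same approach as the paper: both reduce to showing $T(f)=c\,x^{\vec z}$ via the matrix equation $A(\Phi)(x)\cdot(\partial_\gamma T(f))_\gamma=0$ and Lemma~\ref{eqgra1}, then conclude uniqueness by subtracting suitably scaled solutions. The only organizational differences are that you frame the leading-term analysis as a contradiction (assuming $c_f=0$) and propagate $D_\gamma(T(f))=0$ via the commutator identity on the $D$-operators, whereas the paper propagates $d_\beta(f)=0$ first and then applies Lemma~\ref{eqwt2}; your explicit remark that nonzero polynomials act injectively on $\caA_1$ fills in a step the paper leaves implicit.
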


\begin{proof}
Evidently $d_\alpha(f)=0$ for $\alpha\in\Delta$ implies $d_\beta(f)=0$ for $\beta\in\Phi^+$. As a consequence of Lemma \ref{eqwt2}, we obtain $D_\beta (T(f))=0$. In other words,
\[
\sum_{\gamma,\gamma-\beta\in\Phi^+}N_{\beta,-\gamma}x_{\gamma-\beta}\pt_\gamma(T(f))=0.
\]
Set $X=(\pt_\gamma(T(f)))_{(m-n)\times1}$. We get a system of linear equations $A(\Phi)(x)X=\mathbf{0}$, with coefficient matrix $A(\Phi)(x)$ and unknowns $\pt_\gamma(T(f))$ for $\gamma\in\Phi^+\backslash\Delta$. With Lemma \ref{eqgra1} in hand, we claim that $X=\mathbf{0}$. Indeed, since $A(\Phi)$ has full rank, we can choose an $(m-n)\times(m-n)$ submatrix $M$ of $A(\Phi)(x)$ such that $\det(M)$ is nonzero when $x_\beta=1$ for all $\beta\in\Phi^+$. Then $MX=\mathbf{0}$ and $\det(M)$ is a nonzero polynomial in $\caA$. Let $M^*$ be the adjugate matrix of $M$. We obtain
\[
\mathbf{0}=M^*(MX)=\det(M)X
\]
and thus $X=\mathbf{0}$.

Note that $T(f)$ is also weighted, with weight $\mu$. Assume that $\lambda-\mu=\sum_{i=1}^nz_i\alpha_i$ for $z_i\in\bbC$. Since $\pt_\gamma(T(f))=0$ for $\gamma\in\Phi^+\backslash\Delta$, we must have $T(f)=cx^{\vec{z}}$ by Lemma \ref{eqwt1}, where $c$ is a nonzero constant and $\vec{z}=(z_1,z_2,\ldots,z_{n})$. Let $f'$ be another weighted solution of the system \ref{eqsin1}, with weight $\mu$. Then there exists a nonzero constant $c'$ such that $T(f')=c'x^{\vec{z}}$. So $c'f-cf'$ is also a weighted solution with weight $\mu$. If $c'f-cf'\neq0$, there exists a nonzero constant $c''$ such that $T(c'f-cf')=c''x^{\vec{z}}$ and thus $c''=c'c-cc'=0$, a contradiction. Hence $c'f-cf'=0$ and $f$ is unique up to a scalar.
\end{proof}

\subsection{Differential-operator representations}

Let $s_\alpha\in W$ be the simple reflection corresponding to $\alpha\in\Delta$. If $f\in\caA_1$ is weighted with weight $\mu$, we define
\begin{equation}\label{eqact0}
s_\alpha(f)=\eta_\alpha^{(\mu+\rho)(H_\alpha)}(f)=\eta_\alpha^{\langle \mu+\rho,\alpha^\vee\rangle}(f).
\end{equation}
Since any $f\in\caA_1$ can be written as $f=\sum_{i\in\bbN}f_i$, with $f_i$ being weighted, we can define
\begin{equation}\label{eqact1}
s_\alpha(f)=\sum_{i\in\bbN}s_\alpha(f_i),
\end{equation}
for $\alpha\in\Delta$. This gives an action of $s_\alpha$ on $\caA_1$.

Suppose that $f\in\caA_1$ is a weighted solution of the system \ref{eqsin1} with weight $\mu$. In view of Lemma \ref{eqcom1}, we have
\begin{equation}\label{eqthm11}
\begin{aligned}
d_{\beta}(s_{\alpha}(f))=&d_{\beta}(\eta_{\alpha}^{\langle \mu+\rho,\alpha^\vee\rangle}(f))=[d_{\beta},\eta_{\alpha}^{\langle \mu+\rho,\alpha^\vee\rangle}](f)\\
=&(\langle \mu+\rho,\alpha^\vee\rangle)\delta_{\alpha,\beta}\eta_{\alpha}^{\langle \mu,\alpha^\vee\rangle}(-\langle \mu,\alpha^\vee\rangle+\zeta_{\alpha})(f)=0
\end{aligned}
\end{equation}
and
\begin{equation}\label{eqthm12}
\begin{aligned}
\zeta_\beta(s_{\alpha}(f))=&[\zeta_\beta,\eta_{\alpha}^{\langle \mu+\rho,\alpha^\vee\rangle}](f)+\eta_{\alpha}^{\langle \mu+\rho,\alpha^\vee\rangle}(\zeta_\beta(f))\\
=&(-\langle \mu+\rho,\alpha^\vee\rangle\alpha(H_\beta)+\mu(H_\beta))s_{\alpha}(f)=(s_\alpha\cdot\mu)(H_\beta)s_{\alpha}(f)
\end{aligned}
\end{equation}
for $\alpha,\beta\in\Delta$. Therefore, $s_{\alpha}(f)$ is also a weighted solution of the system \ref{eqsin1}, with weight $s_\alpha\cdot\mu$. Moreover, it follows from Lemma \ref{eqwt2} that
\begin{equation}\label{eqthm13}
T(s_{\alpha}(f))=x_\alpha^{\langle \mu+\rho,\alpha^\vee\rangle}T(f).
\end{equation}

\begin{lemma}\label{eqweyl}
Assume that $\langle\alpha,\alpha\rangle\leq\langle\beta,\beta\rangle$ for $\alpha,\beta\in\Delta$ $(\alpha\neq\beta)$. Let $m_{\alpha,\beta}$ be the smallest positive integer such that $(s_{\alpha}s_{\beta})^{m_{\alpha,\beta}}=1$. Let $c_1, c_2\in\bbC$.
\begin{itemize}
\item[$\mathrm{(\rmnum{1})}$]
If $m_{\alpha,\beta}=2$, then
\[
\eta_{\beta}^{c_2}\eta_{\alpha}^{c_1}
=\eta_{\alpha}^{c_1}\eta_{\beta}^{c_2};
\]
\item[$\mathrm{(\rmnum{2})}$]
If $m_{\alpha,\beta}=3$, then
\[
\eta_{\alpha}^{c_2}\eta_{\beta}^{c_1+c_2}\eta_{\alpha}^{c_1}
=\eta_{\beta}^{c_1}\eta_{\alpha}^{c_1+c_2}\eta_{\beta}^{c_2};
\]
\item[$\mathrm{(\rmnum{3})}$]
If $m_{\alpha,\beta}=4$, then
\[
\eta_{\beta}^{c_2}\eta_{\alpha}^{c_1+2c_2}\eta_{\beta}^{c_1+c_2}\eta_{\alpha}^{c_1}
=\eta_{\alpha}^{c_1}\eta_{\beta}^{c_1+c_2}\eta_{\alpha}^{c_1+2c_2}\eta_{\beta}^{c_2};
\]
\item[$\mathrm{(\rmnum{4})}$]
If $m_{\alpha,\beta}=6$, then
\[
\eta_{\beta}^{c_2}\eta_{\alpha}^{c_1+3c_2}\eta_{\beta}^{c_1+2c_2}
\eta_{\alpha}^{2c_1+3c_2}\eta_{\beta}^{c_1+c_2}\eta_{\alpha}^{c_1}
=\eta_{\alpha}^{c_1}\eta_{\beta}^{c_1+c_2}\eta_{\alpha}^{2c_1+3c_2}
\eta_{\beta}^{c_1+2c_2}\eta_{\alpha}^{c_1+3c_2}\eta_{\beta}^{c_2}.
\]
\end{itemize}
\end{lemma}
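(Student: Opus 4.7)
When $m_{\alpha,\beta}=2$ (case (i)), we have $\langle\alpha,\beta^\vee\rangle=0$, so $\alpha+\beta\notin\Phi$ and $[E_{-\alpha},E_{-\beta}]=0$ in $U(\frg)$. Since $\caA$ is a $U(\frg)$-representation, $\eta_\alpha$ and $\eta_\beta$ commute, and by \eqref{eqet2} their complex powers---each a formal series in $x_\alpha$ and $\eta_\alpha-x_\alpha$ (resp.\ $x_\beta$ and $\eta_\beta-x_\beta$)---commute as well.

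For cases (ii)--(iv) I would proceed in two steps. First, specialize to integer exponents $(c_1,c_2)=(k_1,k_2)\in\bbN^2$: here each side of the identity becomes left multiplication on $\caA\cong U(\bar\frn)$ by a specific PBW element $u_L$ or $u_R\in U(\bar\frn)$. Choose $\lambda\in\frh^*$ satisfying $\langle\lambda+\rho,\alpha^\vee\rangle=k_1$ and $\langle\lambda+\rho,\beta^\vee\rangle=k_2$, which is possible since $\alpha^\vee,\beta^\vee$ are linearly independent. Iterating the computations \eqref{eqthm11}--\eqref{eqthm13} along the two reduced expressions of the longest element $w_0$ of the dihedral group $\langle s_\alpha,s_\beta\rangle$---and verifying case by case from the rank-two Cartan integers that every successive exponent appearing in the lemma coincides with the corresponding $\langle w'\!\cdot\lambda+\rho,\gamma^\vee\rangle$---shows that $u_L v_\lambda=\tau^{-1}(\mathrm{LHS}(1))$ and $u_R v_\lambda=\tau^{-1}(\mathrm{RHS}(1))$ are nonzero weighted solutions of \eqref{eqsin1} of common weight $w_0\cdot\lambda$. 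Lemma \ref{equni1} forces proportionality, Lemma \ref{eqwt2} pins the scalar to $1$ via leading-term comparison, and the injectivity of the PBW map $u\mapsto uv_\lambda$ then yields $u_L=u_R$ in $U(\bar\frn)$, i.e.\ the operator identity on $\caA$ for every $(c_1,c_2)\in\bbN^2$.

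The second step promotes this to all complex $(c_1,c_2)$ and to $\caA_1$. By \eqref{eqet2} and the good-ordering identity $[x_\alpha,\eta_\alpha]=0$, each side can be rewritten as $x_\alpha^{c_1+c_2}x_\beta^{c_1+c_2}\cdot F(c_1,c_2)$, where $F$ is a differential operator on $\caA_1$ whose $\{x^a\partial^b\}$-coefficients are polynomials in $(c_1,c_2)$ arising from the Pochhammer symbols $\langle c\rangle_p$. Since $\mathrm{LHS}-\mathrm{RHS}$ vanishes on the Zariski-dense set $\bbN^2\subset\bbC^2$, the polynomial coefficients of $F_L-F_R$ vanish identically, yielding the operator identity on $\caA_1$ for all $(c_1,c_2)\in\bbC^2$. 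The extension from operator equality on $\caA$ to operator equality on $\caA_1$ at each fixed complex parameter value is likewise handled by applying the same polynomial continuation to weighted monomial inputs $x^a x^{\vec u}$ with $a\in\Gamma_s$ and $\vec u\in\bbC^n$, which together span $\caA_1$ formally.

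The principal obstacle is case (iv), type $G_2$, where the length-six reduced expression with exponents $c_1,\,c_1+c_2,\,2c_1+3c_2,\,c_1+2c_2,\,c_1+3c_2,\,c_2$ requires careful use of the asymmetric Cartan integers $\langle\alpha,\beta^\vee\rangle=-1$ and $\langle\beta,\alpha^\vee\rangle=-3$; the verification that these exponents match the iterated dot-action values $\langle w'\!\cdot\lambda+\rho,\gamma^\vee\rangle$ along the two reduced expressions is elementary but tedious. A secondary subtlety is ensuring that the factorization $x_\alpha^{c_1+c_2}x_\beta^{c_1+c_2}\cdot F$ is well-defined on $\caA_1$, which relies crucially on the good-ordering hypothesis.
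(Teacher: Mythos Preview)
Your proposal is correct and follows essentially the same strategy as the paper: choose $\lambda$ with prescribed values $\langle\lambda+\rho,\alpha^\vee\rangle=c_1$, $\langle\lambda+\rho,\beta^\vee\rangle=c_2$, use \eqref{eqthm11}--\eqref{eqthm13} together with Lemma~\ref{equni1} to identify the two sides applied to $1$, pull back through the free $U(\bar\frn)$-module structure of $M_\lambda$ when $c_1,c_2\in\bbN$, and then extend to $\bbC^2$ by polynomiality of the coefficients in the expansion \eqref{eqet2}. The paper treats only case~(ii) in detail and declares the others similar; your separate treatment of case~(i) via $[E_{-\alpha},E_{-\beta}]=0$ is a harmless simplification, and your remarks on the $G_2$ exponent bookkeeping are accurate but not structurally different. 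The final paragraph about extending operator equality from $\caA$ to $\caA_1$ via monomial inputs is unnecessary: once both sides are written as $\sum_{p,q} x_\alpha^{\cdots-p}x_\beta^{\cdots-q}\sum_{a,b}P_{p,q,a,b}(c_1,c_2)x^a\partial^b$ with polynomial $P_{p,q,a,b}$, the vanishing on $\bbN^2$ already yields identical operators on all of $\caA_1$, so no second continuation is required.
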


\begin{proof}
Here we only prove $\mathrm{(\rmnum{2})}$, since the other cases are similar. With $\alpha\neq\beta$, we can choose $\lambda\in\frh^*$ such that $\langle\lambda+\rho,\alpha^\vee\rangle=c_1$ and $\langle\lambda+\rho,\beta^\vee\rangle=c_2$. Note that $(s_{\alpha}s_{\beta})^3=1$ corresponds to $\langle\beta,\alpha^\vee\rangle=-1$. It follows from (\ref{eqthm11}) and (\ref{eqthm12}) that $s_{\alpha}s_{\beta}s_{\alpha}(1)$ and $s_{\beta}s_{\alpha}s_{\beta}(1)$ are weighted solutions of the system \ref{eqsin1}, with the same weight $s_{\alpha+\beta}\cdot\lambda$. In view of Lemma \ref{equni1} and (\ref{eqthm13}), one has $s_{\alpha}s_{\beta}s_{\alpha}(1)=s_{\beta}s_{\alpha}s_{\beta}(1)$.

First we consider the case when $c_1, c_2\in\bbN$. Then
\[
\begin{aligned}
0=&\tau^{-1}(s_{\alpha}s_{\beta}s_{\alpha}(1)-s_{\beta}s_{\alpha}s_{\beta}(1))\\
=&\tau^{-1}(\eta_{\alpha}^{c_2}\eta_{\beta}^{c_1+c_2}\eta_{\alpha}^{c_1}(1)
-\eta_{\beta}^{c_1}\eta_{\alpha}^{c_1+c_2}\eta_{\beta}^{c_2}(1))\\
=&(E_{-\alpha}^{c_2}E_{-\beta}^{c_1+c_2}E_{-\alpha}^{c_1}
-E_{-\beta}^{c_1}E_{-\alpha}^{c_1+c_2}E_{-\beta}^{c_2})v_\lambda.
\end{aligned}
\]
Keeping in mind that $M_\lambda=U(\bar\frn)v_\lambda$ is a free $U(\bar\frn)$-module, we get
\[
E_{-\alpha}^{c_2}E_{-\beta}^{c_1+c_2}E_{-\alpha}^{c_1}
-E_{-\beta}^{c_1}E_{-\alpha}^{c_1+c_2}E_{-\beta}^{c_2}=0
\]
and thus
\begin{equation}\label{eqcom2}
\eta_{\alpha}^{c_2}\eta_{\beta}^{c_1+c_2}\eta_{\alpha}^{c_1}
=\eta_{\beta}^{c_1}\eta_{\alpha}^{c_1+c_2}\eta_{\beta}^{c_2}
\end{equation}
for $c_1,c_2\in\bbN$.

In general, with Proposition \ref{prop1} and (\ref{eqet2}) in hand, we can assume that
\[
\eta_{\alpha}^{c_1}\eta_{\beta}^{c_1+c_2}\eta_{\alpha}^{c_2}
=\sum_{p,q\in\bbN} x_{\alpha}^{c_1+c_2-p}x_{\beta}^{c_1+c_2-q}
\sum_{a\in\Gamma_s,b\in\Gamma}P_{p,q,a,b}(c_1,c_2)x^{a}\pt^b,
\]
and
\[
\eta_{\beta}^{c_2}\eta_{\alpha}^{c_1+c_2}\eta_{\beta}^{c_1}
=\sum_{p,q\in\bbN} x_{\alpha}^{c_1+c_2-p}x_{\beta}^{c_1+c_2-q}
\sum_{a\in\Gamma_s,b\in\Gamma}Q_{p,q,a,b}(c_1,c_2)x^{a}\pt^b,
\]
where $P_{p,q,a,b}(c_1,c_2)$ and $Q_{p,q,a,b}(c_1,c_2)$ are polynomials in $c_1$ and $c_2$. In view of (\ref{eqcom2}), one has
\[
P_{p,q,a,b}(c_1,c_2)-Q_{p,q,a,b}(c_1,c_2)=0
\]
for $(c_1,c_2)\in\bbN^2$. We can use induction on $l$ to show that a polynomial function on $\bbC^l$ vanishing on $\bbN^l$ must be zero. Then (\rmnum{2}) follows from the case $l=2$.
\end{proof}

\begin{theorem}\label{thm1}
The space $\caA_1$ of truncated-up formal power series is a representation of the Weyl group $W$, given by $(\ref{eqact1})$.
\end{theorem}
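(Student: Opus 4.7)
The plan is to verify directly that the formulas in (\ref{eqact1}) satisfy the defining relations of the Weyl group, namely $s_\alpha^2=1$ for $\alpha\in\Delta$ and the braid relations $(s_\alpha s_\beta)^{m_{\alpha,\beta}}=1$ for $\alpha\neq\beta$ in $\Delta$. Since the action on an arbitrary $f\in\caA_1$ is defined by extending linearly from its decomposition into weighted components, it suffices to verify both relations on weighted elements.

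For the involution relation, fix a weighted $f$ of weight $\mu$ and set $c=\langle\mu+\rho,\alpha^\vee\rangle$. Then $s_\alpha(f)=\eta_\alpha^c(f)$ has weight $s_\alpha\cdot\mu$ by the calculation (\ref{eqthm12}). Since $\langle s_\alpha\cdot\mu+\rho,\alpha^\vee\rangle=\langle s_\alpha(\mu+\rho),\alpha^\vee\rangle=-c$, applying $s_\alpha$ once more gives $s_\alpha^2(f)=\eta_\alpha^{-c}\eta_\alpha^c(f)=f$, where the last equality uses the multiplicative rule $\eta_\alpha^{c_1}\eta_\alpha^{c_2}=\eta_\alpha^{c_1+c_2}$ established just after (\ref{eqet2}).

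For the braid relations, the strategy is to unfold $(s_\alpha s_\beta)^{m_{\alpha,\beta}}(f)$ and the equivalent alternating product $(s_\beta s_\alpha)^{m_{\alpha,\beta}}(f)$ as compositions of complex powers $\eta_\alpha^?$ and $\eta_\beta^?$, tracking the exponents through the weight changes. Starting from a weighted $f$ of weight $\mu$, set $c_1=\langle\mu+\rho,\alpha^\vee\rangle$ and $c_2=\langle\mu+\rho,\beta^\vee\rangle$. After each application of a simple reflection the new exponent is of the form $\langle w\cdot\mu+\rho,\gamma^\vee\rangle=\langle\mu+\rho,w^{-1}\gamma^\vee\rangle$, and the $\langle s_\alpha,s_\beta\rangle$-orbit of $\{\alpha^\vee,\beta^\vee\}$ is determined entirely by the Cartan integers $\langle\alpha,\beta^\vee\rangle$ and $\langle\beta,\alpha^\vee\rangle$, i.e.\ by $m_{\alpha,\beta}$. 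For example, when $m_{\alpha,\beta}=3$ one has $s_\alpha\beta^\vee=\alpha^\vee+\beta^\vee$, and a short computation gives
\[
(s_\alpha s_\beta s_\alpha)(f)=\eta_\alpha^{c_2}\eta_\beta^{c_1+c_2}\eta_\alpha^{c_1}(f),\qquad (s_\beta s_\alpha s_\beta)(f)=\eta_\beta^{c_1}\eta_\alpha^{c_1+c_2}\eta_\beta^{c_2}(f),
\]
which agree by Lemma \ref{eqweyl}(ii). The cases $m_{\alpha,\beta}=2,4,6$ are handled analogously: the exponent sequences produced by the weight bookkeeping are exactly the ones on the two sides of parts (i), (iii), (iv) of Lemma \ref{eqweyl}.

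The main obstacle is purely the bookkeeping in the general braid computation: one must verify that the exponents generated by iterating the definition really do match the pattern in each part of Lemma \ref{eqweyl}. Once this is confirmed for each of the four possible values of $m_{\alpha,\beta}$, both families of defining relations of $W$ hold on weighted elements, hence on all of $\caA_1$, so (\ref{eqact1}) extends to a well-defined action of $W$ on $\caA_1$.
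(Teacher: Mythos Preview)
Your proposal is correct and follows essentially the same route as the paper: verify $s_\alpha^2=1$ on weighted vectors via $\eta_\alpha^{-c}\eta_\alpha^{c}=1$, then reduce the braid relations to the operator identities of Lemma~\ref{eqweyl} by tracking the exponents $\langle w(\mu+\rho),\gamma^\vee\rangle$ through the successive reflections. The paper's own proof is just a terse version of exactly this argument, citing Lemma~\ref{eqweyl} and the Coxeter presentation of $W$ without spelling out the exponent bookkeeping; the only thing to keep an eye on is that Lemma~\ref{eqweyl} is stated under the normalization $\langle\alpha,\alpha\rangle\leq\langle\beta,\beta\rangle$, so in cases $m_{\alpha,\beta}=4,6$ you must match your short/long root to the lemma's $\alpha,\beta$ when reading off the exponent pattern.
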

\begin{proof}
Let $f\in\caA_1$ be weighted function with weight $\mu$. Note that
\[
s_\alpha^2(f)=s_\alpha(\eta_\alpha^{\langle\mu+\rho,\alpha^\vee\rangle}(f))
=\eta_\alpha^{-\langle\mu+\rho,\alpha^\vee\rangle}
(\eta_\alpha^{\langle\mu+\rho,\alpha^\vee\rangle}(f))=f
\]
by (\ref{eqact0}). In view of (\ref{eqact1}),
\begin{equation*}\label{eqthm14}
(s_\alpha|_{\caA_1})^2=1_{\caA_1}\quad\mbox{for}\ \alpha\in\Delta.
\end{equation*}
It follows from Lemma \ref{eqweyl} that
\begin{equation*}\label{eqthm15}
((s_\alpha|_{\caA_1})(s_\beta|_{\caA_1}))^{m_{\alpha,\beta}}=1_{\caA_1}
\quad\mbox{for}\ \alpha,\beta\in\Delta,
\end{equation*}
where $m_{\alpha,\beta}$ is the smallest positive integer such that $(s_{\alpha}s_{\beta})^{m_{\alpha,\beta}}=1$ (the order $m_{\alpha,\beta}=2, 3, 4$ or $6$ is well known, see for example Proposition 2.8 in \cite{H2}). By Theorem 1.9 in \cite{H2}, this proves that (\ref{eqact1}) defines a representation of $W$ on $\caA_1$.
\end{proof}

%
%
\section{Solutions of the system \ref{eqsin1}}
%
%
In this section we will solve the system \ref{eqsin1} of partial differential equations. We also provide a new proof of the well-known BGG-Verma Theorem and determine all the polynomial solutions of the system.

\subsection{Weighted solutions in $\caA_1$}
Let $Z(\frg)$ be the center of the universal enveloping algebra $U(\frg)$. Recall that $M_\lambda$ has an infinitesimal character $\chi_\lambda$, where $\chi_\lambda$ is a homomorphism $Z(\frg)\ra\bbC$ so that $z\cdot v=\chi_\lambda(z)v$ for all $z\in Z(\frg)$ and $v\in M_\lambda$.

\begin{lemma}\label{eqcen1}
For $z\in Z(\frg)$, we have $z|_{\caA_1}=\chi_\lambda(z).$
\end{lemma}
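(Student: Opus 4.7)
The plan is to reduce the identity on $\caA_1$ to the same identity on the polynomial subspace $\caA$ (where it is essentially tautological by the infinitesimal character property of $M_\lambda$), and then to transfer it by using that the $U(\frg)$-action on $\caA_1$ factors through the Weyl algebra $A$.

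First I would handle the identity on $\caA$. Because $M_\lambda$ admits $\chi_\lambda$ as its infinitesimal character, every $z \in Z(\frg)$ acts on $M_\lambda$ as $\chi_\lambda(z) \cdot \id_{M_\lambda}$. Applying the definition (\ref{eqenv1}) directly gives, for any $f \in \caA$,
$z(f) = \tau(z \cdot \tau^{-1}(f)) = \tau(\chi_\lambda(z) \tau^{-1}(f)) = \chi_\lambda(z) f$.
Hence $z|_\caA = \chi_\lambda(z) \cdot \id_\caA$ as operators on the polynomial algebra.

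Next I would upgrade this to an equality inside the Weyl algebra $A$. By Proposition \ref{prop1}, $z|_\caA$ lies in $A$; of course the scalar $\chi_\lambda(z)$ also lies in $A$, so the difference $D := z|_\caA - \chi_\lambda(z)$ is an element of $A$ that annihilates every monomial $x^c$, $c \in \Gamma$. Since $A$ acts faithfully on the polynomial ring $\caA$ (a standard check: if $D = \sum c_{a,b} x^a \pt^b$ kills every $x^c$, a descending induction on $|b|$ forces each $c_{a,b} = 0$), we conclude $D = 0$ in $A$, that is, $z|_\caA = \chi_\lambda(z)$ as elements of the Weyl algebra.

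Finally I would invoke the fact that $u \mapsto u|_\caA$ is an algebra homomorphism $U(\frg) \to A$, and that the $U(\frg)$-action on $\caA_1$ is, by construction, the restriction of the natural $A$-action on $\caA_1$ along this homomorphism. Specialising to $z$ gives $z|_{\caA_1} = \chi_\lambda(z) \cdot \id_{\caA_1}$, as required. The only potential bookkeeping obstacle is making sure that the $U(\frg)$-action on $\caA_1$ really does factor through $A$ in this way; this, however, is essentially built into how $\caA_1$ was introduced (as an $A$-invariant subspace of truncated-up formal series), and requires no further input beyond the homomorphism property of $u \mapsto u|_\caA$.
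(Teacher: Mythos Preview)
Your proposal is correct and follows essentially the same strategy as the paper: both arguments rest on (a) Proposition~\ref{prop1}, which places $z|_\caA$ in the Weyl algebra $A$; (b) the infinitesimal character identity $z|_\caA = \chi_\lambda(z)\,\id_\caA$; (c) faithfulness of $A$ on $\caA$; and (d) the fact that the $U(\frg)$-action on $\caA_1$ is by construction the $A$-action restricted along $u \mapsto u|_\caA$. The only cosmetic difference is in how step (c) is packaged: the paper first shows $[z|_{\caA_1},x_\beta]=0$ for each $\beta$ (by evaluating on $\caA$) to force all $\partial$-terms in the expansion $\sum C_{z,a,b}x^a\partial^b$ to vanish, leaving a multiplication operator which is then pinned down by evaluating at $1$; you instead subtract the scalar immediately and invoke faithfulness once. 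Your route is slightly more direct but not materially different.
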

\begin{proof}
First, for any $f, g\in\caA$, one has
\[
[z|_{\caA_1},f](g)=\tau(z\cdot\tau^{-1}(fg))-f\tau(z\cdot\tau^{-1}(g))=\chi_\lambda(z)fg-f\chi_\lambda(z)g=0.
\]
So $[z|_{\caA_1},f]=0$ for any $f\in\caA$. With Proposition \ref{prop1}, we can assume that $z|_{\caA_1}=\sum_{a,b\in\Gamma}C_{z,a,b}x^a\pt^b$, where $C_{z,a,b}$ are constants and nonzero for only finitely many pairs $(a, b)$. Then
\[
\begin{aligned}
0=&[z|_{\caA_1},x_\beta]=(z|_{\caA_1})x_\beta-x_\beta (z|_{\caA_1})\\
=&\sum_{a,b\in\Gamma}C_{z,a,b}(x^{a+\eps_\beta}\pt^b+b_\beta x^a\pt^{b-\eps_\beta})
-\sum_{a,b\in\Gamma}C_{z,a,b}x^{a+\eps_\beta}\pt^b\\
=&\sum_{a,b\in\Gamma}b_\beta C_{z,a,b}x^a\pt^{b-\eps_\beta}
\end{aligned}
\]
for $\beta\in\Phi^+$. So $C_{z,a,b}=0$ unless $b_\beta=0$. Therefore $z|_{\caA_1}=\sum_{a\in\Gamma}C_{z,a,0}x^a\in\caA$. On the other hand, $(z|_{\caA_1})(1)=\tau(zv_\lambda)=\chi_\lambda(z)$. Hence $z|_{\caA_1}=\chi_\lambda(z).$
\end{proof}

\begin{lemma}\label{eqcen2}
Let $f\in\caA_1$ be a nonzero weighted solution of the system $\ref{eqsin1}$, with weight $\mu$.  Then we have an isomorphism
\[
\psi:M_\mu\simeq U(\frg)|_{\caA_1}(f).
\]
given by $\psi(v_\mu)=f$. Moreover, there exists $w\in W$ such that $\mu=w\cdot \lambda$.
\end{lemma}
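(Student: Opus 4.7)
The plan is to produce $\psi$ from the universal property of $M_\mu$, to deduce $\mu=w\cdot\lambda$ from a central character computation, and finally to prove injectivity by a leading-term argument based on the PBW basis and Lemma \ref{equni1}.

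Since $f$ is weighted of weight $\mu$ under the $\zeta_\alpha$-action and is annihilated by every $d_\alpha$ with $\alpha\in\Delta$, it is a highest weight vector of weight $\mu$ for $\caA_1$ regarded as a $U(\frg)$-module. The universal property of $M_\mu=U(\frg)\otimes_{U(\frb)}\bbC v_\mu$ then provides a unique $U(\frg)$-module homomorphism $\psi:M_\mu\to\caA_1$ with $\psi(v_\mu)=f$, whose image is $U(\frg)|_{\caA_1}(f)$. For the second assertion, fix $z\in Z(\frg)$. Since $zv_\mu=\chi_\mu(z)v_\mu$ in $M_\mu$, the $U(\frg)$-equivariance of $\psi$ gives $z\cdot f=\chi_\mu(z)f$, while Lemma \ref{eqcen1} gives $z\cdot f=\chi_\lambda(z)f$. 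As $f\neq 0$, we conclude $\chi_\mu=\chi_\lambda$, and the Harish-Chandra theorem forces $\mu+\rho\in W(\lambda+\rho)$, i.e.\ $\mu=w\cdot\lambda$ for some $w\in W$.

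It remains to establish injectivity of $\psi$. Using the PBW basis $\{E^av_\mu:a\in\Gamma\}$ of $M_\mu$, this reduces to proving that $\{E^a|_{\caA_1}(f):a\in\Gamma\}$ is linearly independent in $\caA_1$. Grouping any relation $\sum_a c_aE^a|_{\caA_1}(f)=0$ according to the $\frh$-weight of $E^av_\mu$, one may assume all indices $a$ share a common value of $\sum a_\beta\beta$. Lemma \ref{equni1} gives $T(f)=c_0x^{\vec z}$ with $c_0\neq 0$ and $\mu=\lambda-\sum z_i\alpha_i$. From Proposition \ref{prop1}(ii) one reads off that $\eta_\beta=x_\beta+(\text{operators not increasing the degree of any weighted function})$, so iterating the leading-term computation of Lemma \ref{eqwt2} yields
\[
T\bigl(E^a|_{\caA_1}(f)\bigr)=c_0\,x^ax^{\vec z},\qquad \deg E^a|_{\caA_1}(f)=|a|+\sum_{i=1}^nz_i.
\]
Taking $k=\max\{|a|:c_a\neq 0\}$, the top-degree part of the relation reads $c_0\sum_{|a|=k}c_a\,x^ax^{\vec z}=0$. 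But the monomials $\{x^ax^{\vec z}\}_{a\in\Gamma}$ are pairwise linearly independent in $\caA_1$ (for distinct $a$, either the exponent tuple $(a_\alpha+z_\alpha)_{\alpha\in\Delta}$ differs or the $\caA_0$-factor $\prod_{\beta\notin\Delta}x_\beta^{a_\beta}$ does), so $c_a=0$ for every $|a|=k$, and descending induction on $k$ completes the argument.

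The only genuine difficulty is the injectivity step. The $W$-action on $\caA_1$ furnished by Theorem \ref{thm1} is not $U(\frg)$-equivariant, so one cannot transport $f$ back to $1$ via $w^{-1}$ and reduce to the tautological identification $\tau:M_\lambda\xrightarrow{\sim}\caA$. Instead one must exploit the fact that the top-degree part of each $\eta_\beta$ is exactly multiplication by $x_\beta$; this is what allows the leading term of $E^a|_{\caA_1}(f)$ to be read off and the linear independence of $\{E^a|_{\caA_1}(f)\}$ to be reduced to the linear independence of ordinary monomials.
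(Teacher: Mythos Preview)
Your proof is correct and follows essentially the same approach as the paper's: both obtain $\psi$ from the universal property, establish injectivity via a leading-term argument based on Proposition \ref{prop1}(ii) (the paper packages this as the single identity $T((u|_{\caA_1})(f))=T(\tau(u))T(f)$, while you unwind it PBW-monomial by PBW-monomial), and deduce $\mu\in W\cdot\lambda$ from Lemma \ref{eqcen1} together with the Harish-Chandra isomorphism. One minor point: the fact $T(f)=c_0x^{\vec z}$ you attribute to Lemma \ref{equni1} is actually established inside its \emph{proof}, not in its statement, so you might cite it accordingly.
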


\begin{proof}
Since $d_\alpha(f)=0$ for $\alpha\in\Delta$, the module $U(\frg)|_{\caA_1}(f)$ is a highest weight module and
there exists a natural surjective homomorphism
\[
\psi:M_\mu\ra U(\frg)|_{\caA_1}(f)
\]
given by $\psi(v_\mu)=f$. So
\[
U(\frg)|_{\caA_1}(f)\simeq U(\bar\frn)|_{\caA_1}(f)
\]
as vector spaces. Suppose that we have $u\in U(\bar\frn)$ such that $(u|_{\caA_1})(f)=0$. Since $u|_{\caA_1}$ is generated by $\eta_\beta$ for $\beta\in\Phi^+$, it follows from Proposition \ref{prop1} that
\[
0=T((u|_{\caA_1})(f))=T(\tau(u))T(f).
\]
But $T(f)\neq0$, one obtains $T(\tau(u))=0$ and thus $u=0$. Hence $\psi$ is also injective.

It is evident that the infinitesimal character of $M_\mu\simeq U(\frg)|_{\caA_1}(f)$ is $\chi_\mu$. By Lemma \ref{eqcen1}, we obtain $\chi_\mu=\chi_\lambda$. Then $\mu\in W\cdot\lambda$ by the Harish-Chandra isomorphism.
\end{proof}

Now we can state our first main result in this section.

\begin{theorem}\label{thm2}
The solution space of the system $\ref{eqsin1}$ in $\caA_1$ is spanned by $\{w(1)\ |\ w\in W\}$, which is the set of all the weighted solutions of the system $\ref{eqsin1}$ up to scalar multiples.
\end{theorem}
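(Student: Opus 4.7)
The plan is to combine Theorem \ref{thm1} with Lemmas \ref{equni1} and \ref{eqcen2}. First, I observe that the constant $1\in\caA_1$ is a weighted solution of (\ref{eqsin1}) with weight $\lambda$. By induction on the length of $w\in W$, using (\ref{eqthm11}) and (\ref{eqthm12}) at each simple-reflection step, $w(1)$ is a weighted solution with weight $w\cdot\lambda$. Theorem \ref{thm1} guarantees that $w(1)$ is well-defined independent of the choice of reduced expression, and iterated application of (\ref{eqthm13}) shows that $T(w(1))$ is a nonzero scalar multiple of a monomial $x^{\vec z}$ determined by $w\cdot\lambda$, so in particular $w(1)\neq 0$.

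Next I would show that every nonzero weighted solution $f\in\caA_1$, say of weight $\mu$, is a scalar multiple of some $w(1)$. Lemma \ref{eqcen2} supplies a $w\in W$ with $\mu=w\cdot\lambda$, so both $f$ and $w(1)$ are weighted solutions of the same weight $w\cdot\lambda$. The uniqueness statement in Lemma \ref{equni1} then forces $f\in\bbC\, w(1)$.

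Finally I would reduce the general case to the weighted one by decomposing an arbitrary solution $f\in\caA_1$ into its weighted components $f=\sum_i f_i$, as already used in the definition (\ref{eqact1}). Since $d_\alpha$ shifts weights by $\alpha$ by Proposition \ref{prop1}, the equation $d_\alpha(f)=0$ decouples across weights and forces $d_\alpha(f_i)=0$ for every $i$. Applying the previous step to each $f_i$ expresses $f$ as a linear combination of $\{w(1)\mid w\in W\}$. The main subtlety I anticipate is that $\caA_1$ admits \emph{continuous} weights through the complex exponents $\vec z^i$, so the decomposition into weighted components is not a standard $\frh$-weight decomposition; this is resolved by the observation that a single $f\in\caA_1$ involves only finitely many base exponents $\vec z^i$, making its weight spectrum countable and the projection onto each weight well-defined.
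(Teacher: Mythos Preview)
Your proposal is correct and follows essentially the same approach as the paper: use (\ref{eqthm11}) and (\ref{eqthm12}) to show each $w(1)$ is a weighted solution of weight $w\cdot\lambda$, then invoke Lemma \ref{eqcen2} together with Lemma \ref{equni1} to see these exhaust the weighted solutions up to scalars. Your added care about well-definedness via Theorem \ref{thm1}, nonvanishing via (\ref{eqthm13}), and the reduction of an arbitrary solution to its weighted components are details the paper leaves implicit; in particular, your observation that only finitely many weighted components can survive (since their weights must lie in the finite set $W\cdot\lambda$ by Lemma \ref{eqcen2}) cleanly justifies the ``spanned by'' assertion.
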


\begin{proof}
For $w\in W$, it follows from (\ref{eqthm11}) and (\ref{eqthm12}) that $w(1)$ is a weighted solution of the system \ref{eqsin1}, with weight $w\cdot\lambda$. On the other hand, Lemma \ref{equni1} and Lemma \ref{eqcen2} imply that $\{w(1)\ |\ w\in W\}$ is the set of all the weighted solutions of the system \ref{eqsin1} up to scalar multiples.
\end{proof}

%
%
\subsection{BGG-Verma Theorem and polynomial solutions}
%
%

Recall that any nonzero weighted polynomial solution of the system \ref{eqsin1} corresponds to a singular vector in $M_\lambda$, and thus a homomorphism between Verma modules. With Lemma \ref{equni1} and Lemma \ref{eqcen2} in hand, we can recover the following well-known result due to Verma \cite{V}.

\begin{theorem}\label{Verma1}
Let $\vf:M_\mu\ra M_\lambda$ be a nonzero homomorphism for $\lambda,\mu\in\frh^*$.
\begin{itemize}
\item[($\mathrm{\rmnum{1}}$)] In all cases, $\dim\Hom_{\frg}(M_\mu,M_\lambda)\leq1$;
\item[($\mathrm{\rmnum{2}}$)] The homomorphism $\vf$ is injective;
\item[($\mathrm{\rmnum{3}}$)] There exists $w\in W$ such that $\mu=w\cdot \lambda$.
\end{itemize}
\end{theorem}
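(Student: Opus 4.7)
The plan is to translate all three statements into the language of the differential-operator representation and then read them off from Lemma \ref{equni1} and Lemma \ref{eqcen2}. The starting observation is that a nonzero $\frg$-homomorphism $\vf: M_\mu \to M_\lambda$ is completely determined by $v:=\vf(v_\mu)$, and $v$ must be a singular vector of weight $\mu$ inside $M_\lambda$. Applying the isomorphism $\tau$, we obtain a nonzero weighted polynomial $f:=\tau(v) \in \caA \subset \caA_1$ which is a solution of the system \ref{eqsin1} with weight $\mu$. Conversely, any such $f$ lifts to a homomorphism. Thus $\Hom_\frg(M_\mu, M_\lambda)$ is in bijection with the space of weighted solutions of \ref{eqsin1} in $\caA$ of weight $\mu$.

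For $(\mathrm{\rmnum{1}})$, I would invoke Lemma \ref{equni1}: weighted solutions of \ref{eqsin1} in $\caA_1$ with a fixed weight are unique up to a scalar, so in particular weighted polynomial solutions are, giving $\dim \Hom_\frg(M_\mu, M_\lambda) \leq 1$. For $(\mathrm{\rmnum{3}})$, Lemma \ref{eqcen2} says that any nonzero weighted solution $f\in\caA_1$ has weight $\mu \in W\cdot\lambda$, so the existence of $\vf$ forces $\mu = w \cdot \lambda$ for some $w \in W$.

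The substantive part is $(\mathrm{\rmnum{2}})$, and the key is again Lemma \ref{eqcen2}. Set $f=\tau(\vf(v_\mu))$ as above; the lemma provides an isomorphism $\psi: M_\mu \xrightarrow{\sim} U(\frg)|_{\caA_1}(f)$ with $\psi(v_\mu) = f$. On the other hand, $\tau \circ \vf : M_\mu \to \caA_1$ is a $\frg$-homomorphism sending $v_\mu$ to $f$; since $\tau$ intertwines the $\frg$-action by construction of $\caA$, its image is exactly $U(\frg)|_{\caA}(f) = U(\frg)|_{\caA_1}(f)$. Two $\frg$-homomorphisms out of a Verma module that agree on the highest weight vector coincide, so $\tau \circ \vf = \psi$. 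Since $\psi$ is an isomorphism and $\tau$ is a linear bijection, $\vf$ must be injective.

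The only non-routine step is confirming that nothing in the above argument circularly assumes Verma's theorem. The dimension bound and the $W$-orbit statement are genuine outputs of Lemma \ref{equni1} and Lemma \ref{eqcen2}, whose proofs rest on the PDE framework (and on Lemma \ref{eqgra1}, to be proved independently in Section 6), not on Theorem \ref{Verma1}. The main conceptual point to highlight is that the isomorphism $\psi$ in Lemma \ref{eqcen2} was built by showing that $\tau(u) \neq 0$ whenever $u \in U(\bar\frn)$ is nonzero, via $T(u|_{\caA_1}(f)) = T(\tau(u))\, T(f) \neq 0$; this freeness is precisely what promotes the tautological surjection $M_\mu \twoheadrightarrow U(\frg)(v)$ to an isomorphism, and hence delivers injectivity of $\vf$.
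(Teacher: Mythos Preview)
Your proof is correct and follows exactly the approach the paper indicates: the paper does not spell out a proof of Theorem \ref{Verma1} but simply remarks that it can be recovered from Lemma \ref{equni1} and Lemma \ref{eqcen2}, and you have carried this out in detail. Your handling of $(\mathrm{\rmnum{2}})$ via the identification $\tau\circ\vf=\psi$ is precisely the intended argument, and your remark on non-circularity is apt.
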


It follows from the above theorem that we can write $M_\mu\subset M_\lambda$ whenever $\Hom_\frg(M_\mu, M_\lambda)\neq0$.

\begin{lemma}\label{eqcom3}
Let $u\in U(\bar\frn)$, $\alpha\in\Delta$ and $s\in\bbC$. Then there exists $c\in\bbC$ such that $\eta_\alpha^{c}(u|_{\caA_1})\in U(\bar\frn)|_{\caA_1}\eta_\alpha^{s}$ and $c-s\in\bbN$.
\end{lemma}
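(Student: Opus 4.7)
\emph{Proof proposal.} The plan is to proceed by induction on the total degree $|a|$ of a PBW monomial $u = E^a\in U(\bar\frn)$, after first establishing a complex-exponent analogue of Lemma \ref{commutator}.

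\emph{Step 1 (key commutation identity).} For every $\beta\in\Phi^+$ and $c\in\bbC$, I will show
\[
\eta_\alpha^c\,\eta_\beta \;=\; \eta_\beta\,\eta_\alpha^c + \sum_{i=1}^{3}\binom{c}{i}\,(\ad\eta_\alpha)^i(\eta_\beta)\,\eta_\alpha^{c-i}.
\]
Since $u\mapsto u|_{\caA_1}$ is a Lie algebra homomorphism, each $(\ad\eta_\alpha)^i(\eta_\beta)$ is the restriction of $(\ad E_{-\alpha})^i(E_{-\beta})\in U(\bar\frn)$, which by Theorem \ref{Chavelley basis} is either zero or a scalar multiple of a root vector, and in particular lies in $U(\bar\frn)|_{\caA_1}$. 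When $c\in\bbN$ the identity reduces to Lemma \ref{commutator} applied to $E_{-\alpha}$ and $E_{-\beta}$. For arbitrary $c$, both sides lie in a bimodule of the form $S_{\alpha,c}$ introduced in the proof of Lemma \ref{eqcom1}, and the polynomial-in-$c$ coefficients of their difference vanish on $\bbN$, hence vanish identically.

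\emph{Step 2 (induction on $|a|$).} For $u = 1$ take $c = s$. For $|a|\geq 1$, write $u = E_{-\beta}\tilde u$ so that $u|_{\caA_1} = \eta_\beta\cdot\tilde u|_{\caA_1}$. By the induction hypothesis, there exist $c_0\in\bbC$ with $c_0-s\in\bbN$ and $\tilde v\in U(\bar\frn)$ with $\eta_\alpha^{c_0}(\tilde u|_{\caA_1}) = \tilde v|_{\caA_1}\,\eta_\alpha^s$. For any $k\in\bbN$, left-multiplying by $\eta_\alpha^k = E_{-\alpha}^k|_{\caA_1}$ yields
\[
\eta_\alpha^{c_0+k}(\tilde u|_{\caA_1}) = (E_{-\alpha}^k\tilde v)|_{\caA_1}\,\eta_\alpha^s \in U(\bar\frn)|_{\caA_1}\,\eta_\alpha^s,
\]
so the conclusion of the induction hypothesis still holds with $c_0$ replaced by any $c_0+k$. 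Now apply Step 1 with $c = c_0 + 3$ (note $c-s\in\bbN$):
\[
\eta_\alpha^c(u|_{\caA_1}) = \eta_\beta\,\eta_\alpha^c(\tilde u|_{\caA_1}) + \sum_{i=1}^{3}\binom{c}{i}(\ad\eta_\alpha)^i(\eta_\beta)\cdot\eta_\alpha^{c-i}(\tilde u|_{\caA_1}).
\]
For $i=0,1,2,3$ each exponent $c-i$ is at least $c_0$ and differs from $s$ by a nonnegative integer, so by the strengthened hypothesis each factor $\eta_\alpha^{c-i}(\tilde u|_{\caA_1})$ lies in $U(\bar\frn)|_{\caA_1}\,\eta_\alpha^s$. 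Since $\eta_\beta$ and $(\ad\eta_\alpha)^i(\eta_\beta)$ are in $U(\bar\frn)|_{\caA_1}$, the entire right-hand side lies in $U(\bar\frn)|_{\caA_1}\,\eta_\alpha^s$, as required. Extending from PBW monomials to all $u\in U(\bar\frn)$ is immediate, since choosing $c$ equal to the maximum of the values furnished for each monomial (plus an integer to align with $s$) works simultaneously.

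\emph{Main obstacle.} The real technical content is Step 1; one must identify precisely the bimodule in which both sides live so that the polynomial-in-$c$ argument of Lemma \ref{eqcom1} applies verbatim. After that the induction is essentially bookkeeping, crucially relying on the fact that only finitely many terms ($i\leq 3$) appear on the right of the commutation identity, which is guaranteed by the root-string bound already used in Lemma \ref{commutator}.
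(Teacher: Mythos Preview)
Your proposal is correct and follows essentially the same route as the paper. The paper compresses your Step~1 into the single formula $\eta_\alpha^{c}\eta_\beta=(u'(c)|_{\caA_1})\eta_\alpha^{c-3}$ for a suitable $u'(c)\in U(\bar\frn)$ with coefficients polynomial in $c$, proved first for $c\in\bbN$ via Lemma~\ref{commutator} and then extended to $c\in\bbC$ by the polynomial argument of Lemma~\ref{eqcom1}; your expanded identity is exactly this with $u'(c)$ written out. Your Step~2 induction is precisely what the paper means by ``repeated applications'' of that formula, and your bookkeeping with $c=c_0+3$ and the freedom to raise $c_0$ by nonnegative integers matches the paper's intent (and is made explicit there only later, in the proof of Proposition~\ref{prop2}).
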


\begin{proof}
In view of Lemma \ref{commutator}, there exists $u'(c)\in U(\bar\frn)$ such that
\begin{equation}\label{eqcom31}
\eta_\alpha^{c}\eta_\beta=(u'(c)|_{\caA_1})\eta_\alpha^{c-3}
\end{equation}
for each $\beta\in\Phi^+$ and $c\in\bbN$. The coefficients of $u'(c)$ in the PBW basis depend polynomially on $c$. In the spirit of Lemma \ref{eqcom1}, we can show that (\ref{eqcom31}) is also true for $c\in\bbC$. Then the assertion follows from repeated applications of (\ref{eqcom31}).
\end{proof}

\begin{prop}\label{prop2}
Let $\lambda,\mu\in\frh^*$ and $\alpha\in\Delta$. Denote $s=\langle\lambda+\rho,\alpha^\vee\rangle$ and $t=\langle\mu+\rho,\alpha^\vee\rangle$. Suppose that $M_\mu\subset M_\lambda$.
\begin{itemize}
\item[$\mathrm{(\rmnum{1})}$] If $t\in\bbZ^{\leq0}$, then $M_{\mu}\subset M_{s_\alpha\cdot\lambda}.$
\item[$\mathrm{(\rmnum{2})}$] If $t\not\in\bbZ^{\leq0}$, then $M_{s_\alpha\cdot\mu}\subset M_{s_\alpha\cdot\lambda}.$
\end{itemize}
\end{prop}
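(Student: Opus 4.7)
The plan is to work inside $\caA_1$ with the $\lambda$-framework, realize $M_{s_\alpha\cdot\lambda}$ as an explicit submodule of $\caA_1$, and transfer the given singular vector across using the commutation Lemma \ref{eqcom3}. From the hypothesis $M_\mu\subset M_\lambda$, Lemma \ref{eqcen2} together with Theorem \ref{thm2} supplies $w\in W$ with $\mu=w\cdot\lambda$ and a polynomial weighted solution $f=c\cdot w(1)\in\caA$ of the system \ref{eqsin1}, of weight $\mu$; I write $u:=\tau^{-1}(f)\in U(\bar\frn)$, so $f=(u|_{\caA_1})(1)$.

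I would then set $g:=\eta_\alpha^s(1)\in\caA_1$; equations (\ref{eqthm11})--(\ref{eqthm12}) show that $g$ is a weighted solution of weight $s_\alpha\cdot\lambda$, and Lemma \ref{eqcen2} yields an isomorphism
\[
M_{s_\alpha\cdot\lambda}\simeq M':=U(\bar\frn)|_{\caA_1}\cdot g\subset\caA_1.
\]
Together with the uniqueness of weighted solutions in Lemma \ref{equni1}, this reduces the two conclusions of the proposition to the following memberships inside $\caA_1$: (i) $f\in M'$, which provides a singular vector of weight $\mu$ in $M_{s_\alpha\cdot\lambda}$; and (ii) $\eta_\alpha^t(f)=s_\alpha(f)\in M'$, which provides a singular vector of weight $s_\alpha\cdot\mu$.

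To produce these memberships, I would invoke Lemma \ref{eqcom3} applied to $(u,s)$: it yields $c_0\in\bbC$ with $c_0-s\in\bbN$ and $u_0\in U(\bar\frn)$ such that $\eta_\alpha^{c_0}(u|_{\caA_1})=(u_0|_{\caA_1})\eta_\alpha^s$, whence evaluation at $1$ gives $\eta_\alpha^{c_0}(f)=(u_0|_{\caA_1})(g)\in M'$. A complementary computation via Lemma \ref{eqcom1} shows $d_\alpha(\eta_\alpha^c f)=c(t-c)\eta_\alpha^{c-1}(f)$, so within the family $\{\eta_\alpha^c(f)\}_{c\in\bbC}$, weighted solutions occur exactly at $c=0$ (giving $f$) and $c=t$ (giving $\eta_\alpha^t(f)$).

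The main obstacle is the final dichotomy: converting the general membership $\eta_\alpha^{c_0}(f)\in M'$ into a membership of one of the two distinguished weighted solutions. I expect this to hinge on the integrality hypothesis on $t$. In case (i), where $t\in\bbZ^{\leq 0}$, the operator $\eta_\alpha^{-t}$ is a genuine element of $U(\bar\frn)|_{\caA_1}$, and by absorbing suitable integer powers of $\eta_\alpha$ into $u_0$ (using the freedom $c_0\mapsto c_0+k$, $k\in\bbN$, afforded by left-multiplying the identity of Lemma \ref{eqcom3} by $\eta_\alpha^k$), the membership can be pushed down to $f\in M'$, yielding (i). In case (ii), where $t\notin\bbZ^{\leq 0}$, no such integer-power inverse of $\eta_\alpha^t$ is available, so the only attainable weighted solution lying in $M'$ is $\eta_\alpha^t(f)$ itself, secured by a parallel application of Lemma \ref{eqcom3} together with the annihilation $d_\alpha(\eta_\alpha^t f)=0$ extracted from Lemma \ref{eqcom1}.
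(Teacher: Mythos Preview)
Your setup is exactly the paper's: realize $M_{s_\alpha\cdot\lambda}$ as $M'=U(\bar\frn)|_{\caA_1}\eta_\alpha^s(1)$ via Lemma \ref{eqcen2}, use Lemma \ref{eqcom3} to land $\eta_\alpha^{c_0}(f)$ in $M'$, and record the identity $d_\alpha(\eta_\alpha^c f)=c(t-c)\eta_\alpha^{c-1}(f)$ from Lemma \ref{eqcom1}. The gap is in the last paragraph, where the descent mechanism is misdescribed.

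You cannot ``push down'' from $\eta_\alpha^{c_0}(f)\in M'$ to $f\in M'$ by ``absorbing integer powers of $\eta_\alpha$ into $u_0$'' or by replacing $c_0$ with $c_0+k$: both moves only \emph{raise} the exponent, whereas you need to lower it, and negative powers of $\eta_\alpha$ are not in $U(\bar\frn)|_{\caA_1}$. The remark that $\eta_\alpha^{-t}\in U(\bar\frn)|_{\caA_1}$ when $t\in\bbZ^{\leq 0}$ is irrelevant here. The actual lever is the one you wrote down but did not exploit: $M'$ is a $U(\frg)$-submodule, hence \emph{stable under $d_\alpha$}. Thus from $\eta_\alpha^{c}(f)\in M'$ and $d_\alpha(\eta_\alpha^c f)=c(t-c)\eta_\alpha^{c-1}(f)$ one gets $\eta_\alpha^{c-1}(f)\in M'$ whenever $c(t-c)\neq 0$. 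This is the paper's step (\ref{eqprop21})--(\ref{eqprop22}), and it is what does all the work.

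For this stepwise descent to terminate at the right value you also need the arithmetic the paper uses: $M_\mu\subset M_\lambda$ forces $\lambda-\mu$ into the root lattice, so $s-t\in\bbZ$; combined with $c_0-s\in\bbN$ and the freedom to enlarge $c_0$, one may arrange $c_0-t\in\bbN$ and $\Ree(c_0)\geq 0$. In case (\rmnum{1}) this gives $c_0\in\bbN$, and for $c>0$ one has $t-c<0$, so $c(t-c)\neq 0$ and the descent reaches $c=0$, i.e.\ $f\in M'$. In case (\rmnum{2}) the condition $t\notin\bbZ^{\leq 0}$ together with $c-t\in\bbN$ forces $c\neq 0$, so $c(t-c)\neq 0$ while $c>t$ and the descent stops exactly at $c=t$, i.e.\ $\eta_\alpha^t(f)\in M'$. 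Once you make this mechanism explicit (and note $s-t\in\bbZ$), your argument coincides with the paper's.
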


\begin{proof}
Sicne $M_\mu\subset M_\lambda$, we have $s-t=\langle\lambda-\mu,\alpha^\vee\rangle\in\bbZ$. Moreover, there exists $u\in U(\bar\frn)$ such that $v_\mu=u v_\lambda$. Then $\tau(v_\mu)=\tau(uv_\lambda)=u|_{\caA_1}(1)$ is a polynomial solution of the system $\ref{eqsin1}$, with weight $\mu$. The above Lemma insures the existence of $c\in\bbC$ such that $c-s\in\bbN$ and
$\eta_\alpha^{c}(u|_{\caA_1})\in U(\bar\frn)|_{\caA_1}\eta_\alpha^{s}$. By Lemma \ref{eqcen2}, we can identify $U(\frg)|_{\caA_1}\eta_\alpha^{s}(1)$ with $M_{s_\alpha\cdot\lambda}$ as $U(\frg)$-modules. Therefore
\begin{equation}\label{eqprop20}
\begin{aligned}
\eta_\alpha^{c}(u|_{\caA_1}(1))\in U(\bar\frn)|_{\caA_1}\eta_\alpha^{s}(1)\subset M_{s_\alpha\cdot\lambda}.
\end{aligned}
\end{equation}
It is easy to see that we can increase $\Ree(c)$ in Lemma \ref{eqcom3}, such that $\Ree(c)\geq0$ and $c-t=(c-s)+(s-t)\in\bbN$. It follows from Lemma \ref{eqcom1} that
\begin{equation}\label{eqprop21}
\begin{aligned}
\left[d_\alpha, \eta_\alpha^{c}\right](u|_{\caA_1}(1))
=&c\eta_\alpha^{c-1}(1-c+\zeta_\alpha)(u|_{\caA_1}(1))\\
=&c(t-c)\eta_\alpha^{c-1}(u|_{\caA_1}(1)).
\end{aligned}
\end{equation}
Since $u|_{\caA_1}(1)$ is a solution of the system, the bracket on the left is
\begin{equation}\label{eqprop22}
d_\alpha\eta_\alpha^{c}(u|_{\caA_1}(1))-\eta_\alpha^{c}d_\alpha (u|_{\caA_1}(1))
=d_\alpha\eta_\alpha^{c}(u|_{\caA_1}(1))\in M_{s_\alpha\cdot\lambda}.
\end{equation}

For (\rmnum{1}), since $\Ree(c)\geq0$, we have $c=(c-t)+t\in\bbN$. If $c=0$, then $u|_{\caA_1}(1)\in M_{s_\alpha\cdot\lambda}$ by (\ref{eqprop20}). Thus Lemma \ref{eqcen2} yields the desired embedding $M_{\mu}\subset M_{s_\alpha\cdot\lambda}.$ If $c>0$, then $t-c\leq-c<0$. Hence (\ref{eqprop21}) and (\ref{eqprop22}) imply that $$\eta_\alpha^{c-1}(u|_{\caA_1}(1))\in M_{s_\alpha\cdot\lambda}.$$ So we can reduce $c$ stepwise and eventually get $u|_{\caA_1}(1)\in M_{s_\alpha\cdot\lambda}$.

For (\rmnum{2}), since $c-t\in\bbN$, we obtain $c=(c-t)+t\neq0$ when $t\not\in\bbZ^{\leq0}$. If $c-t=0$, then $\eta_\alpha^{t}(u|_{\caA_1}(1))\in M_{s_\alpha\cdot\lambda}$ by (\ref{eqprop20}). Again by Lemma \ref{eqcen2}, we have $M_{s_\alpha\cdot\mu}\subset M_{s_\alpha\cdot\lambda}.$ If $c-t>0$, we can get $\eta_\alpha^{c-1}(u|_{\caA_1}(1))\in M_{s_\alpha\cdot\lambda}$ and thus eventually get $\eta_\alpha^{t}(u|_{\caA_1}(1))\in M_{s_\alpha\cdot\lambda}$ by a similar reduction.
\end{proof}

Recall that there exists a natural lexicographic ordering ``$\prec$" on $\bbC$, that is, we write $c_1\prec c_2$ if $\Ree(c_1-c_2)<0$ or $\Ree(c_1-c_2)=0$ and $\Imm(c_1-c_2)\leq0$ for $c_1, c_2\in\bbC$. With this ordering on $\bbC$, we can also define a natural partial ordering on $\frh^*$. Write $\lambda\prec\mu$ if and only if $\lambda-\mu=\sum_{\alpha\in\Delta}k_\alpha\alpha$, with all $k_\alpha\prec0$. Call $\lambda\in\frh^*$ {\it strictly antidominant} if $\langle\lambda+\rho,\beta^\vee\rangle\prec 0$ for all $\beta\in\Phi^+$. A lemma follows immediately from the definitions.

\begin{lemma}\label{eqantido}
For $\lambda\in\frh^*$, the following conditions are equivalent:
\begin{itemize}
\item[$\mathrm{(\rmnum{1})}$] $\lambda$ is strictly antidominant;
\item[$\mathrm{(\rmnum{2})}$] $\lambda-s_\alpha\cdot\lambda\prec0$ for all $\alpha\in\Delta$;
\item[$\mathrm{(\rmnum{3})}$] $\lambda\prec w\cdot\lambda\prec w_0\cdot\lambda$ for all $w\in W$, where $w_0$ is the longest element in $W$.
\end{itemize}
Therefore there exists the unique strictly antidominant weight in the orbit $W\cdot\lambda$.
\end{lemma}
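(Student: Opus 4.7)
The plan is to prove the three implications in the cycle (i)$\Rightarrow$(iii)$\Rightarrow$(ii)$\Rightarrow$(i), and then read off the uniqueness statement from antisymmetry of $\prec$, with existence coming from a minimality argument on the finite orbit $W\cdot\lambda$. The key identity I will use throughout is
\[
s_\alpha\cdot\mu \;=\; \mu-\langle\mu+\rho,\alpha^\vee\rangle\alpha,
\]
so that $\mu-s_\alpha\cdot\mu=\langle\mu+\rho,\alpha^\vee\rangle\alpha$ has only one nonzero simple-root coefficient.

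First I will check the auxiliary fact that if $c_1,\dots,c_k\prec 0$ in $\bbC$ and $a_1,\dots,a_k\geq 0$ (not all zero) are real, then $\sum a_j c_j\prec 0$. This is a short case check on the sign of $\Ree$: if some $a_j$ with $\Ree(c_j)<0$ has $a_j>0$, the real part of the sum is strictly negative; otherwise every $c_j$ contributing lies on the non-positive imaginary axis and the sum does too. Using this and the fact that any positive root $\beta$ has $\beta^\vee=\sum_i c_i\alpha_i^\vee$ with $c_i=c_i(\beta)\geq 0$ (not all zero), the equivalence (i)$\Leftrightarrow$(ii) is immediate from the one-root identity above, because $\langle\lambda+\rho,\alpha^\vee\rangle\prec 0$ for all $\alpha\in\Delta$ forces $\langle\lambda+\rho,\beta^\vee\rangle\prec 0$ for every $\beta\in\Phi^+$.

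For (i)$\Rightarrow$(iii) I will proceed by induction on $\ell(w)$. Writing $w=s_\alpha w'$ with $\ell(w)=\ell(w')+1$, a standard Weyl-group fact gives $(w')^{-1}\alpha\in\Phi^+$. Then
\[
w\cdot\lambda-\lambda \;=\; (w'\cdot\lambda-\lambda)\;-\;\langle\lambda+\rho,(w')^{-1}\alpha^\vee\rangle\alpha,
\]
whose simple-root coefficients are $\succ 0$: the first summand by induction, and the coefficient of $\alpha$ because $(w')^{-1}\alpha\in\Phi^+$ together with strict antidominance makes the scalar $\prec 0$. This gives $\lambda\prec w\cdot\lambda$. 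The inequality $w\cdot\lambda\prec w_0\cdot\lambda$ I will deduce by running the mirror-image argument: first observe that $w_0\cdot\lambda$ is strictly dominant (since $w_0^{-1}$ carries $\Phi^+$ into $-\Phi^+$), and then for any $v\in W$ the induction as above, applied with signs reversed, shows that strict dominance of $\nu$ implies $v\cdot\nu\prec\nu$. Taking $\nu=w_0\cdot\lambda$ and $v=ww_0^{-1}$ gives the desired inequality. The implication (iii)$\Rightarrow$(ii) is obtained by specializing $w=s_\alpha$.

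For the final assertion, uniqueness is clean: if $\mu_1,\mu_2$ in $W\cdot\lambda$ are both strictly antidominant, then (iii) gives $\mu_1\prec\mu_2$ and $\mu_2\prec\mu_1$, and antisymmetry of $\prec$ (which is the one part of ``$\prec$ is a partial order on $\frh^*$'' worth checking, via antisymmetry on $\bbC$ together with linear independence of $\Delta$) forces $\mu_1=\mu_2$. For existence, since $W\cdot\lambda$ is finite and $\prec$ is reflexive, pick any $\prec$-minimal element $\mu$ of $W\cdot\lambda$; if $\mu$ were not strictly antidominant, there would exist $\alpha\in\Delta$ with $\langle\mu+\rho,\alpha^\vee\rangle\not\prec 0$, whence $-\langle\mu+\rho,\alpha^\vee\rangle\prec 0$ and is nonzero, so $s_\alpha\cdot\mu-\mu$ is a nonzero negative multiple of $\alpha$ and $s_\alpha\cdot\mu\prec\mu$ strictly, contradicting minimality. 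I expect the only real obstacle to be bookkeeping around the non-strict lexicographic order $\prec$ on $\bbC$ (in particular that $0\prec 0$), so I will be careful to state ``$c\not\prec 0$'' as the strict opposite inequality when negating to produce $\prec 0$.
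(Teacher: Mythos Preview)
Your argument is correct. The paper itself gives no proof of this lemma beyond the remark ``A lemma follows immediately from the definitions,'' so there is no alternative approach to compare against; you have supplied a careful verification of what the author leaves implicit. A few small comments: your initial plan announces the cycle (i)$\Rightarrow$(iii)$\Rightarrow$(ii)$\Rightarrow$(i), but what you actually carry out is (i)$\Leftrightarrow$(ii) directly, then (i)$\Rightarrow$(iii), then (iii)$\Rightarrow$(ii), which is fine but slightly mismatched with the stated plan. In the existence step, the relevant property is not reflexivity of $\prec$ but that a finite poset always has minimal elements; your contradiction argument then works as written. The use of $\beta^\vee=\sum_i c_i\alpha_i^\vee$ with $c_i\geq 0$ is justified by the standard fact that $\Delta^\vee$ is a simple system for the coroot root system $\Phi^\vee$.
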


Given $\lambda,\mu\in\frh^*$, write $\mu\uparrow\lambda$ if there exists $\gamma\in\Phi^+$ such that $\mu=s_\gamma\cdot\lambda$ and
$\langle\lambda+\rho,\gamma^\vee\rangle\in\mathbb{Z}^{>0}$. In general, if $\mu=\lambda$ or there exist $\gamma_1,
\ldots, \gamma_r\in\Phi^+$ such that
\[
\mu=(s_{\gamma_1}\ldots
s_{\gamma_r})\cdot\lambda\uparrow(s_{\gamma_2}\ldots
s_{\gamma_r})\cdot\lambda\uparrow\ldots\uparrow
s_{\gamma_r}\cdot\lambda\uparrow\lambda.
\]
we say that $\mu$ is {\it strongly linked} to $\lambda$ by $\gamma_1,
\ldots, \gamma_r$ and
write $\mu\uparrow\lambda$.

With Proposition \ref{prop2} in hand,  we can give an elementary approach to the following well-known result of Verma, BGG and Jantzen \cite{V,BGG1,J}.

\begin{theorem}\label{BGG-V thm}
Let $\lambda,\mu\in\frh^*$.
Then $M_\mu\subset M_\lambda$ if and only if $\mu$
is strongly linked to $\lambda$.
\end{theorem}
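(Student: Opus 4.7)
The plan is to prove both directions by induction, with Proposition \ref{prop2} serving as the central tool for necessity and the explicit construction of polynomial singular vectors in the differential-operator framework handling the one-step case for sufficiency.

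For sufficiency, by transitivity of $\uparrow$ it suffices to treat a single step: given $\gamma\in\Phi^+$ with $n:=\langle\lambda+\rho,\gamma^\vee\rangle\in\mathbb{Z}^{>0}$, produce an embedding $M_{s_\gamma\cdot\lambda}\hookrightarrow M_\lambda$. When $\gamma=\alpha$ is simple, the series in (\ref{eqet2}) defining $\eta_\alpha^n$ truncates, so $\eta_\alpha^n(1)\in\caA$ is a polynomial; by (\ref{eqthm11})--(\ref{eqthm12}) it is a nonzero weighted solution of the system (\ref{eqsin1}) of weight $s_\alpha\cdot\lambda$, and Lemma \ref{eqcen2} yields the embedding. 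For general $\gamma$, induct on $\htt\gamma$ using the reflection identity $s_\gamma=s_\alpha s_{s_\alpha\gamma}s_\alpha$ for a simple $\alpha$ with $\langle\gamma,\alpha^\vee\rangle>0$, so that $\htt(s_\alpha\gamma)<\htt\gamma$; composing three shorter-height embeddings produced by the inductive hypothesis (after verifying the requisite positivity conditions at each stage) delivers the desired one.

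For necessity, by Theorem \ref{Verma1} we have $\mu=w\cdot\lambda$ for some $w\in W$, and one inducts on $\ell(w)$, taking $w$ of minimal length in its coset modulo the stabilizer of $\lambda$. The base case $w=1$ is trivial. For $\ell(w)\geq 1$, choose a simple $\alpha$ with $s:=\langle\lambda+\rho,\alpha^\vee\rangle\in\mathbb{Z}^{>0}$; such $\alpha$ must exist, since otherwise $\lambda$ would be strictly antidominant and Lemma \ref{eqantido} would force $w\cdot\lambda\notin\lambda-\sum_i\mathbb{N}\alpha_i$ for $w\neq 1$, contradicting that $\mu$ is a weight of $M_\lambda$. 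Apply Proposition \ref{prop2}. In case (i), $M_\mu\subset M_{s_\alpha\cdot\lambda}$ with a shorter linking Weyl element, so the inductive hypothesis gives $\mu\uparrow s_\alpha\cdot\lambda$; combined with $s_\alpha\cdot\lambda\uparrow\lambda$ (which holds because $s\in\mathbb{Z}^{>0}$), we conclude $\mu\uparrow\lambda$. In case (ii), $M_{s_\alpha\cdot\mu}\subset M_{s_\alpha\cdot\lambda}$, the inductive hypothesis gives $s_\alpha\cdot\mu\uparrow s_\alpha\cdot\lambda$, and combined with $\mu\uparrow s_\alpha\cdot\mu$ (from $t\in\mathbb{Z}^{>0}$, where integrality is forced by $\mu=w\cdot\lambda$ having the same infinitesimal character as $\lambda$) and $s_\alpha\cdot\lambda\uparrow\lambda$, the chain $\mu\uparrow\lambda$ follows.

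The hardest step is ensuring the induction actually terminates in case (ii) of the necessity argument: the height $\htt(s_\alpha\cdot\lambda-s_\alpha\cdot\mu)$ may exceed $\htt(\lambda-\mu)$, so one cannot induct on height directly; instead one must induct on $\ell(w)$ and verify via Weyl group combinatorics that $\ell(w)$ strictly decreases after the simple reflection. A secondary subtlety is the handling of singular $\lambda$, where the element $w$ with $\mu=w\cdot\lambda$ is not unique and one must pass to a minimal coset representative. The non-simple case of sufficiency also requires careful verification that the positivity conditions $\langle\lambda+\rho,\alpha^\vee\rangle>0$ propagate through the reflection decomposition.
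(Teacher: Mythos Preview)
Your necessity argument has a genuine gap in the choice of the simple root $\alpha$. You require $s=\langle\lambda+\rho,\alpha^\vee\rangle\in\mathbb{Z}^{>0}$ and claim that if no such $\alpha$ exists then $\lambda$ is strictly antidominant. This implication is false. Take $\Phi=A_2$ with $\langle\lambda+\rho,\alpha_1^\vee\rangle=\tfrac{1}{2}$ and $\langle\lambda+\rho,\alpha_2^\vee\rangle=\tfrac{3}{2}$: neither simple value is an integer, yet $\langle\lambda+\rho,(\alpha_1+\alpha_2)^\vee\rangle=2$, so $\mu:=s_{\alpha_1+\alpha_2}\cdot\lambda$ satisfies $M_\mu\subsetneq M_\lambda$ and your induction cannot start. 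The paper avoids this by a different set-up: it fixes the strictly antidominant weight $\lambda_0\in W\cdot\lambda$, writes $\lambda=w\cdot\lambda_0$, and inducts on $\ell(w)$. The simple root $\alpha$ is chosen so that $0\prec s\neq 0$ in the lexicographic order on $\bbC$ (no integrality required); this is exactly the condition $w^{-1}\alpha<0$, guaranteeing $\ell(s_\alpha w)<\ell(w)$. Integrality of $s$ is then \emph{deduced} a posteriori in case~(i) from $s-t\in\bbZ$ and $t\in\bbZ^{\le 0}$.

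A second problem is your induction parameter. You induct on $\ell(w)$ with $\mu=w\cdot\lambda$, but in case~(ii) the pair $(s_\alpha\cdot\mu,\,s_\alpha\cdot\lambda)$ is related by $s_\alpha w s_\alpha$, whose length need not drop; your final paragraph acknowledges this but does not resolve it. The paper's induction on the length of the element carrying $\lambda_0$ to $\lambda$ sidesteps this entirely: after applying Proposition~\ref{prop2}, the new top weight is $s_\alpha\cdot\lambda=(s_\alpha w)\cdot\lambda_0$ in \emph{both} cases, and $\ell(s_\alpha w)<\ell(w)$ by construction. The remaining work is to push the strong-linkage chain obtained from the inductive hypothesis through the extra $s_\alpha$, which the paper does by a careful case split on whether $\alpha$ appears among the $\gamma_i$.

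For sufficiency, your height-induction on $\gamma$ via $s_\gamma=s_\alpha s_{s_\alpha\gamma}s_\alpha$ is the classical Verma route and can be made to work, but ``verifying the requisite positivity conditions at each stage'' is not automatic: $\langle\lambda+\rho,\alpha^\vee\rangle$ need not be a positive integer, so the outer factors are not always embeddings and one must argue separately in that case. The paper instead runs a downward induction on $\ell(w')$ with $\mu=w'\cdot\lambda_0$, starting from $w_0$, and again uses Proposition~\ref{prop2}; this mirrors the necessity proof and keeps the argument uniform.
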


\begin{proof}
By Lemma \ref{eqantido}, let $\lambda_0\in\frh^*$ be the unique strictly antidominant weight in $W\cdot\lambda$. There exists $w,w'\in W$ such that $\lambda=w\cdot\lambda_0$ and $\mu=w'\cdot\lambda_0$. Once $\alpha\in\Delta$ is fixed in the context, denote $s=\langle\lambda+\rho,\alpha^\vee\rangle$ and $t=\langle\mu+\rho,\alpha^\vee\rangle$.

(1) In one direction, assume that $M_\mu\subset M_\lambda$. Then $s-t=\langle\lambda-\mu,\alpha^\vee\rangle\in\bbZ$. The proof goes by induction on $l(w)$, where $l(w)$ is the usual length function on $W$. If $\lambda=\lambda_0$ (e.g., $w=1$), then $\lambda\prec w'\cdot\lambda=\mu$ by Lemma \ref{eqantido}. On the other hand, $M_\mu\subset M_\lambda$ implies $\lambda-\mu\in\sum_{\alpha\in\Delta}\bbN\alpha$. We obtain $\mu\prec\lambda$ and thus $\lambda=\mu$. Now suppose that $\lambda=w\cdot\lambda_0$ is not strictly antidominant. Then there exists $\alpha\in\Delta$ such that $0\prec\lambda -s_\alpha\cdot\lambda\neq0$, that is,
\begin{equation}\label{eqBGGV1}
0\prec s=\langle\lambda+\rho,\alpha^\vee\rangle
=\langle\lambda_0+\rho,(w^{-1}\alpha)^\vee\rangle\neq0.
\end{equation}
It turns out $w^{-1}\alpha<0$ and thus $l(s_\alpha w)<l(w)$. In view of Proposition \ref{prop2}, if $t\in\bbZ^{\leq0}$, then $M_\mu\subset M_{s_\alpha\cdot\lambda}$. The induction hypothesis can be applied to $l(s_\alpha w)$, showing that $\mu\uparrow s_\alpha\cdot\lambda$. Moreover, since $s=(s-t)+t\in\bbZ$, we obtain $s\in\bbN$ and $s_\alpha\cdot\lambda\uparrow\lambda$ by (\ref{eqBGGV1}). Therefore one has $\mu\uparrow\lambda$. If $t\not\in\bbZ^{\leq0}$, then $M_{s_\alpha\cdot\mu}\subset M_{s_\alpha\cdot\lambda}$. Thanks to the induction hypothesis, we can assume that $s_\alpha\cdot\mu\uparrow s_\alpha\cdot\lambda$ by $\gamma_1,\ldots, \gamma_k$. If $\gamma_i\neq\alpha$ for $i=1,\ldots,k$, then $\mu\uparrow\lambda$ by $s_\alpha\gamma_1,\ldots,s_\alpha\gamma_k$. Otherwise let $i_0$ be the smallest positive integer such that $\gamma_{i_0}=\alpha$. Denote $\nu=(s_{\gamma_{i_0+1}}\cdots s_{\gamma_k})\cdot(s_\alpha\cdot\lambda)$. Then $\langle\nu+\rho,\alpha^\vee\rangle\in\bbN$ and $\langle s_\alpha\cdot\lambda-\nu,\alpha^\vee\rangle\in\bbZ$ since $s_\alpha\cdot\mu\uparrow s_\alpha\cdot\nu\uparrow\nu\uparrow s_\alpha\cdot\lambda$. It follows that $\langle\lambda+\rho,\alpha^\vee\rangle\in\bbZ$. In view of (\ref{eqBGGV1}), we obtain $\langle\lambda+\rho,\alpha^\vee\rangle\in\bbN$ and thus $\mu\uparrow\nu\uparrow\lambda$ by $s_\alpha\gamma_1,\ldots,s_\alpha\gamma_{i_0-1},\gamma_{{i_0}+1},\ldots,\gamma_k,\alpha$.

(2) In the other direction, assume that $\mu\uparrow\lambda$ by $\gamma_1, \ldots, \gamma_k$. It is obvious that $\mu\prec\lambda$. Now use downward induction on $l(w')$, starting with $\mu=w_0\cdot\lambda_0$ (e.g., $w'=w_0$). Then $\lambda\prec\mu$ by Lemma \ref{eqantido}. One must have $\lambda=\mu$ and $M_\mu= M_\lambda$. Now suppose that $\mu\neq w_0\cdot\lambda_0$. Then there exists $\alpha\in\Delta$ such that
\begin{equation}\label{eqBGGV2}
0\neq t=\langle\mu+\rho,\alpha^\vee\rangle=\langle\lambda_0+\rho,(w'^{-1}\alpha)^\vee\rangle\prec0.
\end{equation}
We get $w'^{-1}\alpha>0$ and thus $l(s_\alpha w')>l(w')$. If $\gamma_i\neq\alpha$ for $i=1,\ldots,k$, then $s_\alpha\cdot\mu\uparrow s_\alpha\cdot\lambda$ by $s_\alpha\gamma_1,\ldots,s_\alpha\gamma_k$. By the induction hypothesis, we get $M_{s_\alpha\cdot\mu}\subset M_{s_\alpha\cdot\lambda}$. Since (\ref{eqBGGV2}) implies $\langle s_\alpha\cdot\mu+\rho,\alpha^\vee\rangle=-t\not\in\bbZ^{\leq0}$, it follows from Proposition \ref{prop2} that $M_\mu\subset M_\lambda$. Otherwise denote by $i_0$ the smallest positive integer satisfying $\gamma_{i_0}=\alpha$. Denote $\nu=(s_{\gamma_{i_0+1}}\cdots s_{\gamma_k})\cdot\lambda$. Then $\langle\nu+\rho,\alpha^\vee\rangle\in\bbN$ and $\langle \nu-\mu,\alpha^\vee\rangle\in\bbZ$ since $\mu\uparrow s_\alpha\cdot\nu\uparrow\nu$.
It follows that $t=\langle\mu+\rho,\alpha^\vee\rangle\in\bbZ$. With (\ref{eqBGGV2}), one has $-t\in\bbN$. Then $M_{\mu}\subset M_{s_\alpha\cdot\mu}$ since $E_{-\alpha}^{-t}v_{s_\alpha\cdot\mu}$ is a singular vector in $M_{s_\alpha\cdot\mu}$ of weight $\mu$. On the other hand, since $s_\alpha\cdot\mu\uparrow\nu
\uparrow\lambda$ by $s_\alpha\gamma_1,\ldots,s_\alpha\gamma_{i_0-1},\gamma_{{i_0}+1},\ldots,\gamma_k$, the induction hypothesis can be applied, yielding $M_{s_\alpha\cdot\mu}\subset M_\lambda$ and thus $M_{\mu}\subset M_\lambda$.

\end{proof}

\begin{remark}
The sufficiency of the strongly linked condition was discovered by Verma using enumeration of simple Lie algebras \cite{V}. BGG showed the necessity and reproduced the sufficiency by a different method \cite{BGG1}. The necessity can also be proved by the Jantzen filtration and contravariant forms \cite{J}. Their arguments rely on some deep results. The proof in this section seems more ``elementary".
\end{remark}

One immediate consequence of Theorem \ref{BGG-V thm} is

\begin{theorem}\label{thm3}
Let $f\in\caA_1$ be a weighted solution of the system $\ref{eqsin1}$, with weight $\mu$. Then $f$ is a polynomial if and only if $\mu$ is strongly linked to $\lambda$.
\end{theorem}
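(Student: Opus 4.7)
My plan is to deduce Theorem \ref{thm3} as an essentially formal consequence of the results that have already been built up: Theorem \ref{thm2} (all weighted solutions in $\caA_1$ are scalar multiples of $w(1)$), Lemma \ref{equni1} (uniqueness of a weighted solution up to scalar), Theorem \ref{Verma1} (injectivity of any nonzero homomorphism between Verma modules), and most crucially Theorem \ref{BGG-V thm} (strong linkage characterizes embeddings $M_\mu\subset M_\lambda$). Thus the proof is just a translation between the analytic side (polynomial solutions in $\caA$) and the algebraic side (singular vectors in $M_\lambda$).

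For the forward direction, assume the weighted $f\in\caA_1$ of weight $\mu$ is in fact a polynomial, i.e.\ $f\in\caA$. Then $v:=\tau^{-1}(f)\in M_\lambda$ is a nonzero weight vector of weight $\mu$, and since $d_\alpha(f)=0$ for all $\alpha\in\Delta$ we have $E_\alpha\cdot v=0$ for all simple $\alpha$, so $v$ is a singular vector of $M_\lambda$. The assignment $v_\mu\mapsto v$ then defines a nonzero homomorphism $M_\mu\to M_\lambda$, which by Theorem \ref{Verma1}(ii) is injective, giving $M_\mu\subset M_\lambda$. Theorem \ref{BGG-V thm} now yields that $\mu$ is strongly linked to $\lambda$.

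For the reverse direction, assume $\mu$ is strongly linked to $\lambda$. Theorem \ref{BGG-V thm} then furnishes an embedding $M_\mu\hookrightarrow M_\lambda$; let $\tilde v\in M_\lambda$ be the image of $v_\mu$, a singular vector of weight $\mu$. Setting $g:=\tau(\tilde v)\in\caA$, one checks immediately that $g$ is a nonzero weighted polynomial solution of the system \ref{eqsin1} of weight $\mu$. But by Lemma \ref{equni1} any two nonzero weighted solutions in $\caA_1$ of the same weight agree up to a scalar, so $f$ is a scalar multiple of $g$ and hence lies in $\caA$. This completes the biconditional, and as a byproduct shows that when $f$ is polynomial, $\tau^{-1}(f)$ is (up to scalar) the singular vector realizing the embedding $M_\mu\subset M_\lambda$.

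I do not anticipate any real obstacle: the work has all been done in the preceding sections, and the only subtlety to watch is to ensure the existence claim of a weighted solution of weight $\mu$ in $\caA_1$ is not needed a priori (by Lemma \ref{eqcen2}, any nonzero weighted solution has weight in $W\cdot\lambda$, which is consistent with both sides of the biconditional), so the two directions genuinely match up.
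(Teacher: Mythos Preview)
Your proposal is correct and takes essentially the same approach as the paper, which simply states that Theorem \ref{thm3} is ``an immediate consequence of Theorem \ref{BGG-V thm}'' without spelling out the details. Your argument---translating polynomial solutions to singular vectors via $\tau$ for the forward direction, and invoking Lemma \ref{equni1} to identify $f$ with the polynomial solution coming from the BGG--Verma embedding for the reverse direction---is precisely the intended unpacking of that sentence.
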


%
%
\section{Singular vectors of $\frsp(2n)$}
%
%
 Using the notation and results in \cite{Xu2}, we will explore the applications of our main theorems for $\frsp(2n)$ in this section. The case of $\frsl(n,\bbC)$ has been well studied in \cite{Xu1}. Some partial results for $\frsp(2n)$ were given in \cite{Xu2}. With Theorem \ref{thm2} and Theorem \ref{thm3} in hand, a natural goal is to explicitly write down singular vectors in Verma module $M_\lambda$. We did this for $\frsl(n, \bbC)$ in \cite{Xi1}. We will give a similar result for $\frsp(2n)$ in this section.

Let $E_{i,j}$ be the $2n\times 2n$ matrix with $1$ in the $(i,j)$ position and $0$ elsewhere. The symplectic Lie algebra
\[
\begin{aligned}
\frsp(2n)=&\sum_{i,j=1}^n\bbC(E_{i,j}-E_{n+j,n+i})+\sum_{i=1}^n(\bbC E_{i,n+i}+\bbC E_{n+i,i})\\
&+\sum_{1\leq i<j\leq n}[\bbC(E_{i,n+j}+E_{j,n+i})+\bbC(E_{n+i,j}+E_{n+j,i})]
\end{aligned}
\]
is a Lie subalgebra of the special linear algebra $\frsl(2n,\bbC)$. Denote
\[
H_i=E_{i,i}-E_{i+1,i+1}-E_{n+i,n+i}+E_{n+i+1,n+i+1}
\]
for $i=1,2,\ldots,n-1$ and $H_n=E_{n,n}-E_{2n,2n}$. Then
\[
\frh=\sum_{i=1}^n\bbC H_i
\]
is a Cartan subalgebra of $\frsp(2n)$. Choose positive root vectors
\[
\{E_{i,j}-E_{n+j,n+i}, E_{i,n+j}+E_{j,n+i}, E_{k,n+k}\ |\ 1\leq i<j\leq n;\ 1\leq k\leq n\}
\]
and negative root vectors
\[
\{E_{i,j}-E_{n+j,n+i}, E_{n+i,j}+E_{n+j,i}, E_{n+k,k}\ |\ 1\leq j<i\leq n;\ 1\leq k\leq n\}.
\]
Let $e_i$ be the linear function on $\frh$ such that
\[
e_i(E_{j,j}-E_{n+j,n+j})=\delta_{ij}.
\]
Then the corresponding positive roots are
\[
\Phi^+=\{e_i-e_j, e_i+e_j, 2e_k\ |\ 1\leq i<j\leq n;\ 1\leq k\leq n\}.
\]
For convenience, we set
\[
C_{i,j}=E_{i,j}-E_{n+j,n+i},\ C_{i,n+j}=E_{i,n+j}+E_{j,n+i}, \ C_{n+i,j}=E_{n+i,j}+E_{n+j,i}
\]
for $1\leq i, j\leq n$ with $i\neq j$, and
\[
C_{n+k,k}=E_{n+k,k},\  C_{k,n+k}=E_{k,n+k}
\]
for $1\leq k\leq n$. Here $C_{i,n+j}=C_{j,n+i}$ and $C_{n+i,j}=C_{n+j,i}$. Set
\[
\Gamma:=\sum_{1\leq j<i\leq n}\bbN \eps_{i,j}+\sum_{1\leq j\leq i\leq n}\bbN\eps_{n+i,j}
\]
with base elements $\eps_{p,q}$. Given
\[
a=\sum_{1\leq j<i\leq n}a_{i,j} \eps_{i,j}+\sum_{1\leq j\leq i\leq n} a_{n+i,j}\eps_{n+i,j}\in\Gamma,
\]
denote
\begin{equation}\label{eq PBW basis2}
\begin{aligned}
E^a=&C_{2,1}^{a_{2,1}}C_{3,1}^{a_{3,1}}C_{3,2}^{a_{3,2}}C_{4,1}^{a_{4,1}}\ldots C_{n,1}^{a_{n,1}}\ldots C_{n,n-1}^{a_{n,n-1}}\\
&\times C_{n+1,1}^{a_{n+1,1}}C_{n+2,1}^{a_{n+2,1}}C_{n+2,2}^{a_{n+2,2}}C_{n+3,1}^{a_{n+3,1}}\ldots C_{2n,1}^{a_{2n,1}}\ldots C_{2n,n}^{a_{2n,n}}.
\end{aligned}
\end{equation}
and
\[
x^a=\prod_{1\leq j<i\leq n}x_{i,j}^{a_{i,j}}\prod_{1\leq j\leq i\leq n}x_{n+i,j}^{a_{n+i,j}}.
\]
In particular, $\{x^a\ |\ a\in\Gamma\}$ form a basis of the polynomial algebra
\[
\caA=\bbC[x_{i,j},x_{n+i,j},x_{n+k,k}\ |\ 1\leq j<i\leq n;1\leq k\leq n].
\]
Denote by $\pt_{i,j}$ the partial derivative $\pt_{x_{i,j}}$ for simplicity. For convenience, we write
\[
x_{n+i,j}=x_{n+j,i},\quad\pt_{n+i,j}=\pt_{n+j,i}
\]
for $1\leq i<j\leq n$. Recall that a weight $\lambda\in\frh^*$ is a linear function on $\frh$. Denote
\[
\lambda_i:=\lambda(H_i)+1
\]
for $i=1,2,\ldots,n$. Although $\lambda_i$ is assigned to $\lambda(H_i)$ in \cite{Xu2}, we find that the formula of singular vectors could be effectively simplified if $\lambda_i$ is defined to be $\lambda(H_i)+1$. It can be shown (see \cite{Xu2}, 2.40, 2.41) in this setting that
\[
\begin{aligned}
d_i=&C_{i,i+1}|_{\caA}\\
=&\Big(\lambda_i-1-\sum_{j=i+1}^nx_{j,i}\pt_{j,i}+\sum_{j=i+2}^nx_{j,i+1}\pt_{j,i+1}+\sum_{k\neq i,i+1}(x_{n+k,i+1}\pt_{n+k,i+1}-x_{n+k,i}\pt_{n+k,i})\\
&-2x_{n+i,i}\pt_{n+i,i}+2x_{n+i+1,i+1}\pt_{n+i+1,i+1}\Big)\pt_{i+1,i}+\sum_{j=1}^{i-1}x_{i,j}\pt_{i+1,j}\\
&-\sum_{j=i+2}^nx_{j,i+1}\pt_{j,i}-\sum_{k\neq i+1}x_{n+k,i+1}\pt_{n+k,i}-2x_{n+i+1,i+1}\pt_{n+i+1,i}
\end{aligned}
\]
for $i=1,2,\ldots,n-1$ and
\[
d_n=C_{n,2n}|_\caA=(\lambda_n-1-x_{2n,n}\pt_{2n,n})\pt_{2n,n}+\sum_{i=1}^{n-1}\left(x_{n,i}+\sum_{j=1}^ix_{n+i,j}\pt_{2n,j}\right)\pt_{2n,i}
\]
\begin{prop}[See \cite{Xu2}, Proposition\ 2.1]
Let $u$ be a weight vector in $M_\lambda$, then $u$ is a singular vector if and only if
\begin{equation}\label{eqsp1}
d_i(\tau(u))=0
\end{equation}
for $i=1,2,\dots,n$.
\end{prop}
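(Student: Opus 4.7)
The plan is to specialize the general singular-vector criterion already established to the $\frsp(2n)$ setting. Recall the Proposition stated just before Definition \ref{pde for singular}: for any semisimple $\frg$, a weight vector $v \in M_\lambda$ is singular if and only if $d_\alpha(\tau(v)) = 0$ for every simple root $\alpha \in \Delta$. One direction is immediate from the definition $d_\alpha = E_\alpha|_\caA$; for the other, one uses that $\frn$ is generated as a Lie algebra by the simple root vectors, so a routine induction on $\htt(\beta)$ (writing $E_\beta$ as a scalar multiple of $[E_\alpha, E_{\beta-\alpha}]$ for a suitable $\alpha \in \Delta$) propagates annihilation from the simple root vectors to all positive root vectors.

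Hence it suffices to identify the operators $d_1,\ldots,d_n$ appearing in the statement with the operators $d_{\alpha_i}$ attached to the simple roots of $\frsp(2n)$. These simple roots are $\alpha_i = e_i - e_{i+1}$ for $1 \leq i \leq n-1$ and $\alpha_n = 2e_n$, with corresponding positive root vectors $C_{i,i+1}$ and $C_{n,2n} = E_{n,2n}$. Under the PBW ordering fixed in (\ref{eq PBW basis2}), the definitions $d_{\alpha_i} = E_{\alpha_i}|_\caA$ thus coincide by construction with $d_i = C_{i,i+1}|_\caA$ for $i<n$ and $d_n = C_{n,2n}|_\caA$.

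The remaining task is to justify that these restrictions admit the explicit differential expressions displayed for $d_i$ and $d_n$. This is a direct bookkeeping calculation combining Proposition \ref{prop1}(iii) with the Chevalley commutation relations read off from the matrix realization $\frsp(2n) \subset \frsl(2n,\bbC)$: one enumerates the positive roots $\gamma$ with $\gamma - \alpha_i \in \Phi^+$, reads off the structure constants $N_{\alpha_i,-\gamma}$ from Theorem \ref{Chavelley basis}, and tracks the lower-order remainder via Lemma \ref{basic lem3}. The only mild subtlety is the long root $\alpha_n = 2e_n$, where longer root strings can in principle produce quadratic corrections and where the shift $\lambda_i := \lambda(H_i)+1$ is tailored to absorb the contribution of $[C_{n,2n}, C_{2n,n}]$ acting on $x_{2n,n}$. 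The main obstacle is therefore the sheer volume of case analysis rather than any conceptual difficulty; the resulting expressions agree with formulas 2.40 and 2.41 of \cite{Xu2} under the convention change noted in the paragraph preceding the proposition.
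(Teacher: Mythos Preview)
Your proposal is correct. The paper itself gives no proof of this proposition: it is stated with a citation to \cite{Xu2}, Proposition~2.1, and the surrounding text simply records the explicit formulas for $d_i$ (also cited to \cite{Xu2}). Your argument---specializing the general criterion stated just before Definition~\ref{pde for singular} to $\frsp(2n)$ and identifying $d_i$ with $d_{\alpha_i}$---is exactly the natural route and is implicit in the paper's structure.

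One minor remark: the third paragraph of your proposal, on verifying the explicit differential expressions for $d_i$ and $d_n$, is not actually needed for the proposition as stated. The operators $d_i$ are \emph{defined} as $C_{i,i+1}|_\caA$ and $C_{n,2n}|_\caA$; the displayed formulas are a separate computation (imported from \cite{Xu2}) that the proposition does not depend on. So your first two paragraphs already suffice.
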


Moreover, we have (see \cite{Xu2}, 3.3, 3.4)
\begin{equation}\label{etai}
\eta_i:=\eta_{e_i-e_{i+1}}=C_{i+1,i}|_{\caA}=x_{i+1,i}+\sum_{j=1}^{i-1}x_{i+1,j}\pt_{i,j}
\end{equation}
for $i=1,2,\ldots,n-1$ and
\begin{equation}\label{etan}
\eta_n:=\eta_{2e_n}=C_{2n,n}|_\caA=x_{2n,n}+\sum_{j=1}^{n-1}(x_{2n,j}+\sum_{i=j}^{n-1}x_{n+i,j}\pt_{n,i})\pt_{n,j}.
\end{equation}

Let $\iota$ be the ordering of $\Phi^+$ corresponding to (\ref{eq PBW basis2}). It is not difficult to check that $\iota$ is a good ordering. Note that in this case,
\[
\caA_0=\bbC[x_{i,j}, x_{n+p, q}\ |\ 1\leq j< i-1\leq n-1;\ 1\leq q\leq p\leq n;\ q\neq n].
\]
Let $\caA_1$ be the space of truncated-up formal power series in $\{x_{2,1},\ldots, x_{n,n-1}, x_{2n,n}\}$ over $\caA_0$. We can also define the action of $W$ on $\caA_1$ as in the previous sections. Then the space $\caA_1$ is a differential-operator representation of the Weyl group $W$. The solution space of the system \ref{eqsp1} in $\caA_1$ is the span of $\{w(1)\ |\ w\in W\}$, which is the set of weighted solutions  up to scalars. The function $w(1)$ is a polynomial if and only if $w\cdot\lambda$ is strongly linked to $\lambda$.

\subsection{Formula of singular vectors} With the setting in this section, some beautiful formulas of singular vectors can be obtained. For $k\in\bbN$ and $\beta\in\Phi^+$, denote
\[
\Gamma_{\beta}^k:=\{a\in\Gamma\ |\ [H, E^a]=k\beta(H)E^a\ \mbox{for all}\ H\in\frh\}.
\]

\begin{example}
Note that
\[
\begin{aligned}
&s_{e_1+e_2}(1)\\
=&s_{2e_2}s_{e_1-e_2}s_{2e_2}(1)=\eta_2^{\lambda_1+\lambda_2}
(x_{2,1}^{\lambda_1+2\lambda_2}x_{4,2}^{\lambda_2})\\
=&(x_{4,2}+x_{4,1}\pt_{2,1}+x_{3,1}\pt_{2,1}^2)^{\lambda_1+\lambda_2}(x_{2,1}^{\lambda_1+2\lambda_2}x_{4,2}^{\lambda_2})\\
=&\sum_{p, q\in\bbN}\frac{\langle\lambda_1+\lambda_2\rangle_{p+q}}{p!q!}
x_{4,2}^{\lambda_1+\lambda_2-p-q}(x_{4,1}\pt_{2,1})^{q}
(x_{3,1}\pt_{2,1}^2)^{p}x_{2,1}^{\lambda_1+2\lambda_2}x_{4,2}^{\lambda_2}\\
=&\sum_{p, q\in\bbN}\frac{\langle\lambda_1+\lambda_2\rangle_{p+q}\langle\lambda_1+2\lambda_2\rangle_{2p+q}}
{p!q!}
x_{2,1}^{\lambda_1+2\lambda_2-2p-q}x_{3,1}^{p}x_{4,1}^{q}x_{4,2}^{\lambda_1+2\lambda_2-p-q}
\end{aligned}
\]
is a solution of (\ref{eqsp1}). Suppose that $\langle\lambda+\rho,e_1+e_2\rangle=\lambda_1+2\lambda_2=k\in\bbN$. Set
\[
a=(k-2p-q)\epsilon_{2,1}+p\epsilon_{3,1}+q\epsilon_{4,1}+(k-p-q)\epsilon_{4,2},
\]
$u_1=\lambda_1+\lambda_2$ and $r_1(a)=a_{3,1}+a_{4,1}=p+q$. Then
\[
\begin{aligned}
v=&\tau^{-1}(s_{e_1+e_2}(1))\\
=&\tau^{-1}\left(\sum_{p, q\in\bbN}\frac{\langle\lambda_1+\lambda_2\rangle_{p+q}k!(k-p-q)!}
{p!q!(k-2p-q)!(k-p-q)!}
x_{2,1}^{k-2p-q}x_{3,1}^{p}x_{4,1}^{q}x_{4,2}^{k-p-q}\right)\\
=&k!\sum_{a\in\Gamma_{e_1+e_2}^k}\frac{\langle u_1\rangle_{r_1(a)}(k-r_1(a))!}
{a_{2,1}!a_{3,1}!a_{4,1}!a_{4,2}!}E^av_\lambda
\end{aligned}
\]
is a singular vector of $M_\lambda$ with weight $s_{e_1+e_2}\cdot\lambda$.
\end{example}

The above result can be generalized to any root $e_1+e_n$ for $n\geq 2$.

\begin{theorem}\label{thm4}
Suppose that $\langle\lambda+\rho,e_1+e_n\rangle=k\in\bbN$ for $\lambda\in\frh^*$. Set $u_i=\sum_{j=1}^i\lambda_j$ for $i=1, 2, \ldots, n-1$. Given $a\in\Gamma$, define
\[
r_i(a)=\sum_{j<i+1<q\leq n}a_{q,j}+\sum_{j\leq i<q\leq n}a_{n+q,j}+2\sum_{j\leq q\leq i}a_{n+q,j}
\]
for $i=1, 2, \ldots, n-2$ and $r_{n-1}(a)=\sum_{j\leq q\leq n, j\neq n}a_{n+q,j}$. Then
\begin{equation}\label{thm41}
v=k!\sum_{a\in\Gamma_{e_1+e_n}^k}\frac{\prod_{i=1}^{n-1}\langle u_i\rangle_{r_i(a)}( k-r_i(a))!}{\prod_{1\leq j<i\leq n}{a_{i,j}!}\prod_{1\leq j\leq i\leq n}{a_{n+i,j}!}}E^av_\lambda
\end{equation}
is a singular vector in $M_\lambda$ of weight $s_{e_{1}+e_{n}}\cdot\lambda$.
\end{theorem}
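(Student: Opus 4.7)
The plan is to leverage Theorem \ref{thm2} together with Lemma \ref{equni1}: up to a nonzero scalar, the unique weighted solution in $\caA_1$ of the system (\ref{eqsp1}) of weight $s_{e_1+e_n}\cdot\lambda$ is $s_{e_1+e_n}(1)$, and the hypothesis $k=\langle\lambda+\rho,e_1+e_n\rangle\in\bbN$ forces $s_{e_1+e_n}\cdot\lambda\uparrow\lambda$, so Theorem \ref{thm3} guarantees that this solution is in fact a polynomial whose $\tau$-preimage is a singular vector of $M_\lambda$. Therefore the task reduces to identifying $\tau(v)$, with $v$ defined by (\ref{thm41}), as a nonzero scalar multiple of $s_{e_1+e_n}(1)$. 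The weight is immediate from the definition of $\Gamma_{e_1+e_n}^k$: every $E^av_\lambda$ in the sum has weight $\lambda-k(e_1+e_n)=s_{e_1+e_n}\cdot\lambda$, and a single explicit term (for instance $a=k\eps_{2n,1}$, which clearly lies in $\Gamma_{e_1+e_n}^k$) shows that $v$ is not identically zero.

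I would then compute $s_{e_1+e_n}(1)$ using the reduced expression
\[
s_{e_1+e_n}=s_n s_{n-1}\cdots s_2 s_1 s_2\cdots s_{n-1} s_n,
\]
which one checks directly by evaluating on $e_1,\dots,e_n$, and applying (\ref{eqact0}) step by step with the explicit formulas (\ref{etai}) and (\ref{etan}). Starting from $s_n(1)=\eta_n^{\lambda_n}(1)=x_{2n,n}^{\lambda_n}$, the exponent at the $j$-th step is $\langle\lambda+\rho,w^{-1}\alpha^\vee\rangle$ for the appropriate partial product $w$ and simple root $\alpha$; a direct induction identifies these exponents, producing $\lambda_n,\lambda_{n-1}+2\lambda_n,\ldots,\lambda_1+\cdots+\lambda_{n-1}+2\lambda_n=k$ in the first half of the word and $u_1,u_2,\ldots,u_{n-2}$ followed by $u_{n-1}+\lambda_n$ in the second half. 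Each $\eta_i^c$ admits a clean binomial expansion because the two summands of (\ref{etai}) (or the several summands of (\ref{etan}) for $i=n$) pairwise commute; so each application of $\eta_i^c$ contributes a multi-index sum of occupation numbers. Iterating these expansions and converting back via $\tau^{-1}$ should reproduce the right-hand side of (\ref{thm41}), with each factor $\langle u_i\rangle_{r_i(a)}$ arising from one of the outer Pochhammer expansions and $r_i(a)$ counting the first- and second-order differentiations accumulated at position $i$.

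The main obstacle is the combinatorial bookkeeping: one must prove by induction on the position in the reduced word that the accumulated coefficient of each $E^av_\lambda$ equals precisely the product of Pochhammer symbols, factorials and inverse factorials in (\ref{thm41}). The $n=2$ case already appears in Example 5.1 and will serve as the base of the induction. A cleaner alternative is to bypass the iterated expansion entirely and instead verify directly that $\tau(v)$ satisfies $d_i(\tau(v))=0$ for every $i=1,\dots,n$, using the explicit operators $d_i$ together with Pochhammer identities such as $\langle u_i\rangle_{r_i(a)+1}=(u_i-r_i(a))\langle u_i\rangle_{r_i(a)}$ and the index shifts $a\mapsto a\pm\eps_{p,q}$ to pair off the various contributions; Lemma \ref{equni1} then identifies $\tau(v)$ with $s_{e_1+e_n}(1)$ up to the nonzero leading-term scalar. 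In either approach the hardest single equation is $d_n(\tau(v))=0$, because $d_n$ carries the second-order derivative $\pt_{2n,n}^2$ together with cross terms $x_{n,i}\pt_{2n,i}$ and $x_{n+i,j}\pt_{2n,j}\pt_{2n,i}$ that interact nontrivially with the long-root exponent structure encoded in $r_{n-1}(a)$.
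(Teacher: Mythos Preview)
Your overall strategy---identify $\tau(v)$ with $s_{e_1+e_n}(1)$ and invoke Theorems \ref{thm2}, \ref{thm3} and Lemma \ref{equni1}---matches the paper's, and your reduced word and exponent bookkeeping are correct. Where you diverge is in how $s_{e_1+e_n}(1)$ is actually computed. The paper does \emph{not} expand the full $(2n-1)$-step word $s_n s_{n-1}\cdots s_1\cdots s_{n-1}s_n$; instead it uses the three-step factorisation $s_{e_1+e_n}=s_{2e_n}s_{e_1-e_n}s_{2e_n}$ and handles the middle factor in one stroke via Lemma \ref{lem4}, which imports the already-known $\frsl(n)$ formula from \cite{Xi2} for $s_{e_1-e_n}$ acting on $x_{2n,n}^{\lambda_n}$. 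This collapses the entire ``type $A$'' portion of the computation and leaves only a single application of $\eta_n^{k-\lambda_n}$ via the multinomial expansion (\ref{thm44}); the passage from (\ref{thm42})--(\ref{thm44}) to the coefficient $c_a$ in (\ref{thm41}) is then a short change of variables (\ref{main3eq8}).

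Your two proposed routes are both viable in principle, but each is substantially heavier. Expanding all $2n-1$ factors would force you to redo the $\frsl(n)$ combinatorics of \cite{Xi2} inside this proof, while the direct verification of $d_i(\tau(v))=0$ for all $i$ (especially $i=n$, as you note) would require an intricate multi-index induction with no obvious conceptual shortcut. The paper's trick---recognising that $s_{e_1-e_n}$ acts on $x_{2n,n}^{\lambda_n}$ exactly as on the highest weight vector of an $\frsl(n)$ Verma module, because $\eta_i(x_{2n,n}^{\lambda_n})=x_{i+1,i}x_{2n,n}^{\lambda_n}$ for $i<n$---is the missing idea that makes the computation tractable.
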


The following lemma is useful in the proof of the above theorem.

\begin{lemma}\label{lem4}
Let $f$ be a weighted solution of the system (\ref{eqsp1}) with weight $\mu=\lambda-2\lambda_n e_n$. We use the same notation in the above theorem. If $\eta_i(f)=x_{i+1,i}f$ for $i=1,\ldots, n-1$, then
\[
s_{e_{1}-e_{n}}(f)=f\sum_{a\in\Gamma_{e_1-e_n}^k}\frac{k!\prod_{i=1}^{n-2}\langle u_i\rangle_{r_i(a)}(k-r_i(a))!}
{\prod_{1\leq j<i\leq n}{a_{i,j}!}}x^{a}.
\]
\end{lemma}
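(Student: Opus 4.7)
The plan is to reduce the claim to the identity $s_{e_1-e_n}(f)=\Theta(1)\cdot f$ for an explicit iterated differential operator $\Theta$, and then to recognize $\Theta(1)$ as the analogous $\frsl(n,\bbC)$-singular-vector formula. Set $\xi_i:=\eta_i-x_{i+1,i}=\sum_{j=1}^{i-1}x_{i+1,j}\pt_{i,j}$, so that the hypothesis becomes $\xi_i(f)=0$ for all $i<n$. Each summand $x_{i+1,j}\pt_{i,j}$ is a derivation on $\caA_1$ (a composition of multiplication and partial differentiation in disjoint variables), so $\xi_i$ obeys the Leibniz rule, which gives $\xi_i^p(gf)=(\xi_i^p g)\,f$ for every polynomial $g$ and every $p\geq 0$. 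Since $[x_{i+1,i},\xi_i]=0$, the expansion analogous to \eqref{eqet2},
\[
\eta_i^c=\sum_{p\geq 0}\tfrac{\langle c\rangle_p}{p!}\,x_{i+1,i}^{c-p}\xi_i^p,
\]
then yields the crucial factoring identity $\eta_i^c(gf)=(\eta_i^c g)\,f$ for every $c\in\bbC$.

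Next I would invoke the reduced expression $s_{e_1-e_n}=s_1 s_2\cdots s_{n-1}s_{n-2}\cdots s_1$ of length $2n-3$ and determine the exponents of the successive $\eta_i$'s by tracking the $W$-action on $\mu+\rho=(c_1,\ldots,c_{n-1},-c_n)$ with $c_i=\sum_{j=i}^n\lambda_j$. A direct check shows that the ``up'' sequence $s_1,\ldots,s_{n-1}$ uses exponents $u_1,u_2,\ldots,u_{n-2},k$ on $\eta_1,\ldots,\eta_{n-1}$ respectively, where $k=u_{n-1}+2\lambda_n=\langle\mu+\rho,(e_1-e_n)^\vee\rangle$, and the ``down'' sequence $s_{n-2},\ldots,s_1$ uses $k-u_{n-2},\ldots,k-u_1$. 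Iterating the factoring identity step by step then produces
\[
s_{e_1-e_n}(f)=\Theta(1)\cdot f,\qquad \Theta:=\eta_1^{k-u_1}\cdots\eta_{n-2}^{k-u_{n-2}}\eta_{n-1}^k\eta_{n-2}^{u_{n-2}}\cdots\eta_1^{u_1}.
\]

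The remaining task is to evaluate $\Theta(1)$. Since $\Theta$ involves only $\eta_i$ with $i<n$, which are exactly the Chevalley differential operators for the $\frsl(n,\bbC)$-subalgebra generated by $\{H_i,E_{\pm(e_i-e_{i+1})}:i<n\}$, and since the exponent pattern $u_1,\ldots,u_{n-2},k,k-u_{n-2},\ldots,k-u_1$ is precisely the one produced by the $\frsl(n)$-Weyl element $s_{e_1-e_n}$ acting on the highest weight vector of the $\frsl(n)$-Verma module of weight $\mu$ (noting that $\mu_i=\lambda_i$ for $i<n-1$ and $\mu_{n-1}=\lambda_{n-1}+2\lambda_n$, so $u_i=\mu_1+\cdots+\mu_i$ for $i\leq n-2$ and $k=\mu_1+\cdots+\mu_{n-1}$), the identification of $\Theta(1)$ with the stated sum reduces to the $\frsl(n,\bbC)$ case of \cite{Xi1,Xu1}. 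The main obstacle is precisely this last step: alternatively, one can reprove the $\frsl(n)$-formula by induction on $n$ via the group-level decomposition $s_{e_1-e_n}=s_{n-1}\,s_{e_1-e_{n-1}}\,s_{n-1}$ combined with the factoring identity from the first paragraph, the combinatorial core being to show that iterated multinomial expansions of $\eta_i^m=\sum_b\tfrac{m!}{\prod_j b_j!}\prod_{j\leq i}x_{i+1,j}^{b_j}\prod_{j<i}\pt_{i,j}^{b_j}$ telescope to give exactly the Pochhammer factors $\langle u_i\rangle_{r_i(a)}(k-r_i(a))!$ and the coefficient $k!/\prod a_{i,j}!$ appearing in the claimed sum.
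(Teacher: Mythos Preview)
Your approach is correct and genuinely different from the paper's. The paper argues as follows: since $\langle\mu+\rho,(e_1-e_n)^\vee\rangle=k$, BGG--Verma supplies $u\in U(\bar\frn)$ with $uv_\mu$ singular in $M_\mu$; the isomorphism $M_\mu\simeq U(\frg)|_{\caA_1}(f)$ of Lemma~\ref{eqcen2} together with the uniqueness Lemma~\ref{equni1} forces $u|_{\caA_1}(f)=s_{e_1-e_n}(f)$; then Theorem~4.4 of \cite{Xi2} provides the explicit $u$; finally the paper proves $E^a|_{\caA_1}(f)=x^af$ for every $a\in\Gamma_{e_1-e_n}^k$ by first deducing $\eta_{e_p-e_q}(f)=\eta_{e_p-e_q}(1)\,f$ for all $p<q$ via the commutator $\eta_{e_i-e_l}=-[\eta_{e_i-e_j},\eta_{e_j-e_l}]$ and induction on $q-p$.

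Your route bypasses Lemmas~\ref{eqcen2} and~\ref{equni1} and BGG--Verma entirely: the derivation property of $\xi_i$ plus $\xi_i(f)=0$ gives the factoring $\eta_i^c(gf)=(\eta_i^c g)f$ directly (and this works for any $g\in\caA_1$, not just polynomials, which you need since the intermediate $g$'s carry complex exponents), so a single reduced word for $s_{e_1-e_n}$ yields $s_{e_1-e_n}(f)=\Theta(1)\cdot f$ in one stroke. Both approaches ultimately appeal to the explicit $\frsl(n)$ formula (your citation should be \cite{Xi2} rather than \cite{Xi1}); the trade-off is that your argument is more elementary and self-contained at the factoring step, while the paper's argument illustrates how the abstract module isomorphism $M_\mu\simeq U(\frg)|_{\caA_1}(f)$ does real work, and its bracket induction yields the slightly stronger intermediate statement that \emph{every} $\eta_{e_p-e_q}$ (not just the simple ones) acts on $f$ by multiplication.
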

\begin{proof}
Since $\langle\mu+\rho,e_1-e_n\rangle=k\in\bbN$, by Theorem \ref{BGG-V thm}, there exists $u\in U(\bar\frn)$ such that $uv_\mu \in M_{\mu}$ is a singular vector in $M_{\mu}$ with weight $s_{e_1-e_n}\cdot\mu$. In view of Lemma \ref{eqcen2}, we have $U(\frg)|_{\caA_1}(f)\simeq M_\mu$ and thus $u|_{\caA_1}(f)$ is also a weighted solution of the system (\ref{eqsp1}). By Lemma \ref{equni1}, we can assume that
\begin{equation}\label{eqlem41}
u|_{\caA_1}(f)=s_{e_{1}-e_{n}}(f).
\end{equation}
It follows from Theorem 4.4 in \cite{Xi2} that
\[
u=\sum_{a\in\Gamma_{e_1-e_n}^k}\frac{\prod_{i=1}^{n-1}\langle u'_i\rangle_{r'_i(a)}( k-r'_i(a))!}{\prod_{1\leq j\leq i\leq n}{a_{i,j}!}}E^a,
\]
where $u'_i=\sum_{j=1}^i(\mu(H_j)+1)$ and $r'_i(a)=\sum_{1\leq j<i+1<q\leq n}a_{q, j}$ for $i=1, \ldots, n-1$. It is evident that $u'_i=u_i$ for $i\neq n-1$ and $u'_{n-1}=k$. Since $a_{n+q,j}=0$ for $a\in\Gamma_{e_1-e_n}^k$ and $q\geq 1$, we have $r'_i(a)=r_i(a)$ for $i\neq n-1$. Moreover $r'_{n-1}(a)=0$. Thus
\begin{equation}\label{eqlem42}
\tau(uv_\lambda)=\sum_{a\in\Gamma_{e_1-e_n}^k}\frac{k!\prod_{i=1}^{n-2}\langle u_i\rangle_{r_i(a)}(k-r_i(a))!}
{\prod_{1\leq j<i\leq n}{a_{i,j}!}}x^{a}.
\end{equation}
On the other hand, if $\eta_{e_i-e_j}(f)=\eta_{e_i-e_j}(1)f$ and $\eta_{e_j-e_l}(f)=\eta_{e_j-e_l}(1)f$ for $1\leq i<j<l\leq n$, then
\[
\eta_{e_i-e_l}(f)=-[\eta_{e_i-e_j}, \eta_{e_j-e_l}](f)=\eta_{e_i-e_l}(1)f.
\]
Since
\[
\eta_{e_i-e_{i+1}}(f)=\eta_i(f)=x_{i+1,i}f=\eta_{e_i-e_{i+1}}(1)f
\]
for $i=1,\ldots,n-1$, we can get $\eta_{e_p-e_q}(f)=\eta_{e_p-e_q}(1)f$ for all pairs $(p, q)$ with $1\leq p<q\leq n$ by induction on $q-p$. Therefore $E^a|_{\caA_1}(f)=E^a|_{\caA_1}(1)f=x^af$ for $a\in\Gamma_{e_1-e_n}^k$ and thus
\begin{equation}\label{eqlem43}
u|_{\caA_1}(f)=u|_{\caA_1}(1)f=\tau(uv_\lambda)f.
\end{equation}
With (\ref{eqlem41}),  (\ref{eqlem42}) and (\ref{eqlem43}), we can obtain the desired formula for $s_{e_{1}-e_{n}}(f)$ immediately.
\end{proof}

{\bf Proof of Theorem \ref{thm4}} Since $\langle\lambda+\rho,e_1+e_n\rangle=k\in\bbN$, then $s_{e_{1}+e_{n}}(1)$ is a polynomial solution of the system (\ref{eqsp1}) by Theorem \ref{thm2} and Theorem \ref{thm3}, with weight $\lambda-k(e_1+e_2)$. We have
\begin{equation}\label{thm42}
\begin{aligned}
s_{e_{1}+e_{n}}(1)=s_{2e_n}s_{e_1-e_n}s_{2e_n}(1)=\eta_n^{k-\lambda_n}s_{e_{1}-e_{n}}(x_{2n,n}^{\lambda_n}).
\end{aligned}
\end{equation}
The weight of the solution $x_{2n,n}^{\lambda_n}$ is $\mu=\lambda-2\lambda_n e_n$ and $\eta_i(x_{2n,n}^{\lambda_n})=x_{i+1,i}x_{2n,n}^{\lambda_n}$ for $i\neq n$. By Lemma \ref{lem4}, we obtain
\begin{equation}\label{thm43}
s_{e_{1}-e_{n}}(x_{2n,n}^{\lambda_n})=
\sum_{a'\in\Gamma_{e_1-e_n}^k}\frac{k!\prod_{i=1}^{n-2}\langle u_i\rangle_{r_i(a')}(k-r_i(a'))!}
{\prod_{1\leq j<i\leq n}{a'_{i,j}!}}x^{a'+\lambda_n\epsilon_{2n,n}}.
\end{equation}
For convenience, we set $\pt_{n,n}=1$ and
\[
\Gamma'':=\{a''\in\Gamma\ |\ a''_{i,j}=0\ \mbox{if}\ j=n,\ \mbox{or}\ i\leq n\}.
\]
By (\ref{etan}) one has $\eta_n=x_{2n,n}+\sum_{j=1}^{n-1}\sum_{i=1}^nx_{n+i,j}\pt_{n,i}\pt_{n,j}$ and
\begin{equation}\label{thm44}
\eta_n^{k-\lambda_n}=\sum_{a''\in\Gamma''}\langle k-\lambda_n\rangle_{|a''|}x_{2n,n}^{k-\lambda_n-|a''|}\prod_{j=1}^{n-1}\prod_{i=j}^n
\frac{(x_{n+i,j}\pt_{n,i}\pt_{n,j})^{a''_{n+i,j}}}{a''_{n+i,j}!}.
\end{equation}
With (\ref{thm42}), (\ref{thm43}) and (\ref{thm44}) in hand, we can denote
\begin{equation}\label{thm45}
a=a'+a''-\sum_{j=1}^{n-1}\sum_{i=j}^na''_{n+i,j}(\epsilon_{n,i}+\epsilon_{n,j})
+(k-|a''|)\epsilon_{2n,n}
\end{equation}
and
\[
c_a=\langle k-\lambda_n\rangle_{|a''|}
\frac{k!\prod_{i=1}^{n-2}\langle u_i\rangle_{r_i(a')}(k-r_i(a'))!\prod_{j=1}^{n-1}\langle a'_{n,j}\rangle_{\sum_{i=1}^ja''_{n+j,i}+\sum_{i=j}^na''_{n+i,j}}
}{\prod_{1\leq j<i\leq n}{a'_{i,j}!}\prod_{j=1}^{n-1}\prod_{i=j}^n a''_{n+i,j}!}.
\]
Here $\epsilon_{n,n}=0$. Since $s_{e_1+e_n}(1)$ is a polynomial with weight $\lambda-k(e_1+e_n)$, we obtain $c_a=0$ unless $a\in\Gamma_{e_1+e_n}^k$. Moreover
\[
s_{e_1+e_n}(1)=\sum_{a\in\Gamma_{e_1+e_n}^k}c_ax^a.
\]
By (\ref{thm45}), one has
\begin{equation}\label{main3eq8}
\left\{\begin{aligned}
&a_{i,j}=a'_{i,j}\qquad\qquad\qquad\qquad\qquad\qquad\ \mbox{if}\ 1\leq j<i<n;\\
&a_{n,j}=a'_{n,j}-\sum_{i=1}^ja''_{n+j,i}-\sum_{i=j}^na''_{n+i,j}\quad\ \mbox{if}\ 1\leq j< n;\\
&a_{n+i,j}=a''_{n+i,j}\qquad\qquad\qquad\qquad\qquad\mbox{if}\ 1\leq j\leq i\leq n, j\neq n;\\
&a_{2n,n}=k-|a''|.
\end{aligned}
\right.
\end{equation}
It follows that
\[
r_i(a')=\sum_{j<i+1<q\leq n}a'_{q,j}=\sum_{j<i+1<q\leq n}a_{q,j}+\sum_{j\leq i<q\leq n}a_{n+q,j}+2\sum_{j\leq q\leq i}a_{n+q,j}=r_i(a)
\]
for $i=1, \dots, n-2$ and
\[
u_{n-1}=\sum_{j=1}^n\lambda_j=k-\lambda_n\ \mbox{and}\ r_{n-1}(a)=\sum_{j\leq q\leq n, j\neq n}a_{n+q,j}=|a''|.
\]
Therefore $a_{2n,n}=k-r_{n-1}(a)$ and
\[
\begin{aligned}
c_a&=\langle u_{n-1}\rangle_{r_{n-1}(a)}
\frac{k!\prod_{i=1}^{n-2}\langle u_i\rangle_{r_i(a)}(k-r_i(a))!
}{\prod_{1\leq j<i<n}{a_{i,j}!}\prod_{j=1}^{n-1}\prod_{i=j}^n a_{n+i,j}!}\prod_{j=1}^{n-1}\frac{\langle a'_{n,j}\rangle_{a'_{n,j}-a_{n,j}}}{a'_{n,j}!}\\
&=\langle u_{n-1}\rangle_{r_{n-1}(a)}\frac{k!\prod_{i=1}^{n-2}\langle u_i\rangle_{r_i(a)}( k-r_i(a))!}{\prod_{1\leq j<i\leq n}{a_{i,j}!}\prod_{j=1}^{n-1}\prod_{i=j}^n{a_{n+i,j}!}}\frac{(k-r_{n-1}(a))!}{a_{2n,n}!}\\
&=k!\frac{\prod_{i=1}^{n-1}\langle u_i\rangle_{r_i(a)}( k-r_i(a))!}{\prod_{1\leq j<i\leq n}{a_{i,j}!}\prod_{1\leq j\leq i\leq n}{a_{n+i,j}!}}.
\end{aligned}
\]
Hence $$v=\tau^{-1}(s_{e_1+e_n}(1))=\sum_{a\in\Gamma_{e_1+e_n}^k}c_aE^av_\lambda$$ is a singular vector in $M_\lambda$ with weight $s_{e_1+e_n}\cdot\lambda$.

%
%
\section{The proof of Lemma \ref{eqgra1}}
%
%

In this section, we will prove Lemma \ref{eqgra1}. To avoid excessive notation, here we abandon our standard conventions about the symbols $\alpha_i, \beta_j$...

\subsection{Hanging edge and central graph}
Let $\Phi_i$ be the set of positive roots in $\Phi^+$ of height $i$, and let $k_i$ be the number of roots in $\Phi_i$. Denote by $h$ the largest height of all the positive roots. Then $n=k_1\geq\ldots\geq k_h=1$ by Theorem 3.20 in \cite{H2}. Recall from section 3 that $A(\Phi)=(a_{\beta,\gamma})_{m\times(m-n)}$, where
\[
a_{\beta,\gamma}=\left\{\begin{aligned}
&N_{\beta,-\gamma}\qquad\quad\mbox{if}\ \gamma-\beta\in\Phi^+\\
&0\qquad\qquad\quad\mbox{otherwise}
\end{aligned}
\right.
\]
for $\beta\in\Phi^+,\gamma\in\Phi^+\backslash\Delta$. Consider the following submatrices
\[
A_i(\Phi):=(a_{\beta,\gamma})_{k_i\times k_{i+1}},\ \mbox{for}\ \beta\in\Phi_i, \gamma\in\Phi_{i+1}
\]
and $i=1,2,\cdots,h-1$. Since $a_{\beta,\gamma}=0$ if $\htt\gamma\leq\htt\beta$, we have the following lemma.

\begin{lemma}\label{eqgra2}
If $A_i(\Phi)$ has full rank $k_{i+1}$ for $i=1,2,\ldots,h-1$, then $A(\Phi)$ has full rank $m-n$.
\end{lemma}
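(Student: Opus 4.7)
The plan is to exploit a block upper-triangular structure on $A(\Phi)$ coming from the height grading: when rows and columns are grouped by $\htt$, the only super-diagonal blocks are exactly the matrices $A_i(\Phi)$, so the hypothesis propagates by a straightforward downward induction on height.

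First I would record the key sign observation: a nonzero entry $a_{\beta,\gamma}$ of $A(\Phi)$ demands $\gamma-\beta\in\Phi^+$, which forces $\htt\gamma>\htt\beta$. Grouping the rows of $A(\Phi)$ along $\Phi_1,\Phi_2,\ldots,\Phi_h$ and the columns along $\Phi_2,\Phi_3,\ldots,\Phi_h$, the block $B^{(i,j)}$ indexed by $(\Phi_i,\Phi_j)$ therefore vanishes whenever $j\leq i$, while in the ``first non-trivial'' position $B^{(i,i+1)}$ is precisely $A_i(\Phi)$. In particular the entire row block $\Phi_h$ is zero and may be discarded.

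Now suppose $A(\Phi)c=\mathbf{0}$ for some column vector $c$, which I decompose as $c=(c^{(2)},c^{(3)},\ldots,c^{(h)})$ with $c^{(j)}\in\bbC^{k_j}$ indexed by $\Phi_j$. Reading the equations from rows in $\Phi_{h-1}$ yields $A_{h-1}(\Phi)\,c^{(h)}=\mathbf{0}$, because $\Phi_h$ is the only column block reachable by an upward height step from $\Phi_{h-1}$. The hypothesis that $A_{h-1}(\Phi)$ has full column rank $k_h$ then forces $c^{(h)}=\mathbf{0}$. Proceeding by downward induction on $i$: assuming $c^{(j)}=\mathbf{0}$ for every $j>i+1$, the rows in $\Phi_i$ contribute $\sum_{j>i}B^{(i,j)}c^{(j)}=A_i(\Phi)\,c^{(i+1)}=\mathbf{0}$, and full column rank of $A_i(\Phi)$ supplies $c^{(i+1)}=\mathbf{0}$. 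After exhausting $i$ down to $1$, every block of $c$ vanishes, so the columns of $A(\Phi)$ are linearly independent and its rank equals $\sum_{j=2}^{h}k_j=m-n$.

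The argument involves no serious obstacle and is essentially clean bookkeeping about a block-triangular structure; the real work is pushed entirely into verifying the hypothesis that each $A_i(\Phi)$ has full column rank $k_{i+1}$, which is precisely the combinatorial task handled later via the hanging-edge and central-graph analysis of the Hasse diagram of $\Phi^+$.
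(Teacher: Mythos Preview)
Your proof is correct and follows exactly the approach the paper intends: the paper's entire argument is the single sentence ``Since $a_{\beta,\gamma}=0$ if $\htt\gamma\leq\htt\beta$, we have the following lemma,'' and you have simply spelled out the block upper-triangular consequence of that observation via downward induction on height. There is nothing to add or correct.
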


Let $U$ and $V$ be subsets of $\Phi_i$ and $\Phi_{i+1}$ respectively. We introduce an undirected graph $G(U,V)$ with vertices $U\cup V$. For $\beta\in U$ and $\gamma\in V$, $(\beta,\gamma)$ is an edge of $G(U,V)$ if $\gamma-\beta\in\Phi^+$ (which means $a_{\beta, \gamma}=N_{\beta,-\gamma}\neq0$). Then $G(U,V)$ is a bipartite graph (or bigraph). It is an induced subgraph of the Hasse diagram associated with the root poset of $\Phi$. A visualization of the root poset corresponding to each Dynkin type can be found in the appendix of \cite{R}, using 3-dimensional cubes. For each graph $G(U,V)$, we can define an associated matrix
\[
A(U,V):=(a_{\beta,\gamma})_{|U|\times |V|},\ \mbox{for}\ \beta\in U, \gamma\in V,
\]
where $|U|$ and $|V|$ are numbers of roots in $U$ and $V$ respectively. It is a submatrix of $A_i(\Phi)$. Given an edge $(\beta, \gamma)$ of $G(U,V)$, we say $(\beta, \gamma)$ is a {\it hanging edge} if $\beta$ is the unique vertex in $U$ adjacent to $\gamma$ or $\gamma$ is the unique vertex in $V$ adjacent to $\beta$.

\begin{lemma}\label{eqgra3}
Let $(\beta_0, \gamma_0)$ be a hanging edge in $G(U,V)$. Then $A(U,V)$ has full rank if and only if $A(U\backslash\{\beta_0\}, V\backslash\{\gamma_0\})$ has full rank.
\end{lemma}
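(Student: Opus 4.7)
The plan is to analyze the two possibilities in the definition of a hanging edge separately and reduce each to a standard linear algebra fact: if a matrix has a row (or column) with exactly one nonzero entry, then its rank equals $1$ plus the rank of the submatrix obtained by deleting that row and that column.

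First I would unpack the hypothesis. Saying that $(\beta_0,\gamma_0)$ is a hanging edge means one of the following holds: either $\gamma_0$ is the only element of $V$ with $\gamma_0-\beta_0\in\Phi^+$, or $\beta_0$ is the only element of $U$ with $\gamma_0-\beta_0\in\Phi^+$. In the first case, the row of $A(U,V)$ indexed by $\beta_0$ has zeros in every column $\gamma\neq\gamma_0$, and its entry at column $\gamma_0$ is $a_{\beta_0,\gamma_0}=N_{\beta_0,-\gamma_0}\neq0$ by Theorem \ref{Chavelley basis}. In the second case, the analogous statement holds for the column indexed by $\gamma_0$.

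Next I would perform the reduction. Focus on the first case (the second is symmetric). Because the row $\beta_0$ has a single nonzero entry at column $\gamma_0$, I can use that row to clear every other entry in column $\gamma_0$ by elementary row operations, which do not alter the rank. After clearing, both the row $\beta_0$ and the column $\gamma_0$ contain a single nonzero entry, located at position $(\beta_0,\gamma_0)$. The remaining $(|U|-1)\times(|V|-1)$ block is exactly $A(U\setminus\{\beta_0\},\,V\setminus\{\gamma_0\})$, since the row and column operations did not touch any row in $U\setminus\{\beta_0\}$ restricted to columns in $V\setminus\{\gamma_0\}$. Therefore
\begin{equation*}
\mathrm{rank}\,A(U,V)\;=\;1+\mathrm{rank}\,A(U\setminus\{\beta_0\},\,V\setminus\{\gamma_0\}).
\end{equation*}
The same identity holds in the second case by transposing the roles of rows and columns.

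Finally I would translate this rank identity into the statement about full rank. ``Full rank'' of $A(U,V)$ means $\mathrm{rank}\,A(U,V)=\min(|U|,|V|)$, while full rank of the reduced matrix means $\mathrm{rank}\,A(U\setminus\{\beta_0\},V\setminus\{\gamma_0\})=\min(|U|-1,|V|-1)=\min(|U|,|V|)-1$. Substituting into the rank identity above immediately yields the desired equivalence. There is no real obstacle here: once the hanging edge condition is translated into the existence of a row or column with a single nonzero entry, the lemma reduces to an elementary Gaussian elimination argument, and the nondegeneracy of the pivot is guaranteed by the Chevalley property $N_{\beta,-\gamma}\neq0$ whenever $\gamma-\beta\in\Phi^+$.
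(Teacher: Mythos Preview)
Your argument is correct and follows essentially the same approach as the paper's proof: both reduce to the elementary fact that a matrix with a row or column containing exactly one nonzero entry has rank equal to $1$ plus the rank of the complementary submatrix. You spell out the row-reduction step and the rank identity more carefully than the paper, which simply asserts the conclusion in one line, but the underlying idea is identical.
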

\begin{proof}
Without loss of generality, we assume that $\beta_0$ is the only vertex in $U$ adjacent to $\gamma_0$. Then $a_{\beta,\gamma_0}=0$ if and only if $\beta=\beta_0$. If $A(U,V)$ has full rank, then the submatrix of $A(U,V)$ by deleting the row corresponding to $\beta_0$ and the column corresponding to $\gamma_0$ (which is exactly $A(U\backslash\{\beta_0\}, V\backslash\{\gamma_0\})$) also has full rank, and vise versa.
\end{proof}

Denote $U_0:=U$ and $V_0:=V$. In general, if $(\beta_i,\gamma_i)$ is a hanging edge of $G(U_i,V_i)$, denote $U_{i+1}:=U_i\backslash\{\beta_i\}$ and $V_{i+1}:=V_i\backslash\{\gamma_i\}$ for $i=0,1$ and so on. Since $U$ and $V$ are finite sets, there always exists $l\in\bbN$ such that $G(U_l,V_l)$ has no hanging edge. Then we say that $G(U_l,V_l)$ is a {\it central graph} of $G(U,V)$.

\begin{example}\label{examposet1}
Assume that $\Phi=D_5$. Let $e_1,e_2,\ldots, e_5$ be the usual orthonormal unit vectors which form a basis of $\bbR^5$. Then the positive roots in $\Phi$ can be expressed as $\{e_i\pm e_j\ |\ 1\leq i<j\leq 5\}$, with positive simple roots $e_1-e_2$, $e_2-e_3$, $e_3-e_4$, $e_4-e_5$ and $e_4+e_5$ (represented by $1, 2, 3, 4, 5$ respectively in the figure below). The root poset of $D_5$ can be constructed using 3-dimensional cubes (see \cite{R}). The induced subgraph $G(\Phi_2,\Phi_3)$ has a hanging edge $(e_1-e_3, e_1-e_4)$. Its central graph is $G(\Phi_2\backslash\{e_1-e_3\},\Phi_3\backslash\{e_1-e_4\})$.

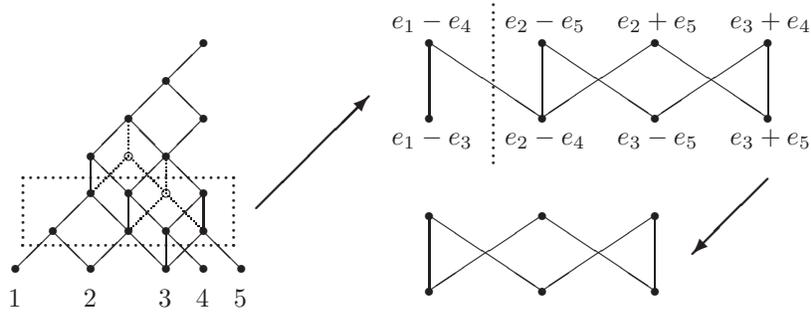
\begin{figure}[H]\label{f1}
\setlength{\unitlength}{1mm}
\begin{center}
\begin{picture}(7,37)
\put(-25,35){\circle*{1}}
\put(-30,30){\circle*{1}}
\put(-35,25){\circle*{1}} \put(-25,25){\circle*{1}}
\put(-40,20){\circle*{1}}\put(-35,20){\circle{1}}
\put(-30,20){\circle*{1}}
\put(-40,15){\circle*{1}}\put(-35,15){\circle*{1}}
\put(-30,15){\circle{1}}\put(-25,15){\circle*{1}}
\put(-45,10){\circle*{1}}\put(-35,10){\circle*{1}}
\put(-30,10){\circle*{1}}\put(-25,10){\circle*{1}}
\put(-50,5){\circle*{1}}\put(-40,5){\circle*{1}}
\put(-30,5){\circle*{1}}\put(-25,5){\circle*{1}}
\put(-20,5){\circle*{1}}

\put(-25,35){\line(-1,-1){5}}
\put(-30,30){\line(-1,-1){5}} \put(-30,30){\line(1,-1){5}}
\put(-35,25){\line(-1,-1){5}} \dottedline{0.5}(-35,25)(-35,20)
\put(-35,25){\line(1,-1){5}} \put(-25,25){\line(-1,-1){5}}
\put(-40,20){\line(0,-1){5}}\put(-40,20){\line(1,-1){5}}
\dottedline{0.5}(-40,15)(-35,20)\dottedline{0.5}(-30,15)(-35,20)
\put(-30,20){\line(-1,-1){5}}\dottedline{0.5}(-30,20)(-30,15)
\put(-30,20){\line(1,-1){5}}
\put(-40,15){\line(-1,-1){5}}\put(-40,15){\line(1,-1){5}}
\put(-35,15){\line(0,-1){5}}\put(-35,15){\line(1,-1){5}}
\dottedline{0.5}(-35,10)(-30,15)\dottedline{0.5}(-25,10)(-30,15)
\put(-25,15){\line(-1,-1){5}}\put(-25,15){\line(0,-1){5}}
\put(-45,10){\line(-1,-1){5}}\put(-45,10){\line(1,-1){5}}
\put(-35,10){\line(-1,-1){5}}\put(-35,10){\line(1,-1){5}}
\put(-30,10){\line(0,-1){5}}\put(-30,10){\line(1,-1){5}}
\put(-25,10){\line(-1,-1){5}}\put(-25,10){\line(1,-1){5}}

\put(-51,0){1}\put(-41,0){2}\put(-31,0){3}\put(-26,0){4}\put(-21,0){5}
\dottedline[$\cdot$]{1}(-21,17)(-21,8)(-49,8)(-49,17)(-21,17)

\thicklines
\put(-18,13){\vector(1,1){15}}
\thinlines


\put(5,25){\circle*{1}} \put(20,25){\circle*{1}}
\put(35,25){\circle*{1}} \put(50,25){\circle*{1}}

\put(5,35){\circle*{1}} \put(20,35){\circle*{1}}
\put(35,35){\circle*{1}} \put(50,35){\circle*{1}}

\put(5,25.1){\line(0,1){10}} \put(20,25.1){\line(0,1){10}}
\put(50,25.1){\line(0,1){10}}

\put(5,35.1){\line(3,-2){15}} \put(20,35.1){\line(3,-2){15}}
\put(35,35.1){\line(3,-2){15}} \put(35,35.1){\line(-3,-2){15}}
\put(50,35.1){\line(-3,-2){15}}

\dottedline[$\cdot$]{1}(13.5,40)(13.5,19)
\put(0,21.5){$e_1-e_3$} \put(15,21.5){$e_2-e_4$}
\put(30,21.5){$e_3-e_5$} \put(45,21.5){$e_3+e_5$}

\put(0,37){$e_1-e_4$} \put(15,37){$e_2-e_5$}
\put(30,37){$e_2+e_5$} \put(45,37){$e_3+e_4$}

\thicklines
\put(50,17){\vector(-1,-1){10}}
\thinlines


\put(5,2){\circle*{1}} \put(20,2){\circle*{1}}
\put(35,2){\circle*{1}}

\put(5,12){\circle*{1}}\put(20,12){\circle*{1}}
\put(35,12){\circle*{1}}

\put(5,2.1){\line(0,1){10}}\put(35,2.1){\line(0,1){10}}

\put(5,12.1){\line(3,-2){15}} \put(20,12.1){\line(3,-2){15}} \put(20,12.1){\line(-3,-2){15}} \put(35,12.1){\line(-3,-2){15}}

\end{picture}
\end{center}
\caption{Root poset of $D_5$, $G(\Phi_2,\Phi_3)$ and its central graph}
\label{f1}
\end{figure}
\end{example}

\subsection{Proof of Lemma \ref{eqgra1}}
We only need to consider the cases when the root system $\Phi$ is irreducible. Denote by $\Phi^{(i)}$ a central graph of $G(\Phi_i,\Phi_{i+1})$ and by $A(\Phi^{(i)})$ the matrix associated with $\Phi^{(i)}$. In view of Lemma \ref{eqgra2} and \ref{eqgra3}, it suffices to show that $A(\Phi^{(i)})$ has full rank for $i=1,\ldots,h-1$. Relying heavily on the pictures of the root posets in \cite{R}, we prove this in a case-by-case fashion.

(\rmnum{1}) The central graph $\Phi^{(i)}$ is empty or contains one point or contains two isolated points (including $A_n^{(i)}$, $B_n^{(i)}$, $C_n^{(i)}$, $G_2^{(i)}$ for all possible $i$; $D_n^{(i)}$, $i\geq n-1$ or $i$ is odd; $E_6^{(i)}$, $i\neq 2,3$; $E_7^{(i)}$, $i\neq 2,3,4,8$; $E_8^{(i)}$, $i\neq 2,3,4,5,8,9,14$; $F_4^{(i)}$, $i\neq 3$). There is nothing to prove since the matrix $A(\Phi^{(i)})$ has no column.

(\rmnum{2}) The central graph $\Phi^{(i)}$ is
\begin{figure}[H]
\setlength{\unitlength}{1.1mm}
\begin{center}
\begin{picture}(65,18.5)
\put(20,5){\circle*{1}}
\put(35,5){\circle*{1}} \put(50,5){\circle*{1}}

\put(20,15){\circle*{1}}
\put(35,15){\circle*{1}} \put(50,15){\circle*{1}}

\put(20,5.1){\line(0,1){10}}
\put(50,5.1){\line(0,1){10}}

\put(20,15.1){\line(3,-2){15}}
\put(35,15.1){\line(3,-2){15}} \put(35,15.1){\line(-3,-2){15}}
\put(50,15.1){\line(-3,-2){15}}

\put(19,1.2){$\beta_3$} \put(34,1.2){$\beta_2$}
\put(49,1.2){$\beta_1$}

\put(19,17.2){$\gamma_1$}
\put(34,17.2){$\gamma_2$} \put(49,17.2){$\gamma_3$}
\end{picture}
\end{center}
\end{figure}
(including $D_n^{(i)}$ for $i<n-1$ and $i$ is even; $E_6^{(2)}$; $E_7^{(i)}$, i=2,4,8; $E_8^{(i)}$, i=2,4,8,14; $F_4^{(3)}$). The associated matrix is
\begin{equation}\label{amatrix}
A(\Phi^{(i)})=\left(
                    \begin{array}{ccc}
                      0 & N_{\beta_1,-\gamma_2} & N_{\beta_1,-\gamma_3} \\
                      N_{\beta_2,-\gamma_1} & 0 & N_{\beta_2,-\gamma_3} \\
                      N_{\beta_3,-\gamma_1} & N_{\beta_3,-\gamma_2} & 0 \\
                    \end{array}
                  \right)
\end{equation}
Denote $\alpha_1:=\gamma_2-\beta_3$, $\alpha_2:=\gamma_3-\beta_1$ and $\alpha_3:=\gamma_1-\beta_2$. Then $\alpha_1$, $\alpha_2$ and $\alpha_3$ are simple roots. It is evident (by checking the pictures of the root posets of $D_n$, $E_6$, $E_7$, $E_8$ and $F_4$ in \cite{R}) that $\Phi^{(i)}$ is an induced subgraph of a ``cube'' (e.g., see $D_5^{(2)}$ in Figure \ref{f1}) in the root poset of $\Phi$. By symmetry of the cube (see \cite{R}) we also have $\alpha_1=\gamma_3-\beta_2$, $\alpha_2=\gamma_1-\beta_3$ and $\alpha_3=\gamma_2-\beta_1$. Moreover, $\beta_1-\alpha_1=\beta_2-\alpha_2=\beta_3-\alpha_3$ is a positive root, which we denote by $\theta$. The Jacobi identity implies
\[
[[E_{\theta}, E_{\alpha_1}], E_{-\gamma_2}]+[[E_{\alpha_1},E_{-\gamma_2}],E_{\theta}]+[[E_{-\gamma_2},E_{\theta}],E_{\alpha_1}]=0.
\]
It follows that
\begin{equation}\label{3.1.1}
N_{\theta,\alpha_1}N_{\beta_1,-\gamma_2}
+N_{\alpha_1,-\gamma_2}N_{\alpha_1-\gamma_2,\theta}
+N_{-\gamma_2,\theta}N_{-\gamma_2+\theta,\alpha_1}=0.
\end{equation}
If $\Phi^{(i)}\neq F_4^{(3)}$, then all roots in $\Phi$ have the same length. So $N_{\alpha,\beta}=0,\pm1$ for $\alpha,\beta\in\Phi$ by Theorem \ref{Chavelley basis}. Since $N_{\theta,\alpha_1}$, $N_{\beta_1,-\gamma_2}$, $N_{\alpha_1,-\gamma_2}$ and $N_{\alpha_1-\gamma_2,\theta}$ are nonzero by Theorem \ref{Chavelley basis}, we must have $N_{-\gamma_2,\theta}N_{-\gamma_2+\theta,\alpha_1}=0$ by $(\ref{3.1.1})$. We learn from Lemma \ref{Chavelley basis2} that $N_{\alpha_1,-\gamma_2}=-N_{\beta_3,-\gamma_2}$ and $N_{\alpha_1-\gamma_2,\theta}=N_{\theta,\alpha_3}$. This yields
\[
N_{\theta,\alpha_1}N_{\beta_1,-\gamma_2}
=N_{\theta,\alpha_3}N_{\beta_3,-\gamma_2}.
\]
Similarly, one has
\[
N_{\theta,\alpha_2}N_{\beta_2,-\gamma_3}
=N_{\theta,\alpha_1}N_{\beta_1,-\gamma_3}\quad \mbox{and}\quad N_{\theta,\alpha_3}N_{\beta_3,-\gamma_1}
=N_{\theta,\alpha_2}N_{\beta_2,-\gamma_1}.
\]
Therefore
\[
\begin{aligned}
\det (A(\Phi^{(i)}))=&N_{\beta_1,-\gamma_2}N_{\beta_2,-\gamma_3}N_{\beta_3,-\gamma_1}
+N_{\beta_1,-\gamma_3}N_{\beta_2,-\gamma_1}N_{\beta_3,-\gamma_2}\\
=&2N_{\beta_1,-\gamma_2}N_{\beta_2,-\gamma_3}N_{\beta_3,-\gamma_1}\neq0,
\end{aligned}
\]
that is, $A(\Phi^{(i)})$ has full rank. If $\Phi^{(i)}= F_4^{(3)}$, the positive roots of $F_4$ can be written as (\cite{K}, 2.88)
\begin{equation*}
\left\{
\begin{aligned}
&e_i\qquad\qquad i=1,2,3,4\\
&e_i\pm e_j\ \quad 1\leq i<j\leq4\\
&\frac{1}{2}(e_1\pm e_2\pm e_3\pm e_4),
\end{aligned}
\right.
\end{equation*}
where $\{e_1,e_2,e_3,e_4\}$ is an orthonormal basis of $\bbR^4$. The corresponding simple roots are $\frac{1}{2}(e_1-e_2-e_3-e_4)$, $e_4$,
$e_3-e_4$ and $e_2-e_3$ (represented by $1, 2, 3, 4$ respectively in the figure below).

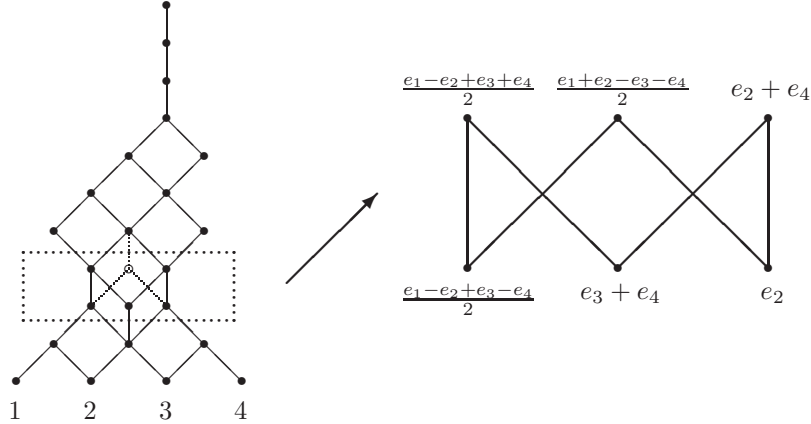
\begin{figure}[H]
\setlength{\unitlength}{1mm}
\begin{center}
\begin{picture}(5,53)
\put(-30,55){\circle*{1}}
\put(-30,50){\circle*{1}}
\put(-30,45){\circle*{1}}
\put(-30,40){\circle*{1}}
\put(-35,35){\circle*{1}}\put(-25,35){\circle*{1}}
\put(-40,30){\circle*{1}}\put(-30,30){\circle*{1}}
\put(-45,25){\circle*{1}} \put(-35,25){\circle*{1}}
\put(-25,25){\circle*{1}}
\put(-40,20){\circle*{1}}\put(-35,20){\circle{1}}
\put(-30,20){\circle*{1}}
\put(-40,15){\circle*{1}}\put(-35,15){\circle*{1}}
\put(-30,15){\circle*{1}}
\put(-45,10){\circle*{1}}\put(-35,10){\circle*{1}}
\put(-25,10){\circle*{1}}
\put(-50,5){\circle*{1}}\put(-40,5){\circle*{1}}
\put(-30,5){\circle*{1}}\put(-20,5){\circle*{1}}

\put(-30,55){\line(0,-1){5}}
\put(-30,50){\line(0,-1){5}}
\put(-30,45){\line(0,-1){5}}
\put(-30,40){\line(-1,-1){5}}\put(-30,40){\line(1,-1){5}}
\put(-35,35){\line(-1,-1){5}}\put(-35,35){\line(1,-1){5}}
\put(-25,35){\line(-1,-1){5}}
\put(-40,30){\line(-1,-1){5}}\put(-40,30){\line(1,-1){5}}
\put(-30,30){\line(-1,-1){5}} \put(-30,30){\line(1,-1){5}}
\put(-45,25){\line(1,-1){5}}
\put(-35,25){\line(-1,-1){5}} \dottedline{0.5}(-35,25)(-35,20)
\put(-35,25){\line(1,-1){5}} \put(-25,25){\line(-1,-1){5}}
\put(-40,20){\line(0,-1){5}}\put(-40,20){\line(1,-1){5}}
\dottedline{0.5}(-40,15)(-35,20)\dottedline{0.5}(-30,15)(-35,20)
\put(-30,20){\line(-1,-1){5}}\put(-30,20){\line(0,-1){5}}
\put(-40,15){\line(-1,-1){5}}\put(-40,15){\line(1,-1){5}}
\put(-35,15){\line(0,-1){5}}
\put(-30,15){\line(1,-1){5}}\put(-30,15){\line(-1,-1){5}}
\put(-45,10){\line(-1,-1){5}}\put(-45,10){\line(1,-1){5}}
\put(-35,10){\line(-1,-1){5}}\put(-35,10){\line(1,-1){5}}
\put(-25,10){\line(-1,-1){5}}\put(-25,10){\line(1,-1){5}}

\put(-51,0){1}\put(-41,0){2}\put(-31,0){3}\put(-21,0){4}
\dottedline[$\cdot$]{1}(-21,22)(-21,13)(-49,13)(-49,22)(-21,22)

\thicklines
\put(-14,18){\vector(1,1){12}}


\put(10,20){\circle*{1}}
\put(30,20){\circle*{1}} \put(50,20){\circle*{1}}

 \put(10,40){\circle*{1}}
\put(30,40){\circle*{1}} \put(50,40){\circle*{1}}

\put(10,40){\line(0,-1){20}}\put(10,40){\line(1,-1){20}}
\put(30,40){\line(-1,-1){20}}\put(30,40){\line(1,-1){20}}
\put(50,40){\line(-1,-1){20}}\put(50,40){\line(0,-1){20}}

\put(1,15){$\frac{e_1-e_2+e_3-e_4}{2}$}
\put(25,16){$e_3+e_4$} \put(49,16){$e_2$}

\put(1,43){$\frac{e_1-e_2+e_3+e_4}{2}$}
\put(21.5,43){$\frac{e_1+e_2-e_3-e_4}{2}$} \put(45,43){$e_2+e_4$}

\end{picture}
\end{center}
\caption{The root poset of $F_4$ and the central graph $F_4^{(3)}$}
\end{figure}

In $F_4^{(3)}$, we have $\beta_1=e_2$, $\beta_2=e_3+e_4$ and $\beta_3=\frac{1}{2}(e_1-e_2+e_3-e_4)$. Moreover, $\gamma_1=\frac{1}{2}(e_1-e_2+e_3+e_4)$, $\gamma_2=\frac{1}{2}(e_1+e_2-e_3-e_4)$ and $\gamma_3=e_2+e_4$. It follows from Theorem \ref{Chavelley basis} and (\ref{amatrix}) that
\[
\det (A(F_4^{(3)}))\equiv\left|
                         \begin{array}{ccc}
                           0 & 1 & 1 \\
                           1 & 0 & 1 \\
                           1 & 2 & 0 \\
                         \end{array}
                       \right|\equiv1\neq0\ (\mathrm{mod} 2).
\]
In other words, the matrix $A(F_4^{(3)})$ has full rank.

(\rmnum{3}) The central graph $\Phi^{(i)}$ is
\begin{figure}[H]
\setlength{\unitlength}{1.1mm}
\begin{center}
\begin{picture}(65,18.5)
\put(5,5){\circle*{1}} \put(20,5){\circle*{1}}
\put(35,5){\circle*{1}} \put(50,5){\circle*{1}}
\put(65,5){\circle*{1}}

\put(5,15){\circle*{1}} \put(20,15){\circle*{1}}
\put(35,15){\circle*{1}} \put(50,15){\circle*{1}}
\put(65,15){\circle*{1}}

\put(5,5.1){\line(0,1){10}} \put(35,5.1){\line(0,1){10}}
\put(65,5.1){\line(0,1){10}}

\put(5,15.1){\line(3,-2){15}} \put(20,15.1){\line(3,-2){15}}
\put(35,15.1){\line(3,-2){15}} \put(50,15.1){\line(3,-2){15}}

\put(20,15.1){\line(-3,-2){15}} \put(35,15.1){\line(-3,-2){15}}
\put(50,15.1){\line(-3,-2){15}} \put(65,15.1){\line(-3,-2){15}}

\put(4,1.2){$\beta_1$} \put(19,1.2){$\beta_2$}
\put(34,1.2){$\beta_3$} \put(49,1.2){$\beta_4$}
\put(64,1.2){$\beta_5$}

\put(4,17.2){$\gamma_1$} \put(19,17.2){$\gamma_2$}
\put(34,17.2){$\gamma_3$} \put(49,17.2){$\gamma_4$}
\put(64,17.2){$\gamma_5$}
\end{picture}
\end{center}
\end{figure}
(including $E_6^{(3)}$; $E_7^{(3)}$; $E_8^{(3)}$). Then $N_{\beta_i,-\gamma_j}=0,\pm1$ for $1\leq i, j\leq 5$. We get
\[
\det (A(\Phi^{(i)}))\equiv\left|
                          \begin{array}{ccccc}
                            1 & 1 & 0 & 0 & 0 \\
                            1 & 0 & 1 & 0 & 0 \\
                            0 & 1 & 1 & 1 & 0 \\
                            0 & 0 & 1 & 0 & 1 \\
                            0 & 0 & 0 & 1 & 1 \\
                          \end{array}
                        \right|
\equiv1\neq0\ (\mathrm{mod} 2).
\]

(\rmnum{4}) The central graph $\Phi^{(i)}$ is
\begin{figure}[H]
\setlength{\unitlength}{1.1mm}
\begin{center}
\begin{picture}(65,18.5)
\put(5,5){\circle*{1}} \put(20,5){\circle*{1}}
\put(35,5){\circle*{1}} \put(50,5){\circle*{1}}
\put(65,5){\circle*{1}}

\put(5,15){\circle*{1}} \put(20,15){\circle*{1}}
\put(35,15){\circle*{1}} \put(50,15){\circle*{1}}
\put(65,15){\circle*{1}}

\put(5,5.1){\line(0,1){10}} \put(65,5.1){\line(0,1){10}}

\put(5,15.1){\line(3,-2){15}} \put(20,15.1){\line(3,-2){15}}
\put(35,15.1){\line(3,-2){15}} \put(50,15.1){\line(3,-2){15}}

\put(20,15.1){\line(-3,-2){15}} \put(35,15.1){\line(-3,-2){15}}
\put(50,15.1){\line(-3,-2){15}} \put(50,15.1){\line(-3,-1){30}}
\put(65,15.1){\line(-3,-2){15}}

\put(4,1.2){$\beta_1$} \put(19,1.2){$\beta_2$}
\put(34,1.2){$\beta_3$} \put(49,1.2){$\beta_4$}
\put(64,1.2){$\beta_5$}

\put(4,17.2){$\gamma_1$} \put(19,17.2){$\gamma_2$}
\put(34,17.2){$\gamma_3$} \put(49,17.2){$\gamma_4$}
\put(64,17.2){$\gamma_5$}
\end{picture}
\end{center}
\end{figure}
(including $E_8^{(9)}$). Then
\[
\det A((\Phi^{(i)}))\equiv\left|
                          \begin{array}{ccccc}
                            1 & 1 & 0 & 0 & 0 \\
                            1 & 0 & 1 & 1 & 0 \\
                            0 & 1 & 0 & 1 & 0 \\
                            0 & 0 & 1 & 0 & 1 \\
                            0 & 0 & 0 & 1 & 1 \\
                          \end{array}
                        \right|
\equiv1\neq0\ (\mathrm{mod} 2).
\]

(\rmnum{5}) The central graph $\Phi^{(i)}$ is
\begin{figure}[H]
\setlength{\unitlength}{1.1mm}
\begin{center}
\begin{picture}(95,18.5)
\put(5,5){\circle*{1}} \put(20,5){\circle*{1}}
\put(35,5){\circle*{1}} \put(50,5){\circle*{1}}
\put(65,5){\circle*{1}} \put(80,5){\circle*{1}}
\put(95,5){\circle*{1}}

\put(5,15){\circle*{1}} \put(20,15){\circle*{1}}
\put(35,15){\circle*{1}} \put(50,15){\circle*{1}}
\put(65,15){\circle*{1}} \put(80,15){\circle*{1}}
\put(95,15){\circle*{1}}

\put(5,5.1){\line(0,1){10}} \put(20,5.1){\line(0,1){10}}
\put(50,5.1){\line(0,1){10}} \put(65,5.1){\line(0,1){10}}
\put(95,5.1){\line(0,1){10}}

\put(5,15.1){\line(3,-1){30}} \put(20,15.1){\line(3,-1){30}}
\put(35,15.1){\line(3,-1){30}} \put(65,15.1){\line(3,-2){15}}
\put(80,15.1){\line(3,-2){15}}

\put(20,15.1){\line(-3,-2){15}} \put(35,15.1){\line(-3,-2){15}}
\put(50,15.1){\line(-3,-2){15}} \put(65,15.1){\line(-3,-2){15}}
\put(80,15.1){\line(-3,-2){15}} \put(95,15.1){\line(-3,-2){15}}

\put(4,1.2){$\beta_1$} \put(19,1.2){$\beta_2$}
\put(34,1.2){$\beta_3$} \put(49,1.2){$\beta_4$}
\put(64,1.2){$\beta_5$} \put(79,1.2){$\beta_6$}
\put(94,1.2){$\beta_7$}

\put(4,17.2){$\gamma_1$} \put(19,17.2){$\gamma_2$}
\put(34,17.2){$\gamma_3$} \put(49,17.2){$\gamma_4$}
\put(64,17.2){$\gamma_5$} \put(79,17.2){$\gamma_6$}
\put(94,17.2){$\gamma_7$}
\end{picture}
\end{center}
\end{figure}
(including $E_8^{(5)}$). Then
\[
\det A((\Phi^{(i)}))\equiv\left|
                          \begin{array}{ccccccc}
                            1 & 1 & 0 & 0 & 0 & 0 & 0 \\
                            0 & 1 & 1 & 0 & 0 & 0 & 0 \\
                            1 & 0 & 0 & 1 & 0 & 0 & 0 \\
                            0 & 1 & 0 & 1 & 1 & 0 & 0 \\
                            0 & 0 & 1 & 0 & 1 & 1 & 0 \\
                            0 & 0 & 0 & 0 & 1 & 0 & 1 \\
                            0 & 0 & 0 & 0 & 0 & 1 & 1 \\
                          \end{array}
                        \right|
\equiv1\neq0\ (\mathrm{mod} 2).
\]
%
%

%
%


\end{document}